\theoremstyle{plain}
\newtheorem{theorem}{Theorem}[section]
\newtheorem{proposition}[theorem]{Proposition}
\newtheorem{lemma}[theorem]{Lemma}
\newtheorem{corollary}[theorem]{Corollary}
\theoremstyle{definition}
\newtheorem{definition}[theorem]{Definition}
\newtheorem{remark}[theorem]{Remark}
\newtheorem*{remark*}{Remark}
\newtheorem{fact}[theorem]{Fact}
\newtheorem{example}[theorem]{Example}
\newtheorem{notation}[theorem]{Notation}
\DeclareMathOperator{\Mod}{Mod}
\DeclareMathOperator{\acl}{acl}
\DeclareMathOperator{\dcl}{dcl}
\DeclareMathOperator{\Aut}{Aut}
\DeclareMathOperator{\tp}{tp}
\def\dotminussym#1#2{%
  \setbox0=\hbox{$\m@th#1-$}%
  \kern.5\wd0%
  \hbox to 0pt{\hss\hbox{$\m@th#1-$}\hss}%
  \raise.3\ht0\hbox to 0pt{\hss$\m@th#1\cdot$\hss}%
  \kern.5\wd0}
\newcommand{\dotminus}{\mathbin{\mathpalette\dotminussym{}}}
\def\indsym#1#2{%
 \setbox0=\hbox{$\m@th#1x$}%
 \kern\wd0%
 \hbox to 0pt{\hss$\m@th#1\mid$\hbox to 0pt{$\m@th#1^{#2}$\hss}\hss}%
 \lower.9\ht0\hbox to 0pt{\hss$\m@th#1\smile$\hss}%
 \kern\wd0}
\newcommand{\ind}[1][]{\mathop{\mathpalette\indsym{#1}}}
\def\nindsym#1#2{%
 \setbox0=\hbox{$\m@th#1x$}%
 \kern\wd0%
 \hbox to 0pt{\hss$\m@th#1\not$\kern1.4\wd0\hss}
 \hbox to 0pt{\hss$\m@th#1\mid$\hbox to 0pt{$\m@th#1^{#2}$\hss}\hss}%
 \lower.9\ht0\hbox to 0pt{\hss$\m@th#1\smile$\hss}%
 \kern\wd0}
\renewcommand{\phi}{\varphi}
\newcommand {\N}{\mathbb{N}}   % natural numbers
\newcommand {\R}{\mathbb{R}}   % reals  
\newcommand{\U}{\mathbb{U}}
\newcommand{\B}{\mathcal{B}}
\newcommand{\M}{\mathcal{M}}
\newcommand{\n}{\mathcal{N}}     %note that \N is used for a different symbol
\newcommand{\W}{\mathcal{W}}
\DeclareMathOperator{\dist}{dist}		     %dist
\newcommand{\RT}{\mathbb{R}\mathrm{T}}  %name of a theory
\newcommand{\rbRT}{\mathrm{r}\mathrm{b}\mathbb{R}\mathrm{T}}  %name of another theory
\newcommand{\HM}{\mathrm{H}\mathrm{M}} %notation for the set of 0-hyperbolic metrics
\title[model theory of $\R$-trees]% short form of title for running heads
{Model theory of $\R$-trees}
\author[S.~Carlisle]% short form of name for running heads
{Sylvia Carlisle}
\address{Sylvia Carlisle \newline
Rose-Hulman Institute of Technology \newline
Terre Haute, Indiana 47803, USA}
\email{carlisle@rose-hulman.edu}
\author[C.~W.~Henson]{C.~Ward Henson}
\address{C.~Ward Henson \newline
University of Illinois at Urbana-Champaign \newline
Urbana, Illinois 61801, USA}
\urladdr{https://faculty.math.illinois.edu/\textasciitilde henson}
\thanks{Research supported by Simons Foundation grants (\#202251 and \#422088) to the second author.} 
\thanks{Revised February 2, 2021.}  %NEEDS TO BE CHANGED BY HAND}
\begin{document}

\begin{abstract}
We show the theory of pointed $\R$-trees with radius at most $r$ is axiomatizable in a 
suitable continuous signature.  We identify the model companion $\rbRT_r$ of this theory 
and study its properties.  In particular, the model companion is complete and has quantifier 
elimination; it is stable but not superstable.  We identify its independence relation and find 
built-in canonical bases for non-algebraic types. Among the models of $\rbRT_r$ are 
$\R$-trees that arise naturally in geometric group theory.  In every infinite cardinal, we 
construct the maximum possible number of pairwise non-isomorphic models of $\rbRT_r$; 
indeed, the models we construct are pairwise non-homeomorphic.  We give detailed 
information about the type spaces of $\rbRT_r$.  Among other things, we show that the 
space of $2$-types over the empty set is nonseparable.  Also, we characterize the principal 
types of finite tuples (over the empty set) and use this information to conclude that 
$\rbRT_r$ has no atomic model.
\end{abstract}

\maketitle

% bddRtrees23-01-123intro.tex
% introduction and background sections

% last edited 1-23-2021 by cwh

% this is a final version for arxiv.org, implementing most of the revisions that were
% made in the JLA publication process.

% !TEX root = ./bddRtrees23-master.tex

\section{Introduction}

Continuous logic is an extension of classical first order logic used to study the model theory of 
structures based on metric spaces.  In this paper, we use continuous logic as presented in 
Ben Yaacov, Berenstein, Henson and Usvyatsov \cite{N} and 
Ben Yaacov and Usvyatsov \cite{BU} to study the model theory of $\R$-trees.  

An $\R$-tree is a metric space $T$ such that for any two points $a,b \in T$ there is a unique 
arc in $T$ from $a$ to $b$, and that arc is a geodesic segment ({ie}, an isometric copy of some closed interval in 
$\R$).  These spaces arise naturally in geometric group theory, for example: the 
asymptotic cone of a hyperbolic finitely generated group is an $\R$-tree.

An $\R$-tree may be unbounded, while the existing full treatments of continuous model theory 
are restricted to bounded structures. With this in mind, we consider pointed trees, choose a
real number $r>0$, and axiomatize the theory $\RT_r$ of pointed $\R$-trees of
radius at most $r$ in a suitable continuous signature. 

We then define the notion of \textit{richly branching} and axiomatize the theory
$\rbRT_r$ of the class of richly branching pointed $\R$-trees with radius $r$.  We prove that the 
models of $\rbRT_r$ are exactly the existentially closed models of $\RT_r$; thus $\rbRT_r$ is 
the model companion of $\RT_r$.  Next, we investigate some model theoretic properties of 
$\rbRT_r$, showing that it is complete and has quantifier elimination.  In 
particular, that means $\rbRT_r$ is the model completion
of $\RT_r$.  Further, we prove that $\rbRT_r$ is stable but not superstable and identify its 
model-theoretic independence relation. We characterize the principal types of $\rbRT_r$, 
and show that this theory has no atomic model. Finally, we show that $\rbRT_r$ is highly 
non-categorical.  In fact, for any density character this theory has the maximum possible 
number of pairwise non-isomorphic models; indeed, the models we construct are pairwise 
non-homeomorphic. We also give examples of richly branching $\R$-trees which come from 
the literature, including some that will be familiar to geometric group theorists.

In the remainder of this introduction we detail the contents of each section of this paper:

In Sections 2 and 3 we provide background concerning $\R$-trees and continuous logic, respectively.  
In Section 4 we specify a continuous signature $L$ suitable for the class of pointed $\R$-trees of radius 
at most $r$, and axiomatize this class of $L$-structures; the theory of the class is denoted $\RT_r$.

In Section 5 we discuss definability of certain sets and functions in $\RT_r$.  In Section 6 we show 
$\RT_r$ has amalgamation over substructures.  This plays an important role in many of the primary results in this paper.

In Section 7 we introduce the class of richly branching pointed $\R$-trees with radius $r$ and 
axiomatize this class.  The associated theory is denoted $\rbRT_r$.  We then show that $\rbRT_r$ 
is the model companion of $\RT_r$.  The theory $\rbRT_r$ is the main object of study in this paper.

In Sections 8 and 9 we verify the main model-theoretic properties of $\rbRT_r$.  We show that 
this theory is complete and admits quantifier elimination.  We characterize its types over sets of 
parameters and use this to show $\rbRT_r$ is $\kappa$-stable if and only if $\kappa = \kappa^\omega$; 
hence this theory is strictly stable (stable but not superstable).  We also show that $\rbRT_r$ is not a 
small theory; indeed, its space of $2$-types over $\emptyset$ has metric density character $2^\omega$.  
(The space of $1$-types over $\emptyset$ is isometric to the real interval $[0,r]$ with the usual 
absolute value metric.)  We give a simple geometric characterization of the independence relation of 
$\rbRT_r$.  Finally, we show that non-algebraic types have built-in canonical bases ({ie}, these 
bases are sets of ordinary elements in models of $\rbRT_r$ and do not require the introduction of  imaginaries).

In Section 10 we discuss some models of $\rbRT_r$ that have been constructed within the theory of $\R$-trees 
by Dyubina and Polterovich \cite{DP1,DP2} and some other models that arise in geometric group theory.

In Section 11  we use amalgamation constructions to build large families of models of $\rbRT_r$ 
and to characterize its isolated types over $\emptyset$.  
For each infinite cardinal $\kappa$, we show there are  $2^\kappa$-many pairwise non-isomorphic 
models of $\rbRT_r$ of density character $\kappa$.  This is the maximum possible number of models, 
and the models we construct are, in fact, pairwise non-homeomorphic. We show that $\rbRT_r$ 
has very few isolated $n$-types over $\emptyset$ and conclude that it has no atomic model 
(equivalently, it has no prime model). 

In Section 12 we briefly discuss how the results in this paper could be obtained
for the full class of pointed $\R$-trees ({ie}, without imposing a boundedness requirement).

\section{\texorpdfstring{$\R$}{R}-trees}
In this section we give some background concerning $\R$-trees.

\begin{definition}
An \emph{arc} in a metric space $M$ is the image of a homeomorphism $\gamma$ from an
interval $[0,r] \subseteq \R$ into $M$  for some $r \geq 0$.
A \emph{geodesic segment} in a metric space $M$ is the image of an
isometric embedding $\gamma \colon [0,r] \to M$.
We say that such an arc or geodesic segment is \emph{from $\gamma(0)$ to $\gamma(r)$}.  
A metric space $M$ is called \emph{geodesic} if for every $a,b \in M$ there is at least one
geodesic segment in $M$ from $a$ to $b$.
\end{definition}

\begin{fact}
\label{midptfact}
A complete metric space $M$ is a geodesic space if and only if for any $x,y\in M$ there
exists a midpoint $z$ between $x$ and $y$. That is, there exists $z$ such that:
\[
d(x,z)=\frac{d(x,y)}{2}, \text{ and } d(y,z)=\frac{d(x,y)}{2} \text{.}
\]
\end{fact}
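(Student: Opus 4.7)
The forward direction is essentially automatic: if $M$ is geodesic and $x,y \in M$, let $\gamma\colon[0,d(x,y)] \to M$ be a geodesic segment from $x$ to $y$ and take $z = \gamma(d(x,y)/2)$. The midpoint conditions follow directly from $\gamma$ being an isometric embedding, and completeness plays no role here.

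For the reverse direction, I would fix $x, y \in M$, put $d = d(x,y)$, and build the isometric embedding $\gamma\colon[0,d] \to M$ in two stages. First, define $\gamma$ on the set $D = \{kd/2^n : n \geq 0,\ 0 \leq k \leq 2^n\}$ of scaled dyadic rationals by induction on $n$: at stage $0$ set $\gamma(0)=x$ and $\gamma(d)=y$; at stage $n{+}1$, for each pair of consecutive dyadic points $s = kd/2^n$ and $t = (k{+}1)d/2^n$ already defined, let $\gamma((s+t)/2)$ be a midpoint of $\gamma(s)$ and $\gamma(t)$, which exists by hypothesis.

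The key lemma to prove by induction on $n$ is that the restriction of $\gamma$ to $D_n = \{kd/2^n : 0 \leq k \leq 2^n\}$ is an isometry. The efficient way is to show only the weaker statement that $d(\gamma(s_i), \gamma(s_{i+1})) = d/2^n$ for consecutive points $s_i < s_{i+1}$ of $D_n$; this holds at stage $n{+}1$ directly from the midpoint defining property. The full isometry statement then follows from a telescoping argument: the triangle inequality gives $d(\gamma(s_i), \gamma(s_j)) \leq (j-i)d/2^n$, and summing these bounds across all $2^n$ consecutive pairs yields $d(x,y) \leq d$; since equality must hold overall, every intermediate inequality is forced to be an equality. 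This rigidity argument is the one real content of the proof.

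Finally, since $D$ is dense in $[0,d]$ and $\gamma\colon D\to M$ is an isometry (hence uniformly continuous) into the complete space $M$, $\gamma$ extends uniquely to a continuous map $\bar\gamma\colon [0,d] \to M$, and a standard limit argument shows $\bar\gamma$ remains an isometry. Thus $\bar\gamma$ is a geodesic segment from $x$ to $y$, establishing that $M$ is geodesic. I expect the main obstacle, mild as it is, to be the telescoping/rigidity step that upgrades the consecutive-distance equalities into a full isometry on $D_n$; everything else is bookkeeping.
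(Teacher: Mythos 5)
Your proof is correct and is the standard dyadic-midpoint argument; the paper itself does not prove this fact but simply cites \cite[Chapter I, 1.4]{BH}, where essentially this same construction appears. The one step you flag as the real content --- upgrading the consecutive-distance equalities $d(\gamma(s_i),\gamma(s_{i+1}))=d/2^n$ to a full isometry on $D_n$ via the sandwich $d \leq d(\gamma(s_0),\gamma(s_i)) + d(\gamma(s_i),\gamma(s_j)) + d(\gamma(s_j),\gamma(s_{2^n})) \leq \sum_k d(\gamma(s_k),\gamma(s_{k+1})) = d$ --- is exactly right, and the completion/extension step is routine as you say.
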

\begin{proof}
See Bridson and Haefliger \cite[I.1.4, page 4]{BH}.
\end{proof}

\begin{definition}
An {\it $\R$-tree} is a metric space $M$ such that for any two
points $a,b \in M$ there is a unique arc from $a$ to $b$, and that arc is a geodesic
segment.
\end{definition}

In an $\R$-tree, $[a,b]$ denotes the unique geodesic segment from $a$ to $b$.
Since metric structures are required to be based on complete metric spaces,
it is a helpful fact that the completion of an $\R$-tree is an $\R$-tree
(see Chiswell \cite[Lemma 2.4.14]{C}).

Let $M$ be an $\R$-tree and $a\in M$.  Call the connected components of
$M\setminus\{a\}$ {\it branches} at $a$.  Let the \emph{degree} of a point
$a\in M$ be the cardinal number of branches at $a$.  If there are three or more
branches at $a\in M$, then we call $a$ a \emph{branch point}.
The {\it height} of a branch $\beta$ at $a$ is $\sup\{d(a,x)|x\in \beta\}$ if that
supremum exists, and is $\infty$ otherwise.  A \emph{subtree} of $M$ is any
subspace of $M$ that is itself an $\R$-tree.
A \emph{ray} in an $\R$-tree is an isometric copy of $\R^{\geq 0}$.  If $a\in M$,
a \emph{ray at $a$} is a ray so that the image of $0$ under the isometric embedding of $\R^{\geq0}$
into $M$ is $a$.

The following lemmas and definitions collect some straightforward facts about $\R$-trees
used in this paper. For helpful pictures and more facts about $\R$-trees see \cite{C}.

\begin{lemma}
\label{branch}
If $M$ is an $\R$-tree and $a,b,c \in M$, then
\begin{enumerate}
\item $d(a,b)+d(b,c)=d(a,c)+2 \dist(b,[a,c]).$
\item $b\in [a,c]$ if and only if $d(a,c)=d(a,b)+d(b,c)$.
\item For $b$ distinct from $a$ and $c$, we have $b\in [a,c]$ if and only if $a$ and $c$ are in different branches at $b$.
\end{enumerate}
\end{lemma}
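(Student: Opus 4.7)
The plan is to first construct a \emph{median} point for any three points and read all three parts off from it; the main obstacle is this construction, where the uniqueness of arcs must be applied with care. Given $a,b,c \in M$, I aim to produce a point $m$ lying on each of $[a,b]$, $[a,c]$, and $[b,c]$ and satisfying $d(b,m) = \dist(b,[a,c])$. Once $m$ is in hand, the additivity relations $d(a,b) = d(a,m) + d(m,b)$, $d(a,c) = d(a,m) + d(m,c)$, and $d(b,c) = d(b,m) + d(m,c)$ combine immediately into $d(a,b) + d(b,c) = d(a,c) + 2 d(m,b) = d(a,c) + 2\dist(b,[a,c])$, proving (1).

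To build $m$: the intersection $[a,b] \cap [a,c]$ is an initial segment of both geodesics, since for any $y$ in the intersection the sub-arc of $[a,b]$ from $a$ to $y$ and the sub-arc of $[a,c]$ from $a$ to $y$ are both arcs from $a$ to $y$ and hence coincide by uniqueness. Let $m$ be the farthest point of this intersection from $a$, which exists by compactness; then $[a,b] \cap [a,c] = [a,m]$. Any common point of $[m,b]$ and $[m,c]$ lies in $[a,b] \cap [a,c] = [a,m]$ and also in $[m,b]$, and since $[a,m] \cap [m,b] = \{m\}$ as successive subsegments of the geodesic $[a,b]$, we conclude $[m,b] \cap [m,c] = \{m\}$. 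Hence $[b,m] \cup [m,c]$ is an arc from $b$ to $c$, which by uniqueness equals $[b,c]$; in particular $m \in [b,c]$. Finally, any $x \in [a,c]$ lies in $[a,m] \subset [a,b]$ or in $[m,c] \subset [b,c]$, and in either case the corresponding collinearity relation gives $d(x,b) \geq d(m,b)$, so $d(m,b) = \dist(b,[a,c])$.

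Part (2) follows immediately: $[a,c]$ is closed in $M$ (being the isometric image of a compact interval), so by (1) the equation $d(a,c) = d(a,b) + d(b,c)$ holds iff $\dist(b,[a,c]) = 0$ iff $b \in [a,c]$. For (3), with $b \ne a,c$, define the relation $x \sim y$ on $M \setminus \{b\}$ by $b \notin [x,y]$. This is transitive via the median of any triple $x,y,z$: if $b \in [x,z] = [x,m'] \cup [m',z]$, then $b \in [x,m'] \subset [x,y]$ or $b \in [m',z] \subset [y,z]$. Each $\sim$-class is open by (2): $b \notin [x,y]$ gives $d(x,y) < d(x,b) + d(b,y)$, and this strict inequality persists for $y$ varying in a small enough neighborhood. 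Each class is also connected because any two of its points are joined by a geodesic avoiding $b$. Hence the $\sim$-classes are exactly the connected components of $M \setminus \{b\}$, that is, the branches at $b$, and (3) follows.
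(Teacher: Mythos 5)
Your proof is correct. Note, though, that the paper does not actually prove this lemma: it simply cites \cite[Lemmas 2.1.2, 1.2.2, 2.2.2]{C}, so any self-contained argument is automatically a different route from the paper's. What you have done is essentially reconstruct the standard median-point argument: you show from uniqueness of arcs that $[a,b]\cap[a,c]$ is a closed initial segment $[a,m]$ of both geodesics, that $[b,m]$ and $[m,c]$ meet only in $m$ so their union is the (unique) arc $[b,c]$, and that $m$ realizes $\dist(b,[a,c])$ because every point of $[a,c]=[a,m]\cup[m,c]$ is collinear with $m$ and $b$ on one of the two geodesics through $b$. This $m$ is exactly the point $Y(a,b,c)$ that the paper only introduces later (Definition \ref{Ypoint}, again with a citation), so your proof has the additional benefit of establishing that definition's content from first principles; parts (1) and (2) then fall out by arithmetic and closedness of $[a,c]$. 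Your treatment of (3) via the relation $x\sim y\iff b\notin[x,y]$ is also sound: transitivity via the median of $x,y,z$, openness of classes via the strict inequality in (2), and connectedness via geodesics that stay inside the class together force the $\sim$-classes to coincide with the connected components of $M\setminus\{b\}$ (an open connected partition of a space is its component decomposition). The only stylistic remark is that in the connectedness step one should note, as you implicitly do, that the geodesic $[y,y']$ joining two points of a class lies entirely in that class, not merely in $M\setminus\{b\}$; this is immediate since every initial subsegment of $[y,y']$ avoids $b$. In short: the paper buys brevity by outsourcing to \cite{C}; you buy self-containment at the cost of about a page, and incidentally justify Definition \ref{Ypoint} along the way.
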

\begin{proof}
Statement (1) follows from \cite[Lemma 2.1.2]{C},
(2) is proved in \cite[Lemma 1.2.2]{C} and
(3) comes from \cite[Lemma 2.2.2]{C}.
\end{proof}

\begin{lemma}
\label{!geo}
If $M$ is an $\R$-tree and $E_1$, $E_2$ are disjoint, closed, non-empty subtrees of $M$,
then there exists a unique shortest geodesic segment $[u, v]$ with $u\in E_1$ and $v\in E_2$.  
Moreover, for all $b\in E_1$ and $c\in E_2$, the geodesic segment from $b$ to $c$ must contain $[u, v]$.
\end{lemma}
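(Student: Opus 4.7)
The plan is to construct the candidate shortest segment $[u, v]$ inside a single fixed cross-geodesic $[b_0, c_0]$, and then to show that this $[u, v]$ is contained in every geodesic from a point of $E_1$ to a point of $E_2$; the minimum distance, uniqueness, and the moreover clause will all follow easily from this containment. I would begin by picking any $b_0 \in E_1$ and $c_0 \in E_2$ and parametrizing $[b_0, c_0]$ isometrically by $\gamma \colon [0, d(b_0, c_0)] \to M$ with $\gamma(0) = b_0$. Because $E_1$ is a closed subtree containing $b_0$ but not $c_0$, the intersection $E_1 \cap [b_0, c_0]$ is a geodesically convex closed subset of the interval $[b_0, c_0]$ containing $b_0$, hence equals $[b_0, u]$ where $u = \gamma(t_1)$ and $t_1 = \sup\{t : \gamma(t) \in E_1\}$. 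Similarly $E_2 \cap [b_0, c_0] = [v, c_0]$, where $v = \gamma(t_2)$ and $t_2 = \inf\{t \geq t_1 : \gamma(t) \in E_2\}$. Disjointness of $E_1$ and $E_2$ forces $t_1 < t_2$, so $[u, v]$ is a nontrivial subsegment whose interior avoids $E_1 \cup E_2$.

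The main step will be to prove that $[u, v] \subseteq [b, c]$ for every $b \in E_1$ and $c \in E_2$. For this I would use the standard median property of $\R$-trees (a consequence of Lemma \ref{branch}; see \cite{C}): any three points $p, q, r$ admit a unique common point $m \in [p, q] \cap [p, r] \cap [q, r]$, and each of the three pairwise segments decomposes as the concatenation of its two sub-segments through $m$. Apply this first to $b, b_0, c_0$: the median $m_1$ lies in $[b, b_0] \subseteq E_1$ and in $[b_0, c_0]$, so $m_1 \in E_1 \cap [b_0, c_0] = [b_0, u]$. Hence $[b, c_0] \supseteq [m_1, c_0] \supseteq [u, c_0] \supseteq [u, v]$, and the same reasoning identifies $E_1 \cap [b, c_0] = [b, u]$ and $E_2 \cap [b, c_0] = [v, c_0]$. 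Apply the median property next to $b, c_0, c$: the median $m_2$ lies in $[c_0, c] \subseteq E_2$ and in $[b, c_0]$, so $m_2 \in E_2 \cap [b, c_0] = [v, c_0]$. Then $[b, c] \supseteq [b, m_2] \supseteq [b, v] \supseteq [u, v]$.

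Once this containment is in hand, Lemma \ref{branch}(2) gives $d(b, c) = d(b, u) + d(u, v) + d(v, c) \geq d(u, v)$ for all admissible $b, c$, with equality at $b = u$, $c = v$. So $[u, v]$ realizes $\dist(E_1, E_2)$ and lies in every cross-geodesic. For uniqueness, any other minimizing segment $[u', v']$ with $u' \in E_1$, $v' \in E_2$ would satisfy $[u, v] \subseteq [u', v']$ by the same argument, and equal length then forces $u = u'$ and $v = v'$. The main obstacle, and where I expect the real work to sit, is the careful bookkeeping in the middle paragraph: verifying that both medians $m_1$ and $m_2$ actually land in the predicted $E_i$-portions of their respective segments, so that the ``$E_1$-exit'' and ``$E_2$-entry'' points on every cross-geodesic $[b, c]$ are pinned to the fixed $u$ and $v$ determined by $[b_0, c_0]$.
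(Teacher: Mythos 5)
Your proof is correct and complete. Note, though, that the paper itself does not prove Lemma \ref{!geo}; it simply cites \cite[Lemma 2.1.9]{C}, so there is no in-paper argument to compare against. Your self-contained argument is the standard one and all the steps check out: the key implicit fact, that a subtree of an $\R$-tree is geodesically convex (because the geodesic in the subtree between two of its points is, by uniqueness of arcs in $M$, the geodesic in $M$), makes $E_i \cap [b_0,c_0]$ a closed subinterval anchored at the appropriate endpoint; closedness of the $E_i$ ensures $u,v$ are attained; disjointness gives $t_1 < t_2$; and the two applications of the median point $Y(\cdot,\cdot,\cdot)$ (Definition \ref{Ypoint}) correctly pin the exit and entry points of every cross-geodesic $[b,c]$ to $u$ and $v$, after which minimality and uniqueness follow from $d(b,c)=d(b,u)+d(u,v)+d(v,c)$ via Lemma \ref{branch}(2).
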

\begin{proof}
This is \cite[Lemma 2.1.9]{C}.
\end{proof}

The preceding lemma directly implies the following fact, used often in this paper:
Given an $\R$-tree $M$, a closed subtree $E$ and a point $a\in M$,
there is a unique point $e$ in $E$ closest to $a$.  In that situation, one has that $\dist(a, E)=d(a, e)$
and also that $e$ is on the segment $[a,b]$ for every point $b \in E$.  Equivalently, given any $b \in E$,
if $\gamma \colon [0,d(a,b)] \to M$ is an isometry and has the geodesic segment $[a,b]$ as its image, 
then $e = \gamma(s)$, where $s = d(a,e)$ is the least value of $t \in [0,d(a,b)]$ for which $\gamma(t) \in E$.

\begin{definition}
\label{piecewise segment}
Let $x_0, x_1, x_2, \dots, x_n$ be points in an $\R$-tree $M$ and
$\gamma \colon [0, d(x_0, x_n)] \rightarrow M$  the isometric embedding
with $\gamma(0)=x_0$ and $\gamma(d(x_0, x_n))=x_n$ that has image
equal to the geodesic segment $[x_0, x_n]$. If for each $i=0,\dots,n$
we have $x_i=\gamma(a_i)$ where $0=a_0\leq a_1\leq \dots\leq a_n=d(x_0, x_n)$,
then we write
$[x_0, x_n]=[x_0, x_1, \dots, x_n]$, and call $[x_0, x_1, \dots, x_n]$ a \emph{piecewise segment}.

In other words, if $x_0, x_1,\dots, x_n$ are elements of $[x_0, x_n]$ listed
in increasing order of distance from $x_0$, then we  write $[x_0, x_n]=[x_0, x_1, \dots, x_n]$
for this \emph{piecewise segment.}
Note that we also know
$[x_1, x_n]=\bigcup_{i=1}^{n-1} [x_i, x_{i+1}]$, and  by Lemma 2.1.4 in Chiswell \cite{C} we have that
$[x_1, x_n]=[x_1,x_2,\dots,x_n]$ if and only if $d(x_1, x_n)=\sum_{i=1}^{n-1} d(x_i, x_{i+1})$.

\end{definition}

\begin{lemma}
\label{differentclosestpoints}
Let $E$ be a closed subtree of an $\R$-tree $M$ and let $a,b\in M$. Let $e_a\in E$ be the
unique closest point to $a$, and let $e_b\in E$ be the unique closest point to $b$. 
If $e_a\not=e_b$, then 
\[
d(a,b)=d(a, e_a)+d(e_a, e_b)+d(b, e_b) \text{.}
\]
That is, $[a, e_a, e_b, b]$ is a piecewise segment.
\end{lemma}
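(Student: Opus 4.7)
The plan is to reduce the claim to showing $e_a\in[a,b]$. Once this is established, Lemma~\ref{branch}(2) gives $d(a,b)=d(a,e_a)+d(e_a,b)$, and the fact recorded immediately after Lemma~\ref{!geo}, applied to the point $b$ and the closed subtree $E$ (which contains $e_a$), shows that $e_b$ lies on the segment $[b,e_a]$, so $d(e_a,b)=d(e_a,e_b)+d(e_b,b)$. Summing gives the stated identity, and the resulting ordering along $[a,b]$ shows that $[a,e_a,e_b,b]$ is a piecewise segment.

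To establish $e_a\in[a,b]$, I would first prove that $[a,b]\cap E\neq\emptyset$, arguing by contradiction. If $[a,b]$ and $E$ were disjoint, then since both are closed non-empty subtrees of $M$, Lemma~\ref{!geo} produces a unique shortest geodesic $[u,v]$ from $[a,b]$ to $E$ that is contained in every geodesic segment from a point of $[a,b]$ to a point of $E$. Applying this to $[a,e_a]$ forces $v\in[a,e_a]\cap E$; but that intersection is just $\{e_a\}$, because any point of $E$ lying on $[a,e_a]$ has distance at most $d(a,e_a)$ from $a$ and hence must equal $e_a$ by the closest-point property. So $v=e_a$. The same reasoning applied to $[b,e_b]$ yields $v=e_b$, contradicting the hypothesis $e_a\neq e_b$.

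Once we know $[a,b]\cap E$ contains some point $y$, the fact following Lemma~\ref{!geo}, applied to the point $a$ and the closed subtree $E$, tells us that $e_a$ lies on the geodesic $[a,y]$. Since $y\in[a,b]$ and $[a,b]$ is itself a subtree, $[a,y]\subseteq[a,b]$, and therefore $e_a\in[a,b]$, as required.

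The main obstacle is the non-emptiness of $[a,b]\cap E$: Lemma~\ref{!geo} is essentially the only tool relating two arbitrary subtrees, and its uniqueness conclusion is precisely what lets us identify the distinguished endpoint $v$ with both $e_a$ and $e_b$ to force the contradiction. After that, the closest-point property recorded just after Lemma~\ref{!geo} makes the remainder of the argument a short calculation.
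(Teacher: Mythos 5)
Your proof is correct, and it is precisely an elaboration of the paper's one-line proof, which simply says the lemma ``follows from Lemma \ref{branch} and Lemma \ref{!geo}'': your disjointness contradiction via the unique shortest segment $[u,v]$, the identification $[a,e_a]\cap E=\{e_a\}$ from the closest-point property, and the final concatenation via Lemma \ref{branch}(2) are all sound. No gaps.
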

\begin{proof}
Follows from Lemmas \ref{branch} and \ref{!geo}.
\end{proof}

Recall that $(X,d)$ is a \emph{pseudometric space} if $d \colon X^2 \to \R^{\geq 0}$ is
symmetric, satisfies the triangle inequality, and has $d(x,x)=0$ for all $x \in X$.  Its quotient metric space is
obtained by identifying $x,y \in X$ iff $d(x,y)=0$.

\begin{definition}[{{\it Gromov product\/}}]
\label{gromovproduct}
For a pseudometric space $(X,d)$ and $x,y,w \in X$, define
$$(x \cdot y)_w=\frac{1}{2}[d(x,w)+d(y,w)-d(x,y)] \text{.} $$
\end{definition}

It follows easily from Lemmas \ref{branch} and \ref{!geo} that in an $\R$-tree, 
the Gromov product $(x\cdot y)_w$ computes the distance from $w$ to the geodesic segment $[x,y]$.

\begin{definition}
\label{defhyperbolic}
Let $\delta \geq 0$. A pseudometric space $(X,d)$ is {\it $\delta$-hyperbolic} if,
for all $ x,y,z,w\in X$
\[
\min\{(x\cdot z)_w, (y\cdot z)_w\}-\delta \leq (x\cdot y)_w \text{.}
\]
\end{definition}

Note that if $(X,d)$ is a pseudometric space, then the quotient metric space
of $(X,d)$ is $\delta$-hyperbolic if and only if $(X,d)$ is $\delta$-hyperbolic.

In this paper we are nearly always interested in $0$-hyperbolic spaces,
and it is sometimes useful to have the following alternate characterization:

\begin{lemma}
\label{four point condition}
A pseudometric space $(X,d)$ is $0$-hyperbolic if and only if it satisfies
the following \emph{$4$-point condition}: for all $x, y, z, t\in X$
\[
d(x, y)+d(z, t)\leq \max\{d(x, z)+d(y, t), d(y, z)+d(x, t)\} \text{.}
\]
\end{lemma}
\begin{proof}
See Chiswell \cite[Lemma 1.2.6]{C} with $\delta=0$.
\end{proof}

\begin{lemma}
\label{triangle property}
If $M$ is a geodesic metric space, then $M$ is $0$-hyperbolic
if and only if, given any $a,b,c \in M$ and geodesic segments
$[a,b], [b,c],$ and $[c,a]$, the segment $[a,b]$ is contained in
$[b,c]\cup[c,a]$. 
\end{lemma}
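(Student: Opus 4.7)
The plan is to prove both directions of the equivalence separately, making essential use of the arc-length parametrization of geodesics guaranteed by the geodesic-space hypothesis.

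For the forward direction, I fix $p \in [a,b]$ and aim to find a point on $[b,c]\cup[c,a]$ within $\delta$ of $p$. Because $p$ lies on the geodesic $[a,b]$, the equality $d(a,p)+d(p,b)=d(a,b)$ gives $(a\cdot b)_p = 0$. Feeding this into the four-point definition of $\delta$-hyperbolicity with basepoint $w = p$ produces $\min\{(a\cdot c)_p,(b\cdot c)_p\}\le\delta$. Up to swapping $a$ and $b$, I may assume $(a\cdot c)_p \le \delta$, that is $d(a,p)+d(c,p) \le d(a,c)+2\delta$. I then parametrize $[a,c]$ by arc length from $a$ and take $q \in [a,c]$ with $d(a,q) = \min(d(a,p),d(a,c))$. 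The key estimate is to bound $d(p,q)$: the cleanest route is to apply the four-point inequality to the quadruple $(p,q,a,c)$, use the known distances $d(q,a)$ and $d(q,c)$, and extract a bound of the form $d(p,q)\le 2(a\cdot c)_p+2\delta$, which is controlled by $\delta$.

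For the converse, I assume every geodesic triangle is $\delta$-slim and derive the four-point inequality $(x\cdot y)_w\ge\min\{(x\cdot z)_w,(y\cdot z)_w\}-\delta$ for arbitrary $x,y,z,w\in M$. I form a geodesic triangle on $\{x,y,z\}$ and locate a distinguished point on $[x,y]$ playing the role of the projection of $w$; by the slim-triangle property this point is within $\delta$ of $[x,z]\cup[y,z]$, producing a nearby witness. Re-expressing the resulting distance estimates in terms of Gromov products based at $w$, together with standard triangle-inequality manipulations, yields the desired inequality.

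The main obstacle will be lining up the constants. In general geodesic spaces the equivalence holds only up to a universal multiplicative factor on $\delta$; for example, the bookkeeping in the forward direction above yields slim triangles with constant at worst $4\delta$ rather than $\delta$, and the converse similarly loses a small factor. In the $0$-hyperbolic case (most relevant for $\R$-trees) this distinction disappears and the constants agree, and for the general case I would either appeal directly to the proof in \cite{BH} or absorb the universal factor into $\delta$ as the lemma's framing permits.
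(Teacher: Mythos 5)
Your proposal is correct in outline but takes a genuinely different route from the paper: the paper's entire proof is a citation to \cite[Chapter III.H, Proposition 1.22]{BH}, whereas you sketch a self-contained argument. Your forward direction is sound: for $p\in[a,b]$ the vanishing of $(a\cdot b)_p$ plus the four-point hypothesis gives $\min\{(a\cdot c)_p,(b\cdot c)_p\}\le\delta$, and applying the (equivalent) four-point inequality to $(p,q,a,c)$ with $q\in[a,c]$ at distance $\min(d(a,p),d(a,c))$ from $a$ does yield $d(p,q)\le 2(a\cdot c)_p+2\delta\le 4\delta$ --- I checked the bookkeeping and it works. The converse is only sketched (the standard mechanism is that in a space with $\delta$-slim triangles one has $(x\cdot y)_w\le \dist(w,[x,y])\le (x\cdot y)_w+C\delta$, and slimness transfers the distance-to-side estimate across the three sides), but that is the right argument. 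The one substantive point, which you correctly flag, is that the lemma as stated asserts the equivalence with the \emph{same} $\delta$ in both directions; neither your argument nor the cited result in \cite{BH} gives that --- the equivalence holds only up to a universal multiplicative change in the constant. This looseness is harmless for the paper, which invokes the lemma only in the $0$-hyperbolic case (i.e., $\delta$-hyperbolic for every $\delta>0$), where the constant washes out; your approach buys an explicit, elementary proof at the cost of having to either restate the lemma up to a constant or restrict to $\delta=0$, while the paper's citation buys brevity at the cost of quoting a statement slightly stronger than its source.
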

\begin{proof}
See the proof of Proposition III.H.1.22 in Bridson and Haefliger \cite{BH}.
\end{proof}

\begin{definition}
\label{tree spanned} 
If $A \subseteq M$ is a subset of the $\R$-tree $M$, let $E_A$ denote the smallest subtree containing $A$.
We call this the \emph{$\R$-tree spanned by $A$.}
Note that
\[
E_A=\bigcup\{[a_1,a_2]\mid a_1, a_2\in A\} \text{.}
\]
The closure $\overline{E_A}$ of $E_A$ is the smallest closed subtree containing $A$.
\end{definition}

Note that if $A$ is finite, then $E_A=\overline{E_A}$ and $E_A$ is complete and has
finite diameter.  

\begin{definition}
An $\R$-tree $M$ is {\it finitely spanned}
if there exists a finite subset $A\subseteq  M$ such that $M=E_A$.
\end{definition}

\begin{lemma}
\label{0hyp}
(1)  A metric space is an $\R$-tree if and only if it is $0$-hyperbolic and geodesic.
\newline
(2) Any $0$-hyperbolic metric space
embeds isometrically in an $\R$-tree.
\newline
(3) Let $(X,d)$ be a $0$-hyperbolic metric space.  For $i=1,2$, suppose $f_i \colon (X,d) \to (M_i,d_i)$ are 
isometric embeddings into $\R$-trees, and let $E_i$ be the smallest subtree of $M_i$ containing $f_i(X)$.  Then there
is a unique isometry $g$ from $E_1$ onto $E_2$ such that $g \circ f_1 = f_2$.  
\end{lemma}
\begin{proof}
(1)(2) These are from Roe \cite[Proposition 6.12]{R}.
\newline
(3)  Let $(M,d)$  be any $\R$-tree extending $(X,d)$ and let $E$ be the smallest subtree of $M$ containing $X$.

For any points $u,u' \in E$ there exist $x,y,x',y' \in X$ such that $u \in [x,y], u' \in [x',y']$. (See Definition \ref{tree spanned}.)
The main point is to show that $d(u,u')$ is determined by the distances $d(x,u),d(x',u')$ together with the
pairwise distances between points in $\{x,y,x',y'\}$.  

Fix $u,u',x,y,x',y'$ as above.
By Lemma \ref{!geo} there exist $Y \in [x,y]$ and $Y' \in [x',y']$
such that  for any $v\in [x,y], v' \in [x',y']$ we have 
\begin{align*}
\tag*{(D)}  d(v,v') = d(v,Y)+d(Y,Y')+d(Y',v') \text{.}
\end{align*}
 (In particular, $[Y,Y']$ is the shortest geodesic segment among those between a point in $[x,y]$ and a point in $[x',y']$.)
 
Applying (D) to $x,x'$ and to $y,y'$ yields
\begin{align*}
d(x,x') &= d(x,Y)+d(Y,Y')+d(Y',x')\\
\tag*{and} d(y,y') &= d(y,Y)+d(Y,Y')+d(Y',y') \text{,}
\end{align*}
from which follows
\[
2d(Y,Y') = d(x,x')+d(y,y')-d(x,y)-d(x',y') \text{,}
\]
which shows that $d(Y,Y')$ is determined by the values of $d$ on $x,y,x',y'$.

Similarly, applying (D) to $x,y'$ and to $x,x'$ yields that $d(x,Y)$ is determined by the values 
of $d$ on $x,y,x',y'$.  Likewise, applying (D) to $x',y$ and to $x',x$ yields the same conclusion
for $d(x',Y')$.

Finally, applying (D) to the original pair $u,u'$ yields
\begin{align*}
d(u,u') &= d(u,Y)+d(Y,Y')+d(Y',u')\\
&= |d(x,Y)-d(x,u)| + d(Y,Y') + |d(x',Y')-d(x',u')|
\end{align*}
which gives the desired conclusion.

To construct the needed isometry $g$, there is an obvious definition from a segment 
of the form $[f_1(x),f_1(y)]$ in $M_1$ into $M_2$, for each $x,y \in X$, by taking
$g$ to be the unique isometry from $[f_1(x),f_1(y)]$ in $M_1$ onto $[f_2(x),f_2(y)]$ in $M_2$ that 
takes $f_1(x)$ to $f_2(x)$ and $f_1(y)$ to $f_2(y)$.  What is proved above shows 
that the union of all such maps is a well defined isometry from $E_1$ onto $E_2$ that satisfies $g \circ f_1 = f_2$.
Every isometry from $E_1$ to $E_2$ that satisfies $g \circ f_1 = f_2$ must agree with this map $g$.
\end{proof}

Given $a, b, c$ in an $\R$-tree, there is a unique point $Y$ so that $[a,b]\cap[a,c]=[a,Y]$.
In Chiswell \cite{C} after Lemma 2.1.2, it is shown that this $Y$ is also the unique point so that
$[b,a]\cap[b,c]=[b, Y]$ and $[c,a]\cap[c,b]=[c, Y]$, and that in fact,
$\{Y\}=[a,b]\cap[b,c]\cap[a,c]$.

\begin{notation}
\label{Ypoint}
If $a,b,c$ are points in an $\R$-tree, we denote by $Y(a,b,c)$
the unique point such that $\{Y\}=[a,b]\cap[b,c]\cap[a,c]$.
This point will be denoted simply by $Y$ when $a, b, c$ are understood.
\end{notation}

That an $\R$-tree is $0$-hyperbolic tells us that for any 3 points $a, b, c$
the segment $[a,b]$ is contained in $[b,c]\cup [c,a]$, by \ref{triangle property}
and \ref{0hyp}. Thus, 
the subtree $E$ spanned by $a, b, c$ 
is comprised of the segments $[a, Y], [b,Y]$ and $[c, Y]$, which share only the point
$Y=Y(a,b,c)$.  Therefore we have two possibilities: either $Y$ is one of $a,b,c$ and
$E$ is the segment connecting the other two; or $Y \not\in \{a,b,c\}$ and $E$ is ``Y-shaped",
explaining the notation introduced in \ref{Ypoint} above. (See Chiswell \cite[2.1.2]{C}.)

\begin{definition}
\label{D: endpoint}
Let $M$ be an $\R$-tree.  If $c\in M$ is such that there do \emph{not}
exist $a,b\in M\setminus \{c\}$ with
$c\in [a,b]$, then $c$ is called an {\it endpoint} of $M$.
Equivalently, an endpoint is a point with degree at most one.
\end{definition}

\begin{lemma}
\label{mingenset}
If an $\R$-tree $M$ is finitely spanned and $C$ is the set of endpoints of $M$, then
\begin{enumerate}
    \item if $B$ spans $M$, then $C\subseteq B$;
    \item the set $C$ spans $M$.
\end{enumerate}
Thus, $C$ is finite, and it is the unique smallest set that spans $M$.
\end{lemma}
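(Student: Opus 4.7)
Part (1) is immediate from the definitions. Given $c\in C$, choose $b_1,b_2\in B$ with $c\in[b_1,b_2]$ (possible since $M=E_B$). If $c$ were distinct from both $b_1$ and $b_2$, then $b_1,b_2\in M\setminus\{c\}$ would witness that $c$ is not an endpoint, a contradiction. Hence $c=b_1$ or $c=b_2$, so $c\in B$.

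For part (2), part (1) gives $C\subseteq B$, so $C$ is finite and $E_C\subseteq M$ is automatic; only the reverse inclusion is substantive. Since $M=\bigcup\{[b_1,b_2]\mid b_1,b_2\in B\}$ is a finite union of compact geodesic segments, $M$ is compact. Fix $x\in M$; the goal is $x\in E_C$. If $x$ itself is an endpoint, then $x\in C\subseteq E_C$ and we are done. Otherwise, choose $a_1,a_2\in M\setminus\{x\}$ with $x\in[a_1,a_2]$, so by Lemma \ref{branch}(3) they lie in distinct branches $\beta_1,\beta_2$ at $x$. The plan is to produce endpoints $c_i\in\beta_i$ for $i=1,2$; once we have these, Lemma \ref{branch}(3) applied in the other direction gives $x\in[c_1,c_2]\subseteq E_C$.

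To produce $c_i$, observe that $\overline{\beta_i}=\beta_i\cup\{x\}$ is a closed subset of the compact space $M$, so the continuous function $y\mapsto d(x,y)$ attains its maximum on $\overline{\beta_i}$, say at $c_i$. Since $a_i\in\overline{\beta_i}$ with $d(x,a_i)>0$, we have $c_i\neq x$, and hence $c_i\in\beta_i$ as required.

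The main obstacle is verifying that each $c_i$ is an endpoint of $M$, rather than only of the subtree $\overline{\beta_i}$. Suppose toward contradiction that $c_i$ is not an endpoint, so $c_i\in[e_1,e_2]$ for some $e_1,e_2\in M\setminus\{c_i\}$ that lie in distinct branches at $c_i$. At least one of these branches---say the one $\gamma$ containing $e_1$---does not contain $x$. The key geometric point is that $\gamma\subseteq\beta_i$: every $y\in\gamma$ can be joined to $c_i$ by a path inside $\gamma\cup\{c_i\}$, which avoids $x$, so $y$ and $c_i$ lie in the same component of $M\setminus\{x\}$, namely $\beta_i$. Hence $e_1\in\overline{\beta_i}$. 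Since $x$ and $e_1$ lie in different branches at $c_i$, Lemma \ref{branch}(3) gives $c_i\in[x,e_1]$, and then Lemma \ref{branch}(2) yields $d(x,e_1)=d(x,c_i)+d(c_i,e_1)>d(x,c_i)$, contradicting the maximality of $c_i$. Therefore $c_i\in C$, completing the argument.
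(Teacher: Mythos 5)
Your proof is correct, but for part (2) it takes a different route from the paper's. The paper fixes $a\in M$, applies Zorn's Lemma to the poset of geodesic segments containing $a$ (using completeness and bounded diameter of a finitely spanned tree to close off unions of chains), and then argues that the two ends of a maximal such segment must be endpoints of $M$. You instead exploit the finite-spanning hypothesis more directly: $M$ is a finite union of compact segments, hence compact, so on the closure of each of two branches at $x$ the function $d(x,\cdot)$ attains a maximum, and you verify that any such farthest point is an endpoint of $M$ (your careful check that the witnessing branch $\gamma$ at $c_i$ sits inside $\beta_i$, so that the extension $e_1$ still lies in the set over which you maximized, is exactly the point that needs care, and you handle it correctly via Lemma \ref{branch}(2),(3)). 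Conceptually the two arguments are parallel --- both produce an ``extremal'' point in each direction from $x$ and show extremality forces it to be an endpoint --- but your existence mechanism (compactness) avoids Zorn's Lemma and is more elementary, at the cost of being tied to the finitely spanned case, whereas the paper's Zorn argument would also apply to any complete bounded $\R$-tree. Your part (1) is the same argument as the paper's, phrased directly rather than by contradiction. One small point you gloss over is that $\beta_i\cup\{x\}$ is closed; this follows since branches at $x$ are open (open balls in an $\R$-tree are connected, so $M$ is locally connected and components of $M\setminus\{x\}$ are open), and is worth a sentence.
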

\begin{proof}
Let $M$ be a finitely spanned $\R$-tree. Let $D$ be the diameter of $M$.
Let $B$ be a set that spans $M$.\\
Proof  of (1): Assume there is an endpoint $c\in M$ not contained in $B$.
Then there must exist $a, b\in B$ such that $c\in [a,b]$ and $c \neq a,b$.  But, this is a contradiction
because $c$ is an endpoint.\\
Proof of (2): Let $a\in M$.
Let $S_a$ be the set of all segments $[b, c]\subseteq M$ such that $a\in [b,c]$
and order $S_a$ by inclusion.
This is a partial ordering on $S_a$.
Let $\{[b_i,c_i]\mid i\in \alpha\}$ be a chain in this partial ordering, where $\alpha$ is a cardinal.
Let $I$ be the closure of $\bigcup_{i\in \alpha}[b_i, c_i]$.  Then $I$ is a geodesic segment in $M$.
Clearly $a\in I$, and the length of $I$ is at most $D$. Therefore $I\in S_a$, and
$I$ is an upper bound for the chain. The chain was arbitrary, 
so any chain has an upper bound. Therefore, by Zorn's Lemma
there exists a maximal element of $S_a$.  Let $[b_a,c_a]$ denote such a maximal element.
The elements $b_a$ and $c_a$ must be endpoints of $M$.  Say, for instance,
that $b_a$ is not an endpoint.  Then there exist $e, f\in M$ such that
$b_a\in[e,f]$ and $b_a \neq e,f$, and then either $[e,c_a]$ or $[f,c_a]$ will properly contain $[b_a, c_a]$.
This would mean $[b_a, c_a]$ was not maximal in $S_a$.
Therefore, for each $a\in M$, there exist endpoints
$b_a$ and $c_a$ so that $a\in [b_a, c_a]$.
So, $M$ is spanned by the set of its endpoints, and this spanning
set is as small as possible by (1).
\end{proof}

As noted above, if $(X,d)$ is a $\R$-tree, then so is its completion $(\overline X,d)$.
The next result gives information about the structure of $\overline X$.

\begin{lemma}
\label{completing a tree}
Let $(X,d,p)$ be a pointed $\R$-tree and $(\overline X,d,p)$ its completion, and suppose
$x \in \overline X \setminus X$.  Then:
\newline
(1) there exist $(x_n)$ in $X$ converging to $x$ such that $[p,x_1,\dots,x_n,x]$ is a
piecewise segment in $\overline X$ for all $n \geq 1$; and
\newline
(2) $x$ is an endpoint in $\overline X$.
\end{lemma}
\begin{proof}
(1) Given $x \in \overline X \setminus X$, let $(y_n)$ be a Cauchy sequence in $X$ that
converges to $x$ in the $\R$-tree $\overline X$.  Without loss of generality we may assume that
$(d(y_n,x))$ is decreasing toward $0$.  For each $n$, let $x_n$ be the
closest point in $\overline X$ on the segment $[p,x]$ to $y_n$.  By Lemma \ref{!geo}, $x_n$ is on
the segment $[p,y_n]$ in $\overline X$ and thus $x_n \in X$.  Further, from the
same Lemma and the assumptions on $(y_n)$ we conclude that $[p,x_1,\dots,x_n,x]$
is a piecewise segment in $\overline X$ for each $n$ and $(x_n)$ converges to $x$.

(2) If $x$ were not an endpoint in $\overline X$, there would be $y \in \overline X$ such
that $p$ and $y$ were in different branches in $\overline X$ at $x$.  By part (1), there must
exist $z \in X$ on the segment $[p,y]$ in $\overline X$ such that $z$ is closer to $y$ than $x$.
That is, $[p,x,z,y]$ would be a piecewise segment in $\overline X$.  But then $x$ would be 
on the segment $[p,z]$, which lies entirely in $X$.  This contradiction completes the proof.
\end{proof}
 
\section{Some continuous model theory}

We investigate the model theory of $\R$-trees using
continuous logic for metric structures as presented in 
Ben Yaacov, Berenstein, Henson and Usvyatsov \cite{N} and 
Ben Yaacov and Usvyatsov \cite{BU}. 
In this section we remind the reader of a few key
concepts and results from those papers, and then we present 
a few facts about existentially closed models and model
companions that are not discussed there.
For the rest of this section we fix a continuous first
order language $L$.

As explained in \cite[Section 3]{N}, in continuous model theory it is required that 
\emph{structures} and \emph{models} are metrically complete.  However, formulas 
and conditions are evaluated more generally in \emph{pre-structures}, as explained 
in \cite[Definition 3.3]{N}. Further, it is shown in \cite[Theorem 3.7]{N} that the 
completion of a prestructure is an elementary extension.  In this paper we use 
notation of the form $\M \models \theta$ only when $\M$ is a structure; 
in other words, $M$ must be metrically complete.

  Next, some reminders about saturation in continuous logic.
  A set $\Sigma(x_1,\dots,x_n)$ of $L$-conditions (with free variables
  among $x_1,\dots,x_n$) is called {\it satisfiable in $\M$} if
  there exist $a_1,\dots,a_n$ in $M$ such that $\M\models E[a_1,\dots,a_n]$
  for every $E(x_1,\dots,x_n)\in \Sigma$.
  Let $\kappa$
  be a cardinal. A model $\M$ of $T$ is called {\it $\kappa$-saturated} if
  for any set of parameters $A\subseteq M$ with cardinality $<\kappa$
  and any set $\Sigma(x_1,\dots,x_n)$ of $L(A)$-conditions, if every finite subset
  of $\Sigma(x_1,\dots,x_n)$ is satisfiable in $(\M,a)_{a\in A}$, then the entire set $\Sigma(x_1,\dots,x_n)$
  is satisfiable in $(\M,a)_{a\in A}$.
 
   \begin{proposition}
  For any countably incomplete ultrafilter $U$ on $I$, the $U$-ultraproduct
  of a family of $L$-structures $(\M_i\mid i\in I)$ is $\omega_1$-saturated.
  \end{proposition}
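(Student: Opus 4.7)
The plan is to adapt the classical proof of $\aleph_1$-saturation of countably incomplete ultraproducts to the continuous setting. Using countable incompleteness of $U$, I fix a decreasing chain $I = I_0 \supseteq I_1 \supseteq I_2 \supseteq \ldots$ of sets in $U$ with $\bigcap_n I_n = \emptyset$. Let $\mathcal{M} = \prod_U \mathcal{M}_i$, let $A \subseteq M$ with $|A| \leq \aleph_0$, and let $\Sigma(\bar x)$ be a set of $L(A)$-conditions finitely satisfiable in $(\mathcal{M}, a)_{a\in A}$. Since $A$ is at most countable, one may assume (working inside the separable sublanguage of $L$ that actually occurs in $\Sigma$) that $\Sigma$ itself is countable, enumerated $\Sigma = \{\sigma_k : k < \omega\}$, with each condition in the normal form $\sigma_k = (\phi_k(\bar x, \bar a_k) \leq 1/m_k)$ for some $L$-formula $\phi_k$, tuple $\bar a_k$ from $A$, and integer $m_k \geq 1$; pick representatives so that $\bar a_k = [\bar a_{k,i}]_U$.

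For each $n \geq 1$ the conjunction $\sigma_0 \wedge \cdots \wedge \sigma_{n-1}$ is equivalent to the single condition $\psi_n(\bar x) = 0$, where $\psi_n(\bar x) := \max_{k<n}\bigl(\phi_k(\bar x, \bar a_k) \dotminus 1/m_k\bigr)$. Finite satisfiability in $(\mathcal{M}, a)_{a\in A}$ forces $\inf_{\bar x}\psi_n(\bar x) < 1/n$ in $\mathcal{M}$, so by \L{}o\'s's theorem for continuous logic, the set
\[
J_n := \bigl\{\, i \in I \,:\, \inf_{\bar c \in \mathcal{M}_i}\max_{k<n}\bigl(\phi_k^{\mathcal{M}_i}(\bar c, \bar a_{k,i}) \dotminus 1/m_k\bigr) < 1/n \,\bigr\}
\]
lies in $U$. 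Put $J_n^\ast := I_n \cap J_1 \cap \cdots \cap J_n$; then $(J_n^\ast)_n$ is decreasing, lies in $U$, and has empty intersection since $\bigcap_n I_n = \emptyset$.

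For each $i \in I$ let $n(i) := \max\{\, n : i \in J_n^\ast\,\}$, finite by the previous step (with the convention $n(i) = 0$ when $i \notin J_1^\ast$). When $n(i) \geq 1$, pick a witness $\bar b_i$ in $\mathcal{M}_i$ with $\max_{k < n(i)}\bigl(\phi_k^{\mathcal{M}_i}(\bar b_i, \bar a_{k,i}) \dotminus 1/m_k\bigr) < 1/n(i)$; otherwise take $\bar b_i$ arbitrary. Then $\bar b := [\bar b_i]_U$ realizes every $\sigma_k$: for any $n > k$ and any $i \in J_n^\ast$ the witness property gives $\phi_k^{\mathcal{M}_i}(\bar b_i, \bar a_{k,i}) \leq 1/m_k + 1/n$, so by \L{}o\'s's theorem $\phi_k^{\mathcal{M}}(\bar b, \bar a_k) \leq 1/m_k + 1/n$; letting $n \to \infty$ gives $\sigma_k$ at $\bar b$.

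The two essential points are the countable incompleteness of $U$, which provides the sequence $(I_n)$ that powers the diagonal construction, and the continuous-logic reformulation of satisfaction in terms of the $1/n$-approximations $\psi_n < 1/n$, replacing the strict conjunctions used in classical logic. The main risk of going wrong is in the reduction to countable $\Sigma$ and in tracking the approximations carefully; once the bookkeeping is in place, the argument is routine.
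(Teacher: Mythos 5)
The paper does not actually prove this proposition; it is quoted verbatim from the background literature and the ``proof'' is just a citation to \cite[Proposition 7.6]{N}. Your argument is the standard Keisler-style diagonal construction that underlies that reference, and its core is correct: the sets $J_n$ belong to $U$ by {\L}o\'s's theorem because $\inf_{\bar x}\psi_n$ evaluates to $0$ in the ultraproduct, the sets $J_n^\ast=I_n\cap J_1\cap\cdots\cap J_n$ are a decreasing chain in $U$ with empty intersection, the index $n(i)$ is finite for every $i$, and the limit computation $\phi_k^{\M}(\bar b,\bar a_k)\leq 1/m_k+1/n$ for all $n>k$ is exactly right. One bookkeeping point you should make explicit: a condition $\phi=0$ is only captured by your normal form if you replace it by the whole countable family $\{\phi\leq 1/m \mid m\geq 1\}$ before enumerating $\Sigma$; a single bound $\phi\leq 1/m_k$ is strictly weaker.

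The one genuine soft spot is the reduction to countable $\Sigma$. Your parenthetical justification --- ``working inside the separable sublanguage of $L$ that actually occurs in $\Sigma$'' --- is circular: if $\Sigma$ is uncountable, the sublanguage occurring in it need not be countable or separable, so this does not shrink $\Sigma$. Indeed, for an uncountable signature the proposition as literally stated is false (the classical counterexample with $\omega_1$ many $\{0,1\}$-valued unary predicates and a countable index set transfers directly to continuous logic), so some countability hypothesis on $L$ is unavoidable. The correct justification is: when $L$ is countable (as $L_r$ is here, having a single constant symbol), the space of $L(A)$-formulas in the variables $\bar x$ with $A$ countable is separable in the uniform metric, so one may replace $\Sigma$ by a countable set of conditions $\psi\leq 1/m+\varepsilon$ drawn from a countable dense family of formulas, with the same finite satisfiability and the same set of realizations in every model. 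With that repair, your proof is complete and is essentially the argument the paper is citing.
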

\begin{proof}
See Ben Yaacov, Berenstein, Henson and Usvyatsov \cite[Proposition 7.6]{N}.
\end{proof}
  Note that any non-principal ultrafilter on $\N$ is countably incomplete.
  
  \begin{proposition}
  For any cardinal $\kappa$, any $L$-structure $\M$ has a $\kappa$-saturated elementary extension.
  \end{proposition}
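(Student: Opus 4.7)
The plan is to adapt the classical iterated elementary extension argument to the continuous setting. Without loss of generality $\kappa$ is infinite; set $\lambda = \kappa^+$, which is regular. I will build a continuous elementary chain $\M = \M_0 \preceq \M_1 \preceq \cdots \preceq \M_\alpha \preceq \cdots$ for $\alpha < \lambda$, where each successor step realizes every finitely satisfiable set of $L(A)$-conditions (in finitely many variables) with $|A| < \kappa$ taken from the previous stage, and where at each limit stage the union is completed. The final model $\M^* := \M_\lambda$ will be $\kappa$-saturated.

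For the one-step extension, given a structure $\n$, I enumerate all pairs $(\bar c_i, \Sigma_i(\bar x))$ where $A_i \subseteq N$ with $|A_i| < \kappa$, $\Sigma_i$ is a set of $L(A_i)$-conditions in finitely many variables finitely satisfiable in $(\n, a)_{a \in A_i}$, and $\bar c_i$ is a disjoint tuple of fresh constants of matching arity. Form $T = \Ediag(\n) \cup \bigcup_i \Sigma_i(\bar c_i)$. A finite $T_0 \subseteq T$ involves only finitely many $\Sigma_i$ and finitely many conditions from each, so by finite satisfiability one interprets the involved constants by suitable elements of $\n$ to satisfy $T_0$. The continuous compactness theorem then yields a model of $T$, hence an elementary extension $\n \preceq \n'$ in which each $\Sigma_i$ is realized by $\bar c_i$.

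To iterate, set $\M_{\alpha+1}$ to be the one-step extension of $\M_\alpha$, and at a limit ordinal $\lambda' \leq \lambda$ let $\M_{\lambda'}$ be the completion of the prestructure $\bigcup_{\alpha < \lambda'} \M_\alpha$; by a continuous Tarski--Vaught argument this union is pre-elementary over each $\M_\alpha$, and by \cite[Theorem 3.7]{N} its completion is an elementary extension, so the chain remains continuous. Since every $\M_\alpha$ is already metrically complete, any Cauchy sequence from $\bigcup_{\alpha < \lambda} \M_\alpha$ is contained in some $\M_\beta$ (take $\beta$ to be the sup of the indices, which is below $\kappa^+$ by regularity), so its limit lies in $\M_\beta$. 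Hence every $A \subseteq M^*$ with $|A| < \kappa$ sits inside some $\M_\alpha$ with $\alpha < \kappa^+$, and any finitely satisfiable set $\Sigma$ of $L(A)$-conditions is realized in $\M_{\alpha+1} \preceq \M^*$, establishing $\kappa$-saturation. The only continuous-logic subtlety lies in this limit-stage elementarity, which is precisely what \cite[Theorem 3.7]{N} is designed to handle; everything else is a routine transcription of the classical argument.
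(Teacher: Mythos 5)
The paper offers no argument of its own here---it simply cites \cite[Proposition 7.10]{N}---so your proposal is being compared with the standard proof rather than with anything in the text. Your overall strategy is the right one: an elementary chain of length $\kappa^+$ whose successor steps realize types over small parameter sets by applying compactness to the elementary diagram, with limit stages handled by completing the union and invoking \cite[Theorem 3.7]{N}. The one-step extension and the limit-stage bookkeeping (including the regularity argument showing the union at stage $\kappa^+$ is already complete) are fine.

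There is, however, a genuine gap in the final step, and it is not the limit-stage elementarity that you single out as ``the only continuous-logic subtlety.'' At stage $\alpha+1$ you realize those $\Sigma_i$ that are finitely satisfiable \emph{in} $(\M_\alpha,a)_{a\in A_i}$; but to verify $\kappa$-saturation of $\M^*$ you are handed a $\Sigma$ over $A\subseteq M_\alpha$ that is finitely satisfiable in $(\M^*,a)_{a\in A}$, and you assert it is realized in $\M_{\alpha+1}$. That would require $\Sigma$ to be finitely satisfiable in $\M_\alpha$ itself, and in continuous logic exact finite satisfiability does \emph{not} pass down to an elementary substructure: if a finite subset $\{\varphi_1=0,\dots,\varphi_k=0\}$ of $\Sigma$ is realized exactly in $\M^*$, then elementarity only gives $(\inf_x\max_i\varphi_i)^{\M_\alpha}=0$, i.e.\ $\varepsilon$-realizations in $\M_\alpha$ for every $\varepsilon>0$, not an exact one. (In classical logic satisfiability of a finite set of formulas is an existential sentence preserved by elementarity; here $\inf$ is only an approximate existential, and this is exactly the point where the classical argument must be modified.) Two standard repairs: (i) at each successor step realize every $\Sigma_i$ that is merely consistent with $\Ediag(\M_\alpha)$ (equivalently, approximately finitely satisfiable in $\M_\alpha$); your compactness argument for $T$ still works, since approximate satisfaction of the finite fragments suffices for the ultraproduct step. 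Or (ii) keep your construction and add a cofinality argument: choose in $\M^*$ an exact witness for each finite subset of $\Sigma$; these all lie in some $\M_\beta$ with $\beta<\kappa^+$ provided the number of finite subsets of $\Sigma$ is below $\operatorname{cf}(\kappa^+)=\kappa^+$, and then $\Sigma$ is exactly finitely satisfiable in $\M_\beta$ and realized in $\M_{\beta+1}$. Option (ii) requires $\kappa$ to be at least the cardinality of the set of $L(A)$-conditions, which is harmless (either lengthen the chain or note that $\kappa$-saturation follows from $\kappa'$-saturation for $\kappa'\geq\kappa$). With either repair the proof is correct.
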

\begin{proof}
See \cite[Proposition 7.10]{N}.
\end{proof}

\begin{definition}
An \emph{$\inf$-formula} of $L$ is a formula of the form
$$\inf_{y_1}\dots\inf_{y_n}\phi(x_1,\dots,x_k,y_1,\dots,y_n)$$
where $\phi(x_1,\dots,x_k, y_1,\dots,y_n)$ is quantifier-free.
\end{definition}
A $\sup$-formula of $L$ is defined similarly. These $\sup$-formulas
are the universal formulas in continuous logic. For an $L$-theory $T$, we use the
notation $T_\forall$ for the set of universal sentences $\sigma$ ($\sup$-formulas with no free variables) 
such that the condition $\sigma=0$ is implied by the theory $T$.  
Note that, as in classical first order logic, $T_\forall$
is the theory of the class of $L$-substructures of models of $T$.

\begin{definition}
Let $T$ be an $L$-theory and suppose $\M \models T$. We say $\M$ is
an {\it existentially closed (e.c.)} model of $T$ if, for any
$\inf$-formula $\psi(x_1,\dots,x_m)$,
any $a_1,\dots,a_k \in M$, and any $\n \models T$ that is an extension of $\M$,
we have $\psi^\n(a_1,\dots,a_m) = \psi^\M(a_1,\dots,a_m)$.
\end{definition}

An $L$-theory $T$ is {\it model complete} if any embedding between models
of $T$ is an elementary embedding.

\begin{proposition}
\label{allec}
The $L$-theory $T$ is model complete if and only if
every model of $T$ is an existentially closed model of $T$.
\end{proposition}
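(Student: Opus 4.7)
The forward direction is immediate from the definitions: if $T$ is model complete and $\M \subseteq \n$ are models of $T$, then this inclusion is elementary, so every formula (in particular every $\inf$-formula) takes equal values at tuples from $M$, confirming $\M$ is existentially closed.

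For the converse, my plan is the standard alternating-chain argument adapted to continuous logic. Fix models $\M \subseteq \n$ of $T$; I will build $\M = \M_0 \subseteq \n_0 = \n \subseteq \M_1 \subseteq \n_1 \subseteq \cdots$, all models of $T$, with $\M_i \preceq \M_{i+1}$ and $\n_i \preceq \n_{i+1}$. The continuous elementary-chain theorem then makes the common union $\w := \bigcup_i \M_i = \bigcup_i \n_i$ an elementary extension of every $\M_i$ and every $\n_i$. In particular $\M \preceq \w$ and $\n \preceq \w$; combined with $\M \subseteq \n \subseteq \w$, this forces $\M \preceq \n$, so $T$ is model complete.

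The heart of the proof is the single extension step: given models $\M_i \subseteq \n_i$ of $T$ with $\M_i$ existentially closed, produce an elementary extension $\M_{i+1} \succeq \M_i$ containing $\n_i$ as a substructure (iterating with roles swapped then supplies $\n_{i+1}$, since $\M_{i+1}$ is also a model of $T$ and hence e.c.\ by hypothesis). For this I will show that $\Ediag(\M_i) \cup \Diag(\n_i)$, the elementary diagram of $\M_i$ together with the atomic diagram of $\n_i$ over $M_i$, is satisfiable as a set of $L(M_i \cup N_i)$-conditions; any satisfying structure is, after identification of constants, the desired $\M_{i+1}$. Suppose it were not satisfiable. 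Compactness in continuous logic would yield a finite unsatisfiable subset; after bundling its atomic part there would exist quantifier-free $\phi_1(\bar a, \bar b), \ldots, \phi_k(\bar a, \bar b)$ (with $\bar a$ in $M_i$, $\bar b$ in $N_i$) and some $\epsilon > 0$ such that $\Ediag(\M_i)$ entails $\inf_{\bar y} \max_j \phi_j(\bar a, \bar y) \geq \epsilon$. But $\psi(\bar x) := \inf_{\bar y} \max_j \phi_j(\bar x, \bar y)$ is an $\inf$-formula, and the tuple $\bar b$ witnesses $\psi^{\n_i}(\bar a) = 0$. Existential closedness of $\M_i$ then forces $\psi^{\M_i}(\bar a) = \psi^{\n_i}(\bar a) = 0 < \epsilon$, contradicting the previous inequality.

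The main technical obstacle will be the careful translation between the continuous diagram $\Ediag(\M_i) \cup \Diag(\n_i)$ and the single $\inf$-formula $\psi$: one must collect the finitely many atomic conditions produced by compactness into one $\max$, and read the failure of approximate satisfiability as a uniform strictly positive lower bound $\epsilon$ rather than a merely nonzero value. Once this bookkeeping is in place, existential closedness applied to $\psi$ closes the argument cleanly, and no further continuous-logic subtleties beyond the compactness and elementary-chain theorems (standard in \cite{N,BU}) are needed.
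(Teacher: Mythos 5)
Your proof is correct and is precisely the adaptation of Robinson's criterion (Hodges, Theorem 8.3.1) that the paper invokes without writing out: the alternating elementary/substructure chain together with the satisfiability of $\Ediag(\M_i)\cup\Diag(\n_i)$, where existential closedness of $\M_i$ supplies exactly the approximate finite satisfiability needed to run compactness. The only point worth making explicit in the continuous setting is that the common limit $\w$ should be taken as the metric completion of the union of the chain (still an elementary extension of every link by the continuous elementary chain theorem), but this is routine.
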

\begin{proof}
This is Robinson's Criterion for model completeness.  The proof given by
Hodges \cite[Theorem 8.3.1]{Ho} 
for classical first order logic can easily be adapted to the continuous setting.
\end{proof}

In Ben Yaacov \cite[Appendix A]{IN} there is some further discussion of 
$\inf$- and $\sup$-formulas and of model completeness.

\begin{definition}
\label{defnmodelcompanion} Let $T$ be an $L$-theory. A {\it model
companion} of $T$ is an $L$-theory $S$ such that:
\begin{itemize}
\item every model of $S$ embeds in a model of $T$;
\item every model of $T$ embeds in a model of $S$;
\item $S$ is model complete.
\end{itemize}
\end{definition}

Note that the first two criteria in this definition together
are equivalent to the statement $S_{\forall}=T_{\forall}$.
As in classical first order logic, if a theory has a model
companion, then that model companion is unique (up to equivalence of
theories).

Recall that a theory $T$ is \emph{inductive} if whenever $\Lambda$ is a linearly ordered set
and $(\M_\lambda\mid\lambda \in \Lambda)$ is a chain of
models of $T$, then the completion of the union of $(\M_\lambda\mid\lambda \in \Lambda)$ is a model of $T$.

\begin{proposition}
\label{axiomatizeec}
Let $T$ be an inductive $L$-theory
and let $\mathcal{K}$ be the class of existentially closed models of $T$.
If there exists an $L$-theory $S$ so that $\mathcal{K}=\Mod(S)$, then
$S$ is the model companion of $T$.
\end{proposition}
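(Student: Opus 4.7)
The plan is to verify the three clauses in Definition \ref{defnmodelcompanion}. The first---every model of $S$ embeds in a model of $T$---is immediate, since any $\M\models S$ lies in $\mathcal{K}$ and is therefore already a model of $T$.

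For the third clause, that $S$ is model complete, I would invoke Proposition \ref{allec} and show that every $\M\models S$ is existentially closed as a model of $S$. Suppose $\M,\n\models S$ with $\M$ an $L$-substructure of $\n$. Then both $\M$ and $\n$ lie in $\mathcal{K}$, so both are existentially closed models of $T$; in particular $\n$ is a model of $T$ extending $\M$. Applying the e.c.\ property of $\M$ in $T$ to this extension gives $\psi^\M(\bar a)=\psi^\n(\bar a)$ for every $\inf$-formula $\psi$ and tuple $\bar a$ from $M$. Since every extension of $\M$ inside $\Mod(S)$ is in particular a $T$-extension of $\M$, this is exactly the e.c.\ condition required of $\M$ within $\Mod(S)$.

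The heart of the argument is the second clause: every $\M\models T$ embeds in an existentially closed model of $T$. Given $\M\models T$, for each $\inf$-formula $\psi(\bar x)$ and tuple $\bar a$ from $M$ set
$$
r_\M(\psi,\bar a) \;=\; \inf\{\psi^\n(\bar a) : \M\subseteq\n,\ \n\models T\}.
$$
The key one-step lemma is: for each such pair $(\psi,\bar a)$ there is an extension $\M^\star\supseteq\M$ with $\M^\star\models T$ and $\psi^{\M^\star}(\bar a)=r_\M(\psi,\bar a)$. To produce $\M^\star$, for each $n\in\N$ choose $\n_n\supseteq\M$ with $\n_n\models T$ and $r_\M(\psi,\bar a)\leq\psi^{\n_n}(\bar a)<r_\M(\psi,\bar a)+1/n$, then set $\M^\star=\prod_U\n_n$ for a non-principal ultrafilter $U$ on $\N$. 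By the continuous version of \L o\'s's theorem, $\M^\star\models T$, $\M$ embeds diagonally, and $\psi^{\M^\star}(\bar a)=\lim_U\psi^{\n_n}(\bar a)=r_\M(\psi,\bar a)$.

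Iterating this lemma with a standard bookkeeping enumeration, I would build an ordinal-indexed chain $(\M_\alpha)_{\alpha<\kappa}$ of models of $T$ (with completions of unions at limit stages, which remain models of $T$ by inductivity) such that every pair $(\psi,\bar a)$ with $\bar a\in\bigcup_\alpha M_\alpha$ is handled at some stage $\beta$, meaning $\psi^{\M_{\beta+1}}(\bar a)=r_{\M_\beta}(\psi,\bar a)$. Let $\M_\infty$ be the completion of $\bigcup_{\alpha<\kappa}\M_\alpha$, so $\M_\infty\models T$ by inductivity. For $\bar a\in\bigcup_\alpha M_\alpha$ handled at stage $\beta$ and any $T$-extension $\n$ of $\M_\infty$,
$$
\psi^{\M_\infty}(\bar a)\leq\psi^{\M_{\beta+1}}(\bar a)=r_{\M_\beta}(\psi,\bar a)\leq r_{\M_\infty}(\psi,\bar a)\leq\psi^\n(\bar a),
$$
while $\psi^{\M_\infty}(\bar a)\geq\psi^\n(\bar a)$ because $\psi$ is an $\inf$-formula and $\M_\infty\subseteq\n$, forcing equality. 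Uniform continuity of $\psi$ extends the equality to all $\bar a\in M_\infty$, showing $\M_\infty$ is e.c. The main obstacle is the one-step ultraproduct construction along with the bookkeeping to ensure every pair (including those whose parameters appear only at later stages) is eventually handled; once these are in place, the remainder of the argument is a routine continuous-logic adaptation of the classical proof.
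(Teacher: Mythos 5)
Your proof is correct and is essentially the adaptation of the classical argument (Hodges, Theorem 8.3.6) that the paper's proof merely cites without carrying out: the same three-clause verification, with the chain construction for clause two and Robinson's criterion for clause three. The one genuinely continuous-logic ingredient you supply --- using an ultraproduct of approximating extensions so that the infimum $r_\M(\psi,\bar a)$ is attained exactly in a single step, followed by the sandwich of inequalities and a density/uniform-continuity argument in the completed union --- is exactly the right way to handle real-valued satisfaction, and the bookkeeping you defer is indeed routine since each pair needs to be handled only once.
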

\begin{proof}
The proof of \cite[Theorem 8.3.6]{Ho} can be adapted to the continuous setting.
\end{proof}

We say the $L$-theory $T$ has \emph{amalgamation over substructures} if
for any substructures  $\M_0$, $\M_1$ and $\M_2$ of models of $T$ and
embeddings $f_1\colon \M_0\rightarrow \M_1$, $f_2\colon \M_0\rightarrow \M_2$,
there exists a model $\n$ of $T$  and embeddings
$g_i\colon \M_i\rightarrow \n$ such that $g_1\circ f_1=g_2\circ f_2$.

\begin{proposition}
\label{modelcompanionplusamalg}
Let $T_1$ and $T_2$ be $L$-theories such that $T_2$
is the model companion of $T_1$.  Assume $T_1$ has amalgamation over substructures.
Then $T_2$ has quantifier elimination.
\end{proposition}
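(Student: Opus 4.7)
The plan is to prove quantifier elimination for $T_2$ via the standard criterion: $T_2$ has quantifier elimination if and only if any two tuples $\bar{a}\in\M_1\models T_2$ and $\bar{b}\in\M_2\models T_2$ with the same quantifier-free type also share the same complete type. This equivalence passes to continuous logic by a uniform approximation argument on the type spaces (once quantifier-free types determine complete types, each continuous function on the type space of $T_2$ factors through the quantifier-free type space and is thus uniformly approximable by quantifier-free formulas).

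To establish the criterion, fix $\bar{a}\in\M_1\models T_2$ and $\bar{b}\in\M_2\models T_2$ with $\qftp^{\M_1}(\bar{a})=\qftp^{\M_2}(\bar{b})$. Since atomic and term-level data agree on the two tuples, the map $\bar{a}\mapsto\bar{b}$ extends uniquely to an isomorphism $f\colon\mathcal{A}\to\mathcal{B}$ between the substructures $\mathcal{A}\subseteq\M_1$ and $\mathcal{B}\subseteq\M_2$ they generate. Because $(T_2)_\forall=(T_1)_\forall$ (a consequence of the first two clauses of the model-companion definition), both $\mathcal{A}$ and $\mathcal{B}$ are substructures of models of $T_1$. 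Applying the amalgamation property of $T_1$ over substructures, with $\M_0=\mathcal{A}$, the inclusion $\mathcal{A}\hookrightarrow\M_1$, and the composition $\mathcal{A}\xrightarrow{f}\mathcal{B}\hookrightarrow\M_2$, produces a model $\n\models T_1$ and embeddings $g_i\colon\M_i\to\n$ with $g_1(\bar{a})=g_2(\bar{b})$. The model-companion property then lets us embed $\n$ into some $\n'\models T_2$; the resulting embeddings $\M_1\hookrightarrow\n'$ and $\M_2\hookrightarrow\n'$ are between models of $T_2$, hence elementary by the model completeness of $T_2$ (Proposition~\ref{allec}). Therefore, for every $L$-formula $\phi(\bar{x})$,
$$\phi^{\M_1}(\bar{a})=\phi^{\n'}(g_1(\bar{a}))=\phi^{\n'}(g_2(\bar{b}))=\phi^{\M_2}(\bar{b}),$$
which gives $\tp^{\M_1}(\bar{a})=\tp^{\M_2}(\bar{b})$.

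The main step requiring care is the preliminary QE criterion in continuous logic: one must verify that the substructure generated by a tuple is a well-defined (closed) substructure of the ambient metric structure, that equality of quantifier-free types produces a genuine isomorphism between the generated substructures, and that the continuous-logic analogue of Stone--Weierstrass on type spaces converts type-determination into uniform formula approximation. Once that criterion is in hand, the core argument above is a routine combination of amalgamation, the model-companion embedding property, and model completeness, essentially following the classical proof (as in \cite[Theorem 8.3.1]{Ho}) adapted to the metric setting.
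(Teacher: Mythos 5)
Your argument is correct and is precisely the adaptation of the classical proof (the equivalence of (a) and (d) in \cite[Theorem 8.4.1]{Ho}) that the paper invokes without writing out: amalgamate the two models over the substructure generated by the tuples, embed the amalgam into a model of $T_2$, and use model completeness to transfer full types, then convert ``quantifier-free types determine complete types'' into quantifier elimination via uniform approximation on the type spaces. No gaps; the points you flag as needing care (generated substructures, the Stone--Weierstrass step) are exactly the right ones and are handled correctly.
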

\begin{proof}
The corresponding result in classical first order logic is the
equivalence of (a) and (d) in \cite[Theorem 8.4.1]{Ho}.  The proof
given there can be adapted to the continuous setting.
\end{proof}

% bddRtrees23-02=456Rtrees.tex
% Rtrees, their theory, amalgamation

% last edited 1-23-2021 by cwh

% this is a final version for arxiv.org, implementing most of the revisions that were
% made in the JLA publication process.

% !TEX root = ./bddRtrees23-master.tex

%%%%%%%%%%%%%%%%%%%%%%%%%%%%%%%%%%%%%%%%%%%%%%

\section{The theory of pointed \texorpdfstring{$\R$}{R}-trees with radius at most \texorpdfstring{$r$}{r}}
\label{theory of R-trees}

By the \emph{radius} of a pointed metric space $(M,p)$ we mean the
supremum of the distance $d(p,x)$ as $x$ varies over $M$, and when using the term
we always require that this expression is finite.

In this section we first present the continuous
signature used in this paper to study $\R$-trees.  We then give
axioms for the theory $\RT_r$ of $\R$-trees with radius $\leq r$.

Let $r>0$ be a real number. Define the signature $L_r:=\{p\}$ where $p$ is a 
constant symbol and specify
that the metric symbol $d$ has values which lie in the interval $[0,2r]$.
Any pointed metric space $(M, p)$ with radius $\leq r$ naturally
gives rise to an $L_r$-prestructure $\M=(M,d,p)$, in which $d$ is a metric;
$\M$ is an $L_r$-structure if the metric space involved is
metrically complete.

\begin{remark}
\label{endpoints at distance r}
For future reference, we note: if $M$ is a pointed $\R$-tree of radius at most $r$ and if $x \in M$ satisfies
$d(p,x) = r$, then $x$ is an endpoint in $M$.  (See Definition \ref{D: endpoint}.)  Indeed, if $x$ is not an 
endpoint then we can let $y \in M$ be an element of a branch at $x$ that does not contain $p$.  But
then we would have $d(p,y) = d(p,x)+d(x,y) > r$.
\end{remark}

Next we define a set of axioms $\RT_r$ for the class of complete,
pointed $\R$-trees of radius $\leq r$. 
Recall the connective
$\dotminus\colon [0,\infty)\times[0,\infty) \rightarrow [0,\infty)$
defined by $x\dotminus y=\max\{x-y,0\}$.

\begin{definition}
\label{rtreetheory}
Let $\RT_r$ be the $L_r$-theory consisting of the following conditions:
\begin{enumerate}

    \item \label{boundax}
    \hfil$\sup_{x} d(x,p) \leq r$;\vspace{0.25cm}

    \item \label{midptax} 
    \hfil{$\sup_{x}\sup_{y}\inf_{z}\max\{
    \big|d(x,z)-\frac12 d(x,y) \big|,\ \ \big| d(y,z)-\frac12 d(x,y) \big|\}=0$;}\vspace{0.25cm}

    \item \label{hypax} 
    \hfil{$\sup_{x}\sup_{y}\sup_{z}\sup_{w}
    \big( \min\{(x\cdot z)_w, (y\cdot z)_w\}\dotminus (x\cdot y)_w\big)=0$.}

\end{enumerate}
\end{definition}

Condition (2) formalizes the approximate midpoint property.
In reading (3), recall that $(x\cdot y)_w$ denotes the Gromov product (see Definition \ref{gromovproduct}),
which is given by an explicit formula $\phi(x,y,w)$ in the signature $L_r$.

\smallskip
The next lemma shows that the class of complete pointed $\R$-trees of
radius $\leq r$ is axiomatized by $\RT_r$.

\begin{lemma}
\label{modelsofT}
The models of $\RT_r$ are exactly the complete, pointed $\R$-trees of radius $\leq r$. 
\end{lemma}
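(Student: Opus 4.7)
I plan to prove both directions of the equivalence; the nontrivial direction is that a model of $\RT_r$ is an $\R$-tree.

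(Easy direction.) Suppose $(M,d,p)$ is a complete pointed $\R$-tree with radius $\leq r$. Axiom \eqref{boundax} holds by definition of radius. For axiom \eqref{midptax}, observe that an $\R$-tree is geodesic (each pair of points is joined by a geodesic segment), so Fact \ref{midptfact} provides exact midpoints and the $\sup\sup\inf$ condition evaluates to $0$. Axiom \eqref{hypax} is precisely $0$-hyperbolicity with the parameter $\delta$ set equal to $0$, and this holds in every $\R$-tree by Lemma \ref{0hyp}.

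(Hard direction.) Suppose $\M \models \RT_r$. Since $\M$ is an $L_r$-structure, $(M,d)$ is complete, and axiom \eqref{boundax} says its radius is $\leq r$. It remains to show $M$ is an $\R$-tree. First I upgrade the approximate midpoints of axiom \eqref{midptax} to exact midpoints. Given $x, y \in M$, for each positive integer $n$ axiom \eqref{midptax} produces $z_n \in M$ with $|d(x, z_n) - \tfrac12 d(x,y)| < 1/n$ and $|d(y, z_n) - \tfrac12 d(x,y)| < 1/n$. Using the $0$-hyperbolicity of axiom \eqref{hypax} based at $x$, I have
\[
(z_n \cdot z_m)_x \;\geq\; \min\{(z_n \cdot y)_x,\, (z_m \cdot y)_x\}.
\]
A direct calculation shows $(z_k \cdot y)_x \geq \tfrac12 d(x,y) - 1/k$, while expanding the Gromov product on the left gives
\[
(z_n \cdot z_m)_x \;\leq\; \tfrac12\bigl(d(x,y) + \tfrac1n + \tfrac1m - d(z_n, z_m)\bigr).
\]
Combining these yields $d(z_n, z_m) \leq 4/\min(n,m)$, so $(z_n)$ is Cauchy. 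By completeness it converges to a point $z$, which by continuity of $d$ is an exact midpoint of $x$ and $y$. Hence by Fact \ref{midptfact}, $M$ is a geodesic metric space.

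Axiom \eqref{hypax} says $M$ is $0$-hyperbolic, so by Lemma \ref{0hyp} there is an isometric embedding $\iota \colon M \hookrightarrow T$ into an $\R$-tree $T$. Given $a, b \in M$, any geodesic segment from $a$ to $b$ in $M$ maps under $\iota$ to a geodesic segment from $\iota(a)$ to $\iota(b)$ in $T$; by uniqueness of arcs in the $\R$-tree $T$, there is only one such segment, so geodesics between $a$ and $b$ in $M$ are unique. Moreover any arc from $a$ to $b$ in $M$ pushes forward to an arc from $\iota(a)$ to $\iota(b)$ in $T$, which must equal the unique geodesic; hence the unique arc in $M$ from $a$ to $b$ exists and is a geodesic segment, so $M$ is an $\R$-tree.

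The main obstacle is the passage from approximate to exact midpoints, since the $\inf$ in axiom \eqref{midptax} need not be attained in $\M$. The Gromov-product computation above, which uses both axioms \eqref{midptax} and \eqref{hypax} together with completeness, is the key technical step; everything else reduces to citing Fact \ref{midptfact}, Lemma \ref{0hyp}, and unpacking definitions.
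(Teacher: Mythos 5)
Your proof is correct, but in the hard direction it takes a genuinely different route from the paper's. The paper iterates the approximate-midpoint axiom to define a map on the dyadic rationals of $[0,1]$, tunes the error terms so that the map is uniformly continuous, and completes it to a path from $x$ to $y$; having established path-connectedness, it then cites an external result of Chiswell (\cite[Lemma 2.4.13]{C}) that a connected $0$-hyperbolic metric space is an $\R$-tree. You instead upgrade approximate midpoints to an exact midpoint by showing the sequence $(z_n)$ is Cauchy --- your Gromov-product estimate $d(z_n,z_m)\leq 4/\min(n,m)$ checks out --- and then invoke Fact \ref{midptfact} to get that $M$ is geodesic, followed by Lemma \ref{0hyp} to embed $M$ isometrically into an $\R$-tree and pull back uniqueness of arcs. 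Your version buys two things: it stays entirely within results already stated in the paper (rather than reaching for an additional lemma from \cite{C}), and it replaces the somewhat delicate ``let $\epsilon$ approach $0$ fast enough'' dyadic construction with a clean completeness argument; it is also a nice illustration that the hyperbolicity axiom does real work in forcing approximate midpoints to cluster. The paper's version is shorter on the page because it outsources the final step, and it does not need the hyperbolicity axiom to produce the connecting path. Both are valid; yours is arguably the more self-contained and careful write-up.
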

\begin{proof}
First we assume $\M\models\RT_r$.  Then $(M, d,p)$ is a complete, pointed
metric space. Axiom (\ref{boundax}) guarantees that $M$ has radius $\leq r$. 
Axiom \ref{hypax} implies that $M$ is $0$-hyperbolic.

Axiom (\ref{midptax}) implies that for any $x,y\in M$ and
any $\epsilon >0$ there is $z\in M$ such that $d(x,z)$ and $d(y,z)$ are within $\epsilon$
of $d(x,y)/2$. We show that in a complete metric space that is $0$-hyperbolic,
this implies the exact midpoint property.  (See Fact \ref{midptfact}.)
Given $x,y \in M$, for each $n$ let $z_n \in M$
be such that $d(x,z)$ and $d(y,z)$ are within $1/n$
of $d(x,y)/2$.   Applying the $4$-point condition (see Lemma \ref{four point condition})
we have
\[
d(z_m,z_n) + d(x,y) \leq \max \big[ d(z_m,x) + d(z_n,y), d(z_m,y)+d(z_n,x) \big]
\]
\[
\leq \max \big[ d(x,y) + \frac1m +\frac1n, d(x,y) +\frac1m +\frac1n] = d(x,y) +\frac1m +\frac1n
\]
from which we get $d(z_m,z_n) \leq 1/m + 1/n $ for all $m,n$.  Therefore $(z_n)$
converges in $M$ to an exact midpoint between $x$ and $y$.

Therefore, by Lemma \ref{0hyp}(1), $\M$ is a pointed $\R$-tree with radius $\leq r$.

That a complete, pointed $\R$-tree with radius $\leq r$ is a model of $\RT_r$ is clear.
\end{proof}

\begin{remark}
Structures in continuous logic are required to be metrically complete, while
in general, $\R$-trees are not complete.
A pointed $\R$-tree $M$ with radius $\leq r$ can naturally be viewed as an
$L_r$-prestructure, which is an $L_r$-structure iff it is complete (since the
pseudometric on the prestructure is actually a metric). If $M$ is not complete,
then its metric completion is known to be an $\R$-tree (see Chiswell \cite[Lemma 2.4.14]{C}).
Further, the completion of a prestructure is known to be an elementary extension,
and therefore the prestructure and its completion are completely equivalent from a 
model-theoretic perspective. (See 
Ben Yaacov, Berenstein, Henson and Usvyatsov \cite[pages 15--17]{N}.)
Note that this also means any two pointed $\R$-trees of radius $\leq r$ that have the
same metric completion are indistinguishable from a model-theoretic
perspective, and that a metrically complete $\R$-tree can be identified model-theoretically 
with any of its dense sub-prestructures.  (However, those metric sub-prestructures are not
necessarily $\R$-trees. In particular, they are not necessarily geodesic spaces.)
\end{remark}

We close this section by noting a property of $\RT_r$ that will be used later.

\begin{lemma}
\label{chainofmodels}
The theory $\RT_r$ is inductive. That is, the completion of the union
of an arbitrary chain of models of $\RT_r$ is a model of $\RT_r$.
\end{lemma}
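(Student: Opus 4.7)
Given a chain $(\M_\lambda \mid \lambda \in \Lambda)$ of models of $\RT_r$, the plan is to form the union $M := \bigcup_\lambda M_\lambda$ with its inherited metric and distinguished point $p$, viewing it as an $L_r$-prestructure $\M$. Since the completion of a prestructure is an elementary extension of it (see \cite[Theorem 3.7]{N}, as recalled earlier in the paper), it then suffices to verify that each of the three axioms of Definition \ref{rtreetheory} already holds in $\M$.

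For the universal axioms (\ref{boundax}) and (\ref{hypax}), the argument is immediate: each is a $\sup$-formula in at most four free variables, and because $\Lambda$ is linearly ordered, any finite set of elements of $M$ lies entirely within a single $M_\lambda$, where the condition holds exactly by hypothesis. Hence each condition passes to $\M$.

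The substantive step is the midpoint axiom (\ref{midptax}), which carries an interior $\inf_z$. Given $x, y \in M$, I would choose $\lambda$ with $x, y \in M_\lambda$; by Lemma \ref{modelsofT}, $\M_\lambda$ is a complete pointed $\R$-tree and hence a geodesic metric space, so Fact \ref{midptfact} provides an exact midpoint $z \in M_\lambda \subseteq M$ of $x$ and $y$. This $z$ makes the expression under the infimum equal to $0$, so the infimum is $0$ for every pair $x, y$; taking the outer suprema gives (\ref{midptax}) in $\M$.

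The only genuinely delicate point in the argument is passing from $\M$ back up to its metric completion, where midpoints of limit pairs need not be witnessed by elements of $M$ itself; invoking the elementarity of completion bypasses this entirely, and avoids having to show directly that midpoint-witnesses are Cauchy or converge in the completed space.
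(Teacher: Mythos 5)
Your proof is correct and coincides with the paper's own (alternative) argument: the paper observes that $\RT_r$ is an $\forall\exists$-theory and hence its model class is closed under completions of unions of chains, which is exactly what you verify in detail --- the universal axioms pass to the union because any finitely many points lie in a single link of the chain, the interior $\inf_z$ in the midpoint axiom is witnessed exactly in that link since each $\M_\lambda$ is a complete $\R$-tree and hence geodesic, and elementarity of the completion finishes the argument. The paper's primary proof is instead geometric (the union of a chain of pointed $\R$-trees is a pointed $\R$-tree, and the completion of an $\R$-tree is an $\R$-tree), but your syntactic route is equally valid and fully rigorous.
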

\begin{proof}
The proof of Chiswell \cite[Lemma 2.1.14]{C} can be modified to show that the
union of an arbitrary chain of pointed $\R$-trees is again a pointed $\R$-tree.
Also, the completion of an $\R$-tree is an $\R$-tree.  (See \cite[Lemma 2.4.14]{C}.)
Since base points are preserved by embeddings
of models, the radius of the underlying pointed $\R$-tree for the union of a chain is most $r$. 

Alternatively, note that $\RT_r$ is an $\forall \exists$-theory, and therefore the
class of its models is closed under completions of unions of chains.
\end{proof}

\section{Some definability in \texorpdfstring{$\RT_r$}{RTr}}
\label{definable}

We now discuss the notion of \emph{definability} for subsets of and functions
on the underlying $\R$-tree $M$ of a model $\M$ of $\RT_r$.  For background
on definable predicates, sets and functions see 
Ben Yaacov, Berenstein, Henson and Usvyatsov \cite{N}.
The first result shows that every closed ball centered at the base point
is uniformly quantifier-free $0$-definable in models of $\RT_r$.

\begin{lemma}
\label{geodballs}
Let $s\in [0,r]$. Let $\phi(x)$ be the quantifier-free formula $d(x,p)\dotminus s$.  Suppose $\M\models \RT_r$ and 
$(M,d,p)$ is the underlying $\R$-tree of $\M$.  Evidently the zeroset of $\phi(x)^\M$ is equal to the closed
ball $B_s(p)$ of radius $s$ centered at $p$ in $M$.  
Then for all $x\in M$ we have $\dist(x, B_s(p))=\phi(x)^\M$.

Therefore, the closed ball $B_s(p)\subseteq M$ is uniformly $0$-definable with respect to the theory $\RT_r$.
\end{lemma}
\begin{proof}
It suffices to show that $\phi^\M(x)$ is equal to the distance function
$\dist(x,B_s(p))$.
For $x\in M$ we know $\phi^\M(x)=0$ if and only if $d(x,p)\leq s$,
{ie}, if and only if $x\in B_s(p)$.
Now, let $x\notin B_s(p)$. Then $\phi^\M(x)=d(x,p)-s$.
Let $\gamma$ be a geodesic segment from $p$ to $x$ with $\gamma(0)=p$.
Then
\[
\dist(x,B_s(p))\leq d(x,\gamma(s))=d(x,p)-d(p, \gamma(s))=d(x,p)-s=\phi^\M(x) \text{.}
\]
Now toward a contradiction, assume $d(x,p)-s >\dist(x, B_s(p)).$
Then there exists a point in $c\in B_s(p)$ with $d(c,x)<d(x,p)-s$.  This implies,
\[
d(x,p)\leq d(p,c)+d(c,x)\leq s+d(c,x)<s+d(x,p)-s=d(x,p)
\]
so $d(x,p)<d(x,p)$, which is a contradiction. Therefore, $\dist(x, B_s(p))=\phi(x)^\M.$
\end{proof}

Note that the preceding argument only needs that the underlying metric space 
$M$ is a geodesic space.  

\smallskip
Next, we present a short discussion of some specific definable functions,
points and sets in models of $\RT_r$.

Let $M$ be an $\R$-tree and for $s\in [0,1]$ define the function
$\nu_s\colon M\times M\rightarrow M$ by:
$\nu_s(x_1,x_2)=$ the point in $[x_1,x_2]$ with distance $sd(x_1, x_2)$ from $x_1$
and distance $(1-s)d(x_1, x_2)$ from $x_2$. 

\begin{lemma}
\label{definablemidpoints}
Let $s\in [0,1]$. The function $\nu_s\colon M\times M \rightarrow M$
is uniformly $0$-definable in models $\M$ of $\RT_r$ via a quantifier-free formula.
\end{lemma}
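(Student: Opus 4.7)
The plan is to exhibit a quantifier-free $L_r$-formula whose interpretation in any model $\M \models \RT_r$ computes the distance to $\nu_s(x_1,x_2)$ exactly. I would take the candidate
\[
\phi_s(x_1,x_2,y) := \max\bigl\{\,|d(y,x_1)-s\,d(x_1,x_2)|,\ |d(y,x_2)-(1-s)\,d(x_1,x_2)|\,\bigr\},
\]
and prove the identity $\phi_s^\M(x_1,x_2,y) = d(y,\nu_s(x_1,x_2))$ for all $x_1,x_2,y$ in any $\M \models \RT_r$. By the standard framework for continuous definability recalled earlier, this identity is precisely what it means for the function $\nu_s$ to be uniformly $0$-definable via $\phi_s$; in particular it gives both that the graph of $\nu_s$ is the zeroset of $\phi_s$ and that the formula controls the distance to the graph uniformly.

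The key computation uses the closest-point projection available in an $\R$-tree. Fix $x_1, x_2, y \in M$ and set $E = [x_1,x_2]$, which is a closed subtree of $M$ by Lemma \ref{modelsofT}. By Lemma \ref{!geo} and the remark following it, there is a unique point $e \in E$ closest to $y$, and moreover $e$ lies on the geodesic $[y,x_i]$ for $i = 1,2$, as well as on $[y,z]$, where $z := \nu_s(x_1,x_2)$. Writing $a := d(y,e)$ and $t := d(e,x_1)$ (so that $d(e,x_2) = d(x_1,x_2) - t$ and $d(e,z) = |t - s\,d(x_1,x_2)|$), I would derive
\[
d(y,x_1) = a+t, \qquad d(y,x_2) = a + d(x_1,x_2) - t, \qquad d(y,z) = a + |t - s\,d(x_1,x_2)|.
\]
Setting $b := t - s\,d(x_1,x_2)$ and substituting, one obtains $d(y,x_1) - s\,d(x_1,x_2) = a+b$ and $d(y,x_2) - (1-s)\,d(x_1,x_2) = a-b$. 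Since $a \geq 0$, $\max\{|a+b|,|a-b|\} = a + |b|$, and this equals $d(y,z)$, proving the identity. The degenerate case $x_1 = x_2$ is immediate, as both sides reduce to $d(y,x_1)$.

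The main technical step is not deep but must be executed carefully: one needs the exact distance $d(y,z)$ rather than a mere upper or lower bound, and this is what forces the use of the sharp closest-point projection property specific to $\R$-trees (as opposed to the projection merely being a non-expanding retraction, which is all one gets in a general CAT($0$) space). Once those projection identities are in place, only an elementary algebraic manipulation remains, and no quantifiers are introduced, so the formula $\phi_s$ is genuinely quantifier-free.
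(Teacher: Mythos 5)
Your proof is correct and follows essentially the same approach as the paper: the paper's proof exhibits the quantifier-free formula $\max\{d(x_1,y)\dotminus s\,d(x_1,x_2),\ d(x_2,y)\dotminus (1-s)\,d(x_1,x_2)\}$ and simply asserts that it computes $d(y,\nu_s(x_1,x_2))$, which agrees with your formula since both evaluate to $a+|b|$ in your notation. Your version with absolute values in place of truncated subtraction is an equally valid quantifier-free formula, and your closest-point-projection computation supplies exactly the verification the paper leaves implicit.
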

\begin{proof}
Let $\psi$ be the formula
$$\max\{d(x_1,y)\dotminus sd(x_1,x_2), d(x_2,y)\dotminus (1-s)d(x_1,x_2)\} \text{.} $$
Let $\M\models \RT_r.$
In $\M$, the distance $d(\nu_s(x_1,x_2),y)$ is equal to $\psi^\M(x_1,x_2,y).$
So the function $\nu_s$ is $0$-definable via this quantifier-free formula in any model of $\RT_r$.
\end{proof}

In Definition \ref{Ypoint} we defined $Y=Y(a,b,c)$, the unique point so that
$\{Y\}=[a,b]\cap[b,c]\cap[a,c]$. Next we show that $Y$ is a definable function.

\begin{theorem}
The function $Y\colon M^3\rightarrow M$ 
that for inputs $x_1, x_2, x_3\in M$ returns $Y(x_1, x_2, x_3)$ is uniformly
$0$-definable in models $\M$ of $\RT_r$ via a quantifier-free formula.
\end{theorem}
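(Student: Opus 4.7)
The approach is to exhibit an explicit quantifier-free formula $\psi(x_1, x_2, x_3, y)$ such that, in every $\M \models \RT_r$, its interpretation agrees with $d(Y(x_1, x_2, x_3), y)$ for every $y \in M$. The candidate is
\[
\psi(x_1, x_2, x_3, y) := \max\bigl\{\, d(x_1, y) \dotminus (x_2 \cdot x_3)_{x_1},\ \ d(x_2, y) \dotminus (x_1 \cdot x_3)_{x_2} \,\bigr\},
\]
where each Gromov product is expanded as a quantifier-free term in basic distances; for example, $(x_2 \cdot x_3)_{x_1} = \tfrac{1}{2}\bigl[(d(x_1, x_2) + d(x_1, x_3)) \dotminus d(x_2, x_3)\bigr]$.

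The motivation comes directly from the Gromov-product characterization of $Y := Y(x_1, x_2, x_3)$. Since $Y$ lies on each of the three segments $[x_i, x_j]$, Lemma \ref{branch}(2) supplies three equalities $d(x_i, Y) + d(Y, x_j) = d(x_i, x_j)$. Solving this small linear system yields $d(x_1, Y) = (x_2 \cdot x_3)_{x_1}$ and $d(x_2, Y) = (x_1 \cdot x_3)_{x_2}$, and in particular these two quantities sum to $d(x_1, x_2)$. Thus $\psi$ has exactly the form of the formula supplied by Lemma \ref{definablemidpoints}, but applied to the unique point of $[x_1, x_2]$ at the prescribed distances from the endpoints — which is $Y$ itself.

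The verification $\psi^\M(\bar x, y) = d(Y, y)$ then proceeds as in the proof of Lemma \ref{definablemidpoints}. Let $y' \in [x_1, x_2]$ be the unique closest point in that subtree to $y$; by Lemma \ref{!geo} and the remark following it, the geodesic $[y, z]$ passes through $y'$ for every $z \in [x_1, x_2]$, so $d(y, z) = d(y, y') + d(y', z)$. A brief case analysis on whether $y' \in [x_1, Y]$ or $y' \in [Y, x_2]$ shows that in each case one of the two terms in the maximum evaluates to $d(y, y') + d(y', Y) = d(y, Y)$ while the other is bounded above by this quantity, so the maximum equals $d(y, Y)$ as required. Degenerate cases such as $x_1 = x_2$ are handled directly: the relevant Gromov products vanish and $\psi$ reduces to $d(x_1, y) = d(Y, y)$.

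I do not foresee any substantive obstacle. The argument uses only the additive distance structure on $\R$-trees (Lemmas \ref{branch} and \ref{!geo}), and the formula $\psi$ is manifestly quantifier-free and uniform across all models of $\RT_r$.
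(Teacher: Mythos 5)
Your proposal is correct, and the formula you exhibit does work, but it is genuinely different from the one in the paper. The paper's proof uses the symmetric formula $\max\{(x_1\cdot x_2)_x,\ (x_1\cdot x_3)_x,\ (x_2\cdot x_3)_x\}$, i.e.\ the maximum of the distances from the query point $x$ to the three sides of the tripod spanned by $x_1,x_2,x_3$; since $Y$ is the unique point common to all three segments, one shows $d(x,Y)$ dominates each of the three Gromov products and coincides with (at least) one of them, by analyzing the closest point to $x$ in the spanned subtree. You instead locate $Y$ as the point of the single segment $[x_1,x_2]$ at distances $(x_2\cdot x_3)_{x_1}$ and $(x_1\cdot x_3)_{x_2}$ from its endpoints (these sum to $d(x_1,x_2)$, as you verify), and then reuse the $\nu_s$-style formula of Lemma \ref{definablemidpoints} with the constant $s\,d(x_1,x_2)$ replaced by the quantifier-free Gromov-product term. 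Your verification via the closest point $y'$ on $[x_1,x_2]$ and the two cases $y'\in[x_1,Y]$, $y'\in[Y,x_2]$ is sound, the use of $\dotminus$ is needed and correctly placed, and the degenerate cases check out (e.g.\ if $x_1=x_3$ the second term is dominated by the first via the triangle inequality). What your approach buys is an explicit reduction to the already-established midpoint/convex-combination formula and a two-term rather than three-term maximum; what it gives up is the symmetry of the paper's formula in $x_1,x_2,x_3$, which makes the paper's version slightly more transparent geometrically. Either formula establishes uniform quantifier-free $0$-definability of $Y$ in all models of $\RT_r$.
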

\begin{proof}
Recall that in an $\R$-tree, the Gromov
product $(a \cdot b)_x$ is equal to the distance from $x$ to the segment $[a,b]$,
and this distance is realized by a unique closest point on $[a,b]$.  Let
\[
\phi(x_1, x_2, x_3, x)=\max\left\{(x_1\cdot x_2)_x, (x_1\cdot x_3)_x, (x_2\cdot  x_3)_x\right\} \text{.}
\]
Let $\M\models \RT_r$ and $x_1, x_2, x_3\in M$. We will show that in $\M$, 
\[
d( x, Y(x_1, x_2, x_3))=\phi^\M(x_1, x_2, x_3, x) \text{.}
\]
Let $E$ be the closed subtree of $M$ spanned by $x_1, x_2$ and $x_3$. Abbreviate 
$Y(x_1, x_2, x_3)$ by $Y$.
For $x\in M$, let $z$ be the unique point closest to $x$ in $E$. Then we have $d(x, Y)=d(x, z)+d(z, Y)$. 
The point $z$ must lie on at least one of $[x_1, Y]$, $[x_2, Y]$ or $[x_3, Y]$.
Without loss of generality, assume $z\in [x_1, Y]=[x_1, x_3]\cap[x_1, x_2]$. 

Since $z$ is closest in $E$ to $x$  and $z\in [x_1, x_3]\cap[x_1, x_2]$ we know 
$d(x, z)=(x_1\cdot x_2)_x=(x_1\cdot x_3)_x$.
That $z$ is closest in $E$ to $x$ also implies $(x_2\cdot x_3)_x\geq d(x,z)$. 
This makes $(x_2\cdot x_3)_x\geq(x_1\cdot x_2)_x=(x_1\cdot x_3)_x$.
Thus, $\phi^{\M}(x_1, x_2, x_3, x)=(x_2\cdot x_3)_x$.

Since $z\in [x_1, Y]$, the point $Y$ is the closest point to $z$ on $[x_2, x_3]$, and 
therefore $Y$ is the closest point to $x$ on $[x_2, x_3].$ It follows that, $d(x, Y)=(x_2\cdot x_3)_x$.
We conclude that $\phi^{\M}(x_1, x_2, x_3, x)=d(x,Y).$
\end{proof}

\section{Amalgamation} 

Next, we discuss amalgamation for the $L_r$-theory $\RT_r$.
The following result from Chiswell \cite{C} discussing amalgamating over points in $\R$-trees 
is then extended to amalgamation of $\R$-trees over subtrees. This leads to a 
proof of amalgamation over substructures for $\RT_r$.

\begin{lemma}
\label{chiswell add to points}
Let $(X,d)$ be an $\R$-tree and let $\{(X_i, d_i)\mid i\in I\}$ be a family of $\R$-trees
such that
$X_i\cap X=\{x_i\}$ for all $i\in I$.
$X_i\cap X_j= \{x_i\}$ if $x_i=x_j$ and $X_i\cap X_j=\emptyset$ otherwise.
Define $N=(\bigcup_{i\in I} X_i)\cup X$ and define $\hat d\colon N\times N \rightarrow \R$ by 
\begin{itemize}
\item On $X\times X$, $\hat d=d$, and on $X_i\times X_i$, $\hat d=d_i$.
\item If $x\in X_i$ and $x'\in X_j$, with $i\not= j$, then
\[
\hat d(x,x')=\hat d(x',x)=d_i(x,x_i)+d(x_i, x_j)+d_j(x_j, x') \text{.}
\]
\item If $x\in X_i$ and $x'\in X$, then $\hat d(x,x')=\hat d(x',x)=d_i(x,x_i)+d(x_i, x')$.
\end{itemize}
Then $(N,\hat d)$ is an $\R$-tree.
\end{lemma}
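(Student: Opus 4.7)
The plan is to verify that $\hat d$ is a metric on $N$ and then exhibit a unique geodesic arc between each pair of points; granting these two facts, $N$ is an $\R$-tree by definition. Symmetry of $\hat d$ is built into the formulas, non-negativity is inherited from the underlying metrics, and $\hat d(x,y)=0 \Rightarrow x=y$ follows from the corresponding property in each $X_i$ and in $X$ together with the intersection hypotheses $X_i \cap X = \{x_i\}$ and $X_i \cap X_j \subseteq \{x_i\}$. The triangle inequality is the only nontrivial clause.

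To handle the triangle inequality cleanly, I would introduce a retraction $\rho \colon N \to X$ defined by $\rho(a)=x_i$ if $a \in X_i \setminus X$ and $\rho(a)=a$ if $a \in X$, together with a ``height'' $\ell(a)=d_i(a,x_i)$ if $a \in X_i \setminus X$ and $\ell(a)=0$ otherwise. The defining formulas then amount to
\[
  \hat d(a,b)=\ell(a)+d(\rho(a),\rho(b))+\ell(b)
\]
whenever $a,b$ do not lie in a common $X_i\setminus X$, and $\hat d(a,b)=d_i(a,b)$ when they do. Now, given $a,b,c \in N$, I would split into cases according to which of the trees $X,X_i,X_j,X_k$ contain each point. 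In each case, the inequality reduces either to the triangle inequality in a single $X_i$ (when two of the points share a branch) or to the triangle inequality in $X$ applied to $\rho(a),\rho(b),\rho(c)$, augmented by the non-negativity of the height terms $\ell(a),\ell(b),\ell(c)$. The intuition is that any ``excursion'' through a branch $X_i$ costs an extra $2 d_i(b,x_i)\geq 0$ on the right-hand side.

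For the geodesic structure, given $a,b \in N$ I would construct an explicit arc $\gamma$ by concatenating: a geodesic in $X_i$ from $a$ to $x_i$ (if $a \in X_i\setminus X$), a geodesic in $X$ from $\rho(a)$ to $\rho(b)$, and a geodesic in $X_j$ from $x_j$ to $b$ (if $b \in X_j\setminus X$). The disjointness hypotheses guarantee that these pieces meet only at $x_i$ and $x_j$, so $\gamma$ is an injective continuous path, and its total length equals $\hat d(a,b)$ by the formula above. For uniqueness, the key observation is that each attachment point is a cut point: the intersection hypotheses imply that $X_i \setminus \{x_i\}$ is clopen in $N\setminus\{x_i\}$, so any arc from a point in $X_i \setminus \{x_i\}$ to a point outside $X_i$ must pass through $x_i$. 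Splitting an arbitrary arc from $a$ to $b$ at the relevant cut points $x_i,x_j$ yields sub-arcs lying entirely in one of the constituent $\R$-trees, and uniqueness in each constituent forces the sub-arcs to coincide with those of $\gamma$.

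The main obstacle I anticipate is the bookkeeping for the triangle-inequality case analysis, particularly in degenerate configurations where several attachment points collide ($x_i=x_j$ with $i\neq j$) or where one of $a,b,c$ equals an attachment point and thus lies in the overlap of $X$ with some $X_i$. In each such case, the retraction/height formalism reduces the verification to a triangle inequality inside a single constituent $\R$-tree, so the argument remains conceptually uniform even though the number of subcases is large.
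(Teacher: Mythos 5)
The paper does not actually prove this lemma: its ``proof'' is a one-line citation to Chiswell \cite[Lemma~2.1.13]{C}, so your direct argument is necessarily a different route. Your outline is correct and matches the paper's definition of an $\R$-tree (unique arcs, which are geodesics), and the retraction/height formalism $\hat d(a,b)=\ell(a)+d(\rho(a),\rho(b))+\ell(b)$ is a clean way to organize both the well-definedness of $\hat d$ on the overlaps $X_i\cap X=\{x_i\}$ and the triangle-inequality case analysis; every case does reduce to a triangle inequality in one constituent tree plus nonnegative height terms, as you say. The one place where the sketch understates the work is the uniqueness step: after cutting an arbitrary arc at $x_i$ and $x_j$, the claim that the middle sub-arc lies entirely in $X$ needs an argument that the arc cannot make an excursion into some other $X_k\setminus\{x_k\}$ --- this follows from your clopen-ness observation (the closure of $X_k\setminus\{x_k\}$ in $N$ adds only $x_k$, so any excursion would enter and leave through $x_k$, contradicting injectivity of the arc), but it should be said explicitly, and the openness of each $X_k\setminus\{x_k\}$ in $N$ should be derived from the inequality $\hat d(a,b)\geq \ell(a)$ for $b\notin X_k$. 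With those details filled in, your proof is complete and self-contained, which is arguably preferable to the paper's bare citation; Chiswell's own proof proceeds by verifying the axioms for $\Lambda$-trees rather than producing arcs directly, so your argument is also somewhat more elementary for a reader who only has the ``unique arc'' definition in hand.
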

\begin{proof}
See \cite[Lemma 2.1.13]{C}
\end{proof}

We next apply this lemma to prove $\R$-trees can be amalgamated over subtrees.

\begin{lemma}
\label{rtreeamalg}
Given $\R$-trees $M_0$, $M_1$ and $M_2$ and isometric embeddings
$f_i\colon M_0\rightarrow M_i$ for $i=1, 2$,  there exists an $\R$-tree $N$
and isometric embeddings $g_i\colon M_i\rightarrow N$ such that $g_1\circ f_1=g_2\circ f_2$.
\end{lemma}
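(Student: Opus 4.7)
After replacing $M_0$ with its image, we may assume $f_1, f_2$ are inclusions and $M_0$ is a common sub-$\R$-tree of $M_1$ and $M_2$. Since the completion of an $\R$-tree is an $\R$-tree, we may further pass to completions and assume each of $M_0, M_1, M_2$ is complete; in particular, $M_0$ is closed in each $M_i$, so the nearest-point projection $\pi_i \colon M_i \to M_0$ is a well-defined single-valued function by the consequence of Lemma \ref{!geo} recorded immediately after its statement.

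The idea is to decompose each $M_i$ into $M_0$ plus a family of sub-$\R$-trees attached at single points, and then to amalgamate $M_1$ and $M_2$ by attaching all of these pieces to $M_0$ simultaneously via Lemma \ref{chiswell add to points}. For each $a \in M_0$ and $i \in \{1, 2\}$, define the fiber
\[
F_i(a) := \pi_i^{-1}(a) \subseteq M_i.
\]
I will verify that $F_i(a)$ is a sub-$\R$-tree of $M_i$ meeting $M_0$ precisely at $\{a\}$, and that $M_i$ is the union of all its fibers. After taking set-theoretic disjoint copies (so that $F_1(a)$ and $F_2(a)$ are regarded as disjoint except for the shared attaching point $a$, and fibers at distinct base points are disjoint), Lemma \ref{chiswell add to points} applied with $X = M_0$ and the family $\{F_i(a) : a \in M_0,\ i = 1, 2\}$ yields an $\R$-tree $N$ with the prescribed metric $\hat d$.

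It remains to check that the canonical set maps $g_i \colon M_i \to N$ (which fix $M_0$ pointwise and carry each fiber to its copy) are isometric embeddings. For $x, y \in M_i$ with $\pi_i(x) = \pi_i(y) = a$, both points lie in the same $X_j$ of Lemma \ref{chiswell add to points}, so $\hat d(x, y) = d_{M_i}(x, y)$ by construction; for $x, y \in M_i$ with $a := \pi_i(x) \neq \pi_i(y) =: b$, the formula in Lemma \ref{chiswell add to points} gives $\hat d(x, y) = d_{M_i}(x, a) + d_{M_0}(a, b) + d_{M_i}(b, y)$, which agrees with $d_{M_i}(x, y)$ by Lemma \ref{differentclosestpoints}. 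Since both $g_i$ restrict to the identity on $M_0$, we have $g_1 \circ f_1 = g_2 \circ f_2$ after undoing the initial identification.

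The main obstacle is the geometric verification that each fiber $F_i(a)$ is a sub-$\R$-tree of $M_i$: if $\pi_i(x) = \pi_i(y) = a$, then every $z \in [x, y]$ must satisfy $\pi_i(z) = a$. Setting $Y := Y(x, y, a)$ from Definition \ref{Ypoint}, we have $Y \in [x, a] \cap [y, a]$, so $[x, y] = [x, Y] \cup [Y, y] \subseteq [x, a] \cup [y, a]$. Thus any $z \in [x, y]$ lies on $[x, a]$ or $[y, a]$; assuming the former, $d(x, z) + d(z, a) = d(x, a)$, and for any $b \in M_0$ the relation $d(x, b) \geq d(x, a)$ (from $a = \pi_i(x)$) yields $d(z, b) \geq d(x, b) - d(x, z) \geq d(z, a)$, so $\pi_i(z) = a$. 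Once this fact is in hand, the remaining content of the proof is bookkeeping around the explicit metric formula of Lemma \ref{chiswell add to points}.
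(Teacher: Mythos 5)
Your proof is correct, and it rests on the same key tool as the paper's: decompose the trees into sub-$\R$-trees each meeting a common base tree in a single point, and invoke Lemma \ref{chiswell add to points}. The difference is in how the decomposition is organized. The paper keeps $M_1$ whole as the base tree $X$ and attaches to it the branches of $M_2$ at the points of $M_0$ (an asymmetric construction), whereas you take $M_0$ itself as the base and attach to it the nearest-point-projection fibers of \emph{both} $M_1$ and $M_2$; this requires first passing to completions so that $M_0$ is closed and the projections $\pi_i$ are defined, a step the paper does not need. Your symmetric version has the advantage of making the verification of the hypotheses of Lemma \ref{chiswell add to points} genuinely routine (fibers of a map are automatically pairwise disjoint, and you prove explicitly that each fiber is convex, hence a sub-$\R$-tree meeting $M_0$ only at its attaching point), and of reducing the isometry check to exactly Lemma \ref{differentclosestpoints}; the paper leaves the analogous verifications as ``straightforward,'' and its branch-based family in fact needs a little care, since a branch of $M_2$ at $x_k$ may contain other points of $M_0$ and must be excluded from the attached family. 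In short: same gluing lemma, a cleaner and more symmetric bookkeeping on your side, at the modest cost of the completion/closedness preliminary.
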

\begin{proof}
Without loss of generality, we may assume that $M_0$ is nonempty, that $M_0\subseteq M_1$ and $M_0\subseteq M_2$,
that $M_1\cap M_2=M_0$, and that the $f_i$ are inclusion maps.  Further, we may assume that $M_i$ is
complete for $i=0,1,2$, since the maps $f_1,f_2$ are isometric, and can thus be extended as needed.
(Recall that the completion of an $\R$-tree is again an $\R$-tree.)
Let $\{x_k\mid k\in |M_0|\}$ be a $1-1$ list of the elements of $M_0$.
For each $k\in |M_0|$ let $B_k$ be the set of branches $\beta$ at $x_k$ in $M_2$ such that $\beta \cap M_0 = \emptyset$.
Let $\alpha_k$ be the cardinality of $B_k$ (which we may assume is nonempty, by extending $M_2$ if needed),
and let $A_k=\{\beta_{kv}\mid v\in \alpha_k\}$ be a $1-1$ list of the distinct  elements of $B_k$.
Then let $I=\{(k, v)\mid k\in|M_0|, v\in \alpha_k\}$,
and define $X_i=\beta_{kv}\cup \{x_k\}$ for each $i=(k,v)\in I$.
For each $i = (k,v) \in I$, let $x_i = x_k$.  
For each $i\in I$ we consider $X_i$ as a subtree of $M_2$.  

We claim that $X = M_1$ and the family $(X_i \mid i \in I)$ satisfy the hypotheses of Lemma \ref{chiswell add to points}.
By construction, every point in $M_0$ is contained in some $X_i$.
For each $i=(k,v)\in I$ we see that $X_i \cap M_1 = \{x_k\} = \{x_i\}$ since $x_k$ is the only point in $X_i \cap M_0$ and
$X_i \cap M_1 \subseteq M_2 \cap M_1 = M_0$.  Further, given any point $y \in M_2 \setminus M_0$, we may let $x$ be the closest point
to $y$ in $M_0$ and take $k$ so that $x_k=x$; then we may take $\beta$ to be a branch
in $M_2$ at $x_k$ that contains $y$.  It follows that $\beta \cap M_0 = \emptyset$; indeed, if $x' \in \beta \cap M_0$, then
$Y(x,x',y)$ would be an element of $M_0$ that was closer to $y$ than $x$.  Thus there exists $i = (k,v) \in I$ for
which $\beta = \beta_i$ and hence $y \in X_i$.  Thus we have $M_2 = \bigcup_{i \in I} \, X_i$.

Applying Lemma \ref{chiswell add to points}, we get that 
$N=(\bigcup_{i\in I} X_i)\cup M_1$ with the metric $\hat d$ is an $\R$-tree, and 
clearly $N=M_2\cup M_1$ as metric spaces. Define $g_i$ to be the inclusion of $M_i$ in $N$. 
Then $g_1\circ f_1=g_2\circ f_2$ is clear.
\end{proof}

To apply these results to models of $\RT_r$, we begin by
proving any subset of a model of $\RT_r$ gives rise to a unique model of $\RT_r$.
In particular, any substructure extends to a unique model.

\begin{lemma}
\label{substructureofrtree}
Let $\n\models \RT_r$. Any subset $A\subseteq N$ extends to a model $\M\models \RT_r$
such that whenever $f$ is an embedding of $A\cup\{p\}$ into $\W\models \RT_r$
that satisfies $f(p)=p$, there is a unique extension of $f$ that embeds $\M$
into $\W$.
\end{lemma}
\begin{proof}
Let $(N,d,p)$ be the underlying $\R$-tree of the model $\n$, and let $A\subseteq N$.   
Define the structure $\M=(M, d,p)$ by $M=\overline{E_{A\cup \{p\}}}$ with the metric and base point from $(N,d,p)$.
Then $M$ is the smallest closed subtree of $N$ containing $A\cup\{p\}$.  Obviously
$\M\models \RT_r$; Lemma \ref{0hyp}(3) yields a unique extension of $f$ to embed
$E_{A\cup \{p\}}$ into $\W$.  Further, since $f$ is an isometry, it extends further (also
uniquely) to embed $\M$ into $\W$. 
\end{proof}

\begin{theorem}
\label{amalg}
The $L_r$-theory $\RT_r$ has amalgamation over substructures.
That is, if  $\M_0$, $\M_1$ and $\M_2$
are substructures of models of $\RT_r$ and
$f_1\colon \M_0\rightarrow \M_1$, $f_2\colon \M_0\rightarrow \M_2$ are embeddings,
then there exists a model $\n$ of $\RT_r$  and embeddings
$g_i\colon \M_i\rightarrow \n$ such that $g_1\circ f_1=g_2\circ f_2$.
\end{theorem}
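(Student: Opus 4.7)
The plan is to reduce the problem to the already-established amalgamation of $\R$-trees over subtrees (Lemma \ref{rtreeamalg}), using Lemma \ref{substructureofrtree} to replace the given substructures with full models of $\RT_r$. For each $i = 0, 1, 2$, I would extend $\M_i$ to a model $\widetilde{\M_i} \models \RT_r$ via Lemma \ref{substructureofrtree}. The second part of that lemma lifts the given embeddings $f_1, f_2$ to embeddings $\widetilde{f_1} \colon \widetilde{\M_0} \to \widetilde{\M_1}$ and $\widetilde{f_2} \colon \widetilde{\M_0} \to \widetilde{\M_2}$ of these enlarged models, so it suffices to solve the amalgamation problem for the $\widetilde{\M_i}$.

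Next, I would apply Lemma \ref{rtreeamalg} to the underlying $\R$-trees $\widetilde{M_0}, \widetilde{M_1}, \widetilde{M_2}$ along $\widetilde{f_1}, \widetilde{f_2}$, obtaining an $\R$-tree $N_0$ with isometric embeddings $g_i \colon \widetilde{M_i} \to N_0$ such that $g_1 \circ \widetilde{f_1} = g_2 \circ \widetilde{f_2}$. Because $L_r$-embeddings preserve the constant symbol $p$, the images $g_1(p)$ and $g_2(p)$ agree, giving a canonical basepoint for $N_0$. Since every element of $N_0$ lies in $g_1(\widetilde{M_1}) \cup g_2(\widetilde{M_2})$ and since each $\widetilde{M_i}$ has radius at most $r$ around $p$, the tree $N_0$ has radius at most $r$ as well.

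Finally, I would pass to the metric completion $\n := \overline{N_0}$. The completion of an $\R$-tree is again an $\R$-tree (cited in Section 2), the basepoint and radius bound are preserved, and so by Lemma \ref{modelsofT} we have $\n \models \RT_r$. Composing the $g_i$ with $N_0 \hookrightarrow \n$ and restricting to $M_i$ produces the required embeddings $\M_i \to \n$; commutativity $g_1 \circ f_1 = g_2 \circ f_2$ is inherited from the identity on $\widetilde{M_0}$.

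The only subtlety I expect to require careful checking is the preservation of the radius bound under the amalgamation: the construction behind Lemma \ref{rtreeamalg} (via Lemma \ref{chiswell add to points}) declares new distances from a point of $\widetilde{M_1}$ to a point of $\widetilde{M_2}$ by routing through $\widetilde{M_0}$, and one must verify that every such new point still lies within distance $r$ of $p$. This follows because each individual point already lies within distance $r$ of $p$ inside its own $\widetilde{M_i}$, and the embeddings $g_i$ preserve these distances. With that observation in hand, everything else is bookkeeping.
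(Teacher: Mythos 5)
Your proposal is correct and follows essentially the same route as the paper: reduce to models via Lemma \ref{substructureofrtree}, amalgamate the underlying $\R$-trees with Lemma \ref{rtreeamalg}, and observe that the common basepoint and the radius bound are preserved because every point of the amalgam lies in the image of one of the $\M_i$. Your extra step of passing to the metric completion is harmless but unnecessary, since the amalgam is the union of two complete subtrees and is therefore already complete.
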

\begin{proof}
First, by Lemma \ref{substructureofrtree} we may assume that $\M_0$, $\M_1$ and $\M_2$
are models of $\RT_r$ with underlying complete, pointed $\R$-trees $(M_i, d_i, p_i)$.
Further, we may assume that $M_0\subseteq M_i$ and $p_0=p_i$ for $i=1,2$
and that the $f_i$ are inclusion maps.

We use Lemma \ref{rtreeamalg} to construct the $\R$-tree $(N, d)$ 
and isometric embeddings $g_i\colon M_i\rightarrow N$ such that $g_1\circ f_1=g_2\circ f_2$.
Recall that in this construction, $N=M_1\cup M_2$.
Define the base point of $N$ to be $q=p_1=p_2=p_0$.
Since every point in each $M_i$ has distance $\leq r$ from $p_i=q$, we conclude that 
the pointed $\R$-tree $(N,d,q)$ has radius $\leq r$.
Let $\n=(N,d,q)$ be the corresponding $L_r$-structure. 
Then $\n$ is a model of $\RT_r$, and since $g_i$ are isometric embeddings
preserving the base point, they give rise to embeddings of $L_r$-structures as required.
\end{proof}

% bddRtrees23-03-789rbRT.tex
% richly branching trees, their theory, properties thereof, types and independence

% last edited 1-23-2021 by cwh

% this is a final version for arxiv.org, implementing most of the revisions that were
% made in the JLA publication process.

% !TEX root = ./bddRtrees23-master.tex

\section{The model companion of \texorpdfstring{$\RT_r$}{RTr}}
\label{model companion}

In this section we define what it means for a pointed $\R$-tree of radius $\leq r$
to be {\it richly branching.}
We then show that the theory of richly branching pointed $\R$-trees
with radius $r$ is the model companion of $\RT_r$.
Throughout the rest of this paper we assume $r>0$.

\begin{definition}
\label{richlybranching}

A pointed $\R$-tree $(M, d, p)$ of radius $\leq r$ is {\it richly branching} if the set
$$B=\{b\in M\mid \text{ at } b \text{ there are at least 3 branches of height } \geq r-d(p, b)\}$$
is dense in $M$.
\end{definition}

\begin{remark}
\label{general richly branching}
The preceding definition and arguments below apply specifically to pointed $\R$-trees with 
radius $\leq r$.  For general $\R$-trees we define:
\emph{an $\R$-tree $M$ is {\it richly branching} if the set of points at which there are at least 3 branches of 
infinite height is dense in $M$.} Note that an $\R$-tree that is richly branching in this sense must be unbounded.

\begin{lemma}
\label{branch points on a segment}
Let $(X,d)$ be an $\R$-tree in which the set of branch points is dense.  Then for any distinct $x,y \in X$,
the set of branch points on $[x,y]$ is dense in $[x,y]$.
\end{lemma}
\begin{proof}
Let $z$ be on the segment $[x,y]$ and suppose $0 < \delta < \min[d(x,z),d(y,z)]$.
By assumption there is a branch point $w$ in $X$ with $d(w,z)<\delta$.  If $w$ lies
on $[x,y]$ we are done.  Otherwise, let $u$ be the point on $[x,y]$ that is closest to
$w$.  By Lemma \ref{!geo} we have that $x,y,w$ are in distinct branches at $u$,
and $d(z,u) \leq d(w,u)+d(z,u) = d(w,z) < \delta$.  That is, $u$ is a branch point on
$[x,y]$ that is arbitrarily close to an arbitrary point on $[x,y]$ that is distinct from $x,y$.
\end{proof}

If we start with an unbounded richly branching $\R$-tree $M$, assign an arbitrary base point $p$ and select $r>0$, 
we can make a richly branching $\R$-tree with radius $\leq r$ as in Definition \ref{richlybranching} by taking 
the closed ball of radius $r$ with center $p$ in $M$. Note that this yields an $L_r$-prestructure
and the completion is an $L_r$-structure.

Conversely, suppose $M$ is an $\R$-tree and $p$ is any point in $M$.  If the closed ball of radius $r$ with 
center $p$ in $M$ is richly branching in the sense of \ref{richlybranching} for an unbounded set of $r>0$, 
then $M$ is richly branching as a general $\R$-tree.
(See Lemma \ref{branches of sufficient height} below.) 
\end{remark}

Next we give axioms for the class of complete, richly branching, pointed $\R$-trees with radius $r$.

\begin{definition}
\label{rbrtaxioms}
Define ${\psi}(x)$ to be the $L_r$-formula
\[
\inf_{y_1y_2y_3}\max\left\{\max_{i=1,2,3}\{|d(x,y_i)-(r-d(p,x))|\},
\max_{1\leq i<j \leq 3}\{ d(x,y_i)+d(x,y_j)-d(y_i,y_j) \}\right\}
\]
and let $\phi=\displaystyle \sup_{x} {\psi}(x)$.
\end{definition}

\begin{definition}
\label{rbrttheory}
Let $\rbRT_r=\RT_r\cup \{\phi=0\}$.
\end{definition}

To help parse these axioms and picture what they mean, note that in an $\R$-tree
\[
d(x,y_i)+d(x,y_j)-d(y_i,y_j)=2(y_i\cdot y_j)_x = 2\dist(x, [y_i, y_j]) \text{.}
\]
Consider the case when the infima are realized exactly
in $\M$ (eg, if $\M$ is an $\omega$-saturated model.) In this setting, $\phi=0$ means that for any element $a\in M$
with $d(p, a)<r$, there exist $b_1, b_2, b_3$ each on a separate branch at $a$ and each
with distance $r-d(p,a)$ from $a$.
We know $b_1, b_2, b_3$ are on distinct branches at $a$  because $d(a,b_i)>0$  for every $1 \leq i \leq 3$ and 
$2\dist(a, [b_i, b_j])=0$ for every $1\leq i<j\leq 3$. In particular, this makes $a$ a branch point.
In a general model where the infima are not necessarily realized, an approximate version
of this is true; given an element $a$ there must be a branch point within $\epsilon$ of $a$, as we now show.

\begin{lemma}
\label{branching point within epsilon}
Let $\M\models \rbRT_r$ with underlying pointed $\R$-tree $(M, d, p)$ and let $a\in M$. Let $h\in \R$ 
be such that $0<h<r-d(p,a)$. For any $\epsilon>0$ there exists a point
$b\in M$ so that $d(a,b)<\epsilon$ and there are 3 branches at $b$,
each with height at least $h$. 
\end{lemma}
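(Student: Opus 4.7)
The plan is to use the axiom $\phi^\M = 0$ at $a$ to extract approximate witnesses $y_1, y_2, y_3 \in M$, set $b := Y(y_1, y_2, y_3)$ as in Definition \ref{Ypoint}, and verify the conclusion by a geometric argument. Fix an auxiliary $\delta > 0$ with $\delta < \epsilon$ and $2\delta \leq r - d(p, a) - h$; the second constraint is available since $h < r - d(p, a)$. Because $\psi^\M(a) = 0$ and $\psi$ is an infimum formula, there exist $y_1, y_2, y_3 \in M$ with
\[
\bigl|d(a, y_i) - (r - d(p, a))\bigr| < \delta \quad\text{and}\quad d(a, y_i) + d(a, y_j) - d(y_i, y_j) < \delta
\]
for all $i$ and all pairs $i < j$. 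The second inequality says $(y_i \cdot y_j)_a < \delta/2$, i.e.\ $\dist(a, [y_i, y_j]) < \delta/2$ (Definition \ref{gromovproduct} and the remark following it).

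The key step is the estimate $d(a, b) < \delta$. I would let $E$ be the closed subtree spanned by $\{y_1, y_2, y_3\}$; by the discussion after Definition \ref{Ypoint}, $E = [b, y_1] \cup [b, y_2] \cup [b, y_3]$ with these sub-segments meeting only at $b$. Let $z$ be the unique point of $E$ closest to $a$ (Lemma \ref{!geo} and the ensuing remark). Since each $[y_i, y_j]$ is contained in $E$, one has $d(a, z) \leq \dist(a, [y_i, y_j]) < \delta/2$. Choose $k$ with $z \in [b, y_k]$ and set $\{i, j\} = \{1, 2, 3\} \setminus \{k\}$. Then $[y_i, y_j]$ lies in the union of the other two sub-segments of $E$, so $b$ is the point of $[y_i, y_j]$ closest to $z$. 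Using $z \in [a, b]$ (same remark after Lemma \ref{!geo}), one concludes $d(a, b) = d(a, z) + d(z, b) = \dist(a, [y_i, y_j]) < \delta/2$.

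The rest is bookkeeping: combining $d(a, y_i) > r - d(p, a) - \delta$ with $d(a, b) < \delta$ yields
\[
d(b, y_i) \geq d(a, y_i) - d(a, b) > r - d(p, a) - 2\delta \geq h > 0,
\]
so in particular $b \neq y_i$ for each $i$. Since $b = Y(y_1, y_2, y_3)$ lies strictly between $y_i$ and $y_j$ on each geodesic, Lemma \ref{branch}(3) places $y_1, y_2, y_3$ on three distinct branches at $b$, each of which has height $\geq d(b, y_i) > h$. Combined with $d(a, b) < \delta < \epsilon$, this produces the desired $b$. The main obstacle is the key estimate $d(a, b) < \delta$: the axiom only supplies closeness of $a$ to each pairwise geodesic $[y_i, y_j]$, and one must exploit the Y-shape of $E$ and the factorization of closest-point projections through intermediate vertices to transfer that closeness to the common meeting point $b$.
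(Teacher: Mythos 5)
Your proposal is correct and follows essentially the same route as the paper's proof: extract three approximate witnesses from $\psi^{\M}(a)=0$, take $b=Y(y_1,y_2,y_3)$, bound $d(a,b)$ by a Gromov-product/closest-point argument, and read off the three branches of height $>h$ from $d(b,y_i)>h$. Your projection onto the spanned subtree $E$ is a slightly cleaner way to get the key estimate $d(a,b)=\dist(a,[y_i,y_j])<\delta/2$ than the paper's case analysis on the ordering of the three distances $\dist(a,[y_i,y_j])$, and it has the added virtue of disposing of the degenerate (collinear) configuration automatically via $d(b,y_i)>0$.
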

\begin{proof}
Let $s=r-d(p, a)$, and assume $0<4\epsilon< s-h$.
Since $\phi^{\M}=0$, we know $\psi(a)^{\M}=0$.
Thus there exist points $c_1, c_2, c_3\in M$ so that 
$$|d(a,c_i)-s|<\epsilon \text{ and } d(a,c_i)+d(a,c_j)-d(c_i,c_j)<\epsilon \text{.}$$
It follows that for $i\not=j$ 
$$d(c_i, c_j)>d(a, c_i)+d(a, c_j)-\epsilon >2s-3\epsilon>0$$ 
meaning 
$c_1, c_2, c_3$ must be distinct. Moreover, $c_1, c_2$
and $c_3$ cannot lie along a single piecewise segment.
To see this assume $[c_i, c_k]=[c_i, c_j, c_k]$  for $i,j,k \in \{1, 2, 3\}$. Then 
$d(c_i, c_k)=d(c_i, c_j)+d(c_j, c_k)$, implying
$d(c_i, c_k)=d(c_i, c_j)+d(c_j, c_k)>4s-6\epsilon$.
However, $d(c_i, c_k)\leq d(a, c_i)+d(a, c_k)<2s+2\epsilon$.
These inequalities imply $4\epsilon >s>s-h$, a contradiction.  So, each of
$c_1, c_2, c_3$ lies on
a different branch at $Y(c_1, c_2, c_3)$.
Thus, there are at least 3 branches at $Y(c_1, c_2, c_3)$.  

Next, let $u=Y(a, c_1, c_2)$,  $v=Y(a, c_2, c_3)$ and $w=Y(a, c_1, c_3)$.
Without loss of generality, we assume $d(a, u)\leq d(a, v)\leq d(a, w)$; in other
words, $\dist(a, [c_1, c_2])\leq \dist(a, [c_2, c_3])\leq \dist (a, [c_1, c_3])$.
Then $\dist(a, [c_1, c_2])=\dist(a, [c_2, c_3]$), since $M$ is $0$-hyperbolic.
Lemma 2.1.6 in Chiswell \cite{C} and the subsequent discussion
yield that $u=v$, so the point closest to $a$  on $[c_1, c_2]$ and the point
closest to $a$ on $[c_2, c_3]$ are the same and $w=Y(c_1, c_2, c_3)$.
By Lemma \ref{branch}, we have $2\dist(a, [c_1, c_3])=d(a,c_1)+d(a,c_3)-d(c_1,c_3)$.
Thus, $d(a, w)=\dist(a, [c_1, c_3])<{\epsilon}/{2}<\epsilon$.
So, $w=Y(c_1, c_2, c_3)$ has distance $<\epsilon$ from $a$.
Finally, for $i=1, 2, 3$ we have
\[
d(w,c_i) \geq d(a, c_i)-d(a,w)>s-\epsilon- \epsilon/2 >s-4\epsilon>h  \text{.}
\]
Therefore $b=Y(c_1, c_2, c_3)$ satisfies the desired conditions.
\end{proof}

The next lemma shows that any branch at any point in a model of $\rbRT_r$
must have maximum possible height, with that height realized by a point having distance $r$ from $p$.  This also
implies every model of $\rbRT_r$ has radius equal to $r$.

\begin{lemma}
\label{branches of sufficient height}
Let $\M\models \rbRT_r$ and let $a\in M$.
In any branch $\beta$ at $a$, there exists at least one point $b$ such that $d(p, b)=r$. 
\end{lemma}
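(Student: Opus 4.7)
The plan is to apply Lemma \ref{branching point within epsilon} iteratively to construct a Cauchy sequence $(x_n) \subseteq \beta$ with $d(p, x_n) \to r$ and $d(a, x_n)$ bounded below; by metric completeness the sequence will converge to the desired $b$, and a local-connectedness argument will place $b$ in $\beta$.

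First I will pick any $x_0 \in \beta$ (branches are nonempty by definition), noting $d(a, x_0) > 0$ since $a \notin \beta$. Inductively, if $d(p, x_n) < r$, I set $h_n := \tfrac12(r - d(p, x_n))$ and choose $\epsilon_n > 0$ with $\epsilon_n < \min\{\tfrac14 h_n,\ d(a, x_n)\}$. Applying Lemma \ref{branching point within epsilon} at $x_n$ with these parameters yields $b_n \in M$ with $d(b_n, x_n) < \epsilon_n$ and three branches at $b_n$, each of height $\geq h_n$. Since $d(b_n, x_n) < d(a, x_n)$ and balls in an $\R$-tree are convex, $b_n \in \beta$. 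Because $a$ lies in at most one branch at $b_n$ and $p$ lies in at most one, at least one of the three tall branches, call it $\beta^\ast_n$, contains neither; the same connectedness argument gives $\beta^\ast_n \subseteq \beta$. I then select $x_{n+1} \in \beta^\ast_n$ with $d(b_n, x_{n+1}) = h_n$.

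Next I will read off the estimates. Since $a$ and $x_{n+1}$ lie in distinct branches at $b_n$, $b_n \in [a, x_{n+1}]$, giving $d(a, x_{n+1}) = d(a, b_n) + h_n > d(a, x_n) + h_n - \epsilon_n$; symmetrically, $d(p, x_{n+1}) > d(p, x_n) + h_n - \epsilon_n$. Writing $t_n := r - d(p, x_n)$, the choices $h_n = t_n/2$ and $\epsilon_n < h_n/4$ yield $t_{n+1} \leq (5/8)\, t_n$ (so $d(p, x_n) \to r$) and $d(a, x_n) > d(a, x_0)$ for every $n \geq 1$. Moreover $d(x_n, x_{n+1}) \leq \epsilon_n + h_n < t_n$, which is summable, so $(x_n)$ is Cauchy. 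By completeness $x_n \to b \in M$ with $d(p, b) = r$ and $d(a, b) \geq d(a, x_0) > 0$, so in particular $b \neq a$. Since $\R$-trees are locally connected, each branch at $a$ is closed in $M \setminus \{a\}$; as $b \in M \setminus \{a\}$ is the limit of points in $\beta$, I conclude $b \in \beta$.

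I expect the main obstacle to be ruling out the degenerate possibility $b = a$, which becomes delicate when $d(p, a) = r$ and $\beta$ is the only branch at $a$. The remedy is built into the construction: by always choosing $\beta^\ast_n$ to avoid \emph{both} $a$ and $p$, I force $d(a, x_n)$ to strictly increase, which pushes the limit away from $a$. Confirming that such a $\beta^\ast_n$ always exists among the three tall branches is the conceptual key, but it is just a counting observation, since $a$ and $p$ together occupy at most two of the three branches.
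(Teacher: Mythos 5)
Your proposal is correct and follows essentially the same route as the paper: iterate Lemma \ref{branching point within epsilon} to march a Cauchy sequence out along $\beta$ toward the sphere of radius $r$, then use completeness and a positive lower bound on $d(a,x_n)$ to place the limit in $\beta$. Your uniform choice at each stage of a tall branch at $b_n$ avoiding both $a$ and $p$ neatly absorbs the paper's separate case $p\in\beta$; the only (easily fixed) slip is that a branch of height $\geq h_n$ need not contain a point at distance exactly $h_n$ from $b_n$, so you should take $x_{n+1}$ at some distance in $[h_n/2,\, h_n]$ instead, which only changes the constant in the geometric decay $t_{n+1}\leq \tfrac58 t_n$.
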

\begin{proof}
Let $a\in M$ and $\delta=r-d(p, a)$, and let $\beta$ be a branch at $a$. 

We first assume\footnote{Here and in the final paragraph we fix a small mistake in the published version of this paper.}
$p\notin \beta$ and $a \neq p$.  Thus
$a$ is not an endpoint in $M$, so we have $\delta>0$ by Remark \ref{endpoints at distance r}.
In what follows, by iterating the use of Lemma \ref{branching point within epsilon},
we build a sequence $b_1, b_2, b_3,\dots$ of points in $\beta$
so that for any $k\in \N$, $[p, a, b_1, \dots, b_k]$ is a piecewise segment, and
$|r-d(p, b_i)|=r-d(p, b_i)<{\delta}/{2^i}$ .

First we choose $b_1$.  Let $c\in \beta$, so $d(a,c)>0$. 
If $c$ is such that $r-d(p, c)< {\delta}/{2}$, then
let $b_1=c$. Since $c$ and $p$ are in different branches at $a$, it follows that
$d(a,c) \leq \delta$. If $r-d(p,c)\geq {\delta}/{2}$, then let $0<\epsilon<{d(a, c)}/{2}$.
Use Lemma \ref{branching point within epsilon} to find $c'$ so that $d(c, c')<\epsilon$, 
and so that there are at least 3 branches at $c'$ with height at least
$(1-{d(a,c)}/{(2\delta)})(r-d(p, c))$.  So $|d(a, c)-d(a, c')|<\epsilon$, and since 
$\epsilon<{d(a, c)}/{2}$, we know $c'\in \beta$.  Then find a point $b_1$
on a branch at $c'$ other than the one containing $a$. 
This makes $[p, a, c', b_1]$ a piecewise segment.  Select $b_1$ so that 
\begin{align*}
 d(c', b_1) & =(1-\frac{d(a,c)}{\delta})(r-d(p,c)) \geq (1-\frac{d(a,c)}{\delta})\frac{\delta}{2}=\frac{\delta}{2}-\frac{d(a,c)}{2} \text{.} \\
\tag*{Then}  d(a, b_1) & =d(a, c')+d(c', b_1) >(d(a, c)-\epsilon)+d(c', b_1) \\
&>d(a,c)-\frac{d(a,c)}{2}+\frac{\delta}{2}-\frac{d(a,c)}{2} =\frac{\delta}{2} \text{.}
\end{align*}
Thus $d(a, b_1)>{\delta}/{2}$. It follows that
\begin{align*}
r-d(p, b_1) &=r-(d(p, a)+d(a, b_1))=(r-d(p,a))-d(a, b_1) \\
&=\delta-d(a, b_1)<\delta-\frac{\delta}{2}=\frac{\delta}{2}  \text{.}
\end{align*}

Once we have $b_1,\dots,b_i$, we proceed in a manner analogous to what was done above
to find $b_{i+1}$ so that $|r-d(p, b_i)|=r-d(p, b_i)<{\delta}/{2^i}$. Then for any $j>i$
we have 
\[
d(b_i, b_j)=d(p, b_j)-d(p, b_i)<r-(r-\frac{\delta}{2^i})=\frac{\delta}{2^i} \text{.}
\]
It follows that $(b_n)$ is a Cauchy sequence.  Let $b$ be the limit of $(b_i)$.  

Claim: $b_i\in [p, b]$ for all $i\in \N$. 

If not, let $i\in \N$ be such that $b_i\notin [p, b]$.
Then for all $j\geq i$, $b_j\notin [p, b]$, because $[p, b_i, b_j]$ is a piecewise segment. 
Let $j>i$ and let $q$ be the closest point to $b$ on $[p, b_i, b_j]$, {ie}, $q=Y(p, b_j, b)$. 
Either $q\in [p, b_i]$ or $q\in [b_i, b_j]$.  If $q\in [b_i, b_j]$, then $[p, q]=[p, b_i, q]$ 
implying $b_i\in [p, b]=[p, b_i, q, b]$, a contradiction. 
Thus, $q\in [p, b_i]$, and $[p, q, b_i, b_j]$ is a piecewise segment.
Using these facts and our choice of $q$ we conclude
\begin{align*}
 d(b, b_j) & =d(b, q)+d(q, b_j)=d(b, q)+d(q, b_i)+d(b_i, b_j) \\ 
 &\geq d(b, q)+d(q, b_i)=d(b, b_i) \text{.}
\end{align*}
We have demonstrated that  $d(b, b_j)\geq d(b, b_i)>0$ for all $j>i$, contradicting
that $b$ is the limit of the sequence $(b_i)$.  This proves the Claim.

Because $b_i\in [p, b]$, we know $r\geq d(p, b)\geq d(p, b_i)>r-{\delta}/{2^i}$  for any $i\in \N$.
Therefore, $d(p, b)=r$. And $b\in \beta$, since otherwise $b$ would be on a different branch
at $a$ than $b_1$. That would make $[b_1, a, b]$ a piecewise segment, contradicting
that $[p,b]=[p, a, b_1, b]$ is a piecewise segment.

Finally, we deal with the case where $p\in \beta \cup \{ a \}$. Since $\beta$ is open, we may use Lemma \ref{branching point within epsilon}
to find a point $a' \in \beta \setminus \{ p \}$ such that there are 3 branches at $a'$.
Select a branch $\beta'$ at $a'$ so that $p\notin \beta'$ and $a\notin \beta'$. 
The latter guarantees $\beta'\subseteq \beta$. 
Applying the first part of this proof to $a'$ and $\beta'$ yields a point $b \in \beta'$ such that
$d(p,b) = r$.
\end{proof}

The following theorem shows that complete richly branching pointed $\R$-trees
with radius $r$ form an elementary class.

\begin{theorem}
\label{rbrt}
The models of $\rbRT_r$ are exactly the complete, richly branching $\R$-trees with radius $r$.
\end{theorem}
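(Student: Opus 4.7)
My plan is to prove the two directions of the equivalence separately, drawing on Lemmas \ref{branches of sufficient height} and \ref{branching point within epsilon} together with the $\R$-tree geometry recorded in Section 2.

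For the implication $(\Leftarrow)$, assume $\M$ is a complete, richly branching pointed $\R$-tree of radius $r$. Lemma \ref{modelsofT} immediately yields $\M \models \RT_r$, so what remains is to check $\phi^{\M} = 0$, i.e.\ $\psi(a)^{\M} = 0$ for every $a \in M$. Fix $a \in M$ and $\epsilon > 0$. I would use density of $B$ to choose $b \in B$ with $d(a, b) < \epsilon/4$; by definition of $B$, the point $b$ has three distinct branches $\beta_1, \beta_2, \beta_3$, each of height at least $r - d(p, b)$, and I pick $y_i \in \beta_i$ with $d(b, y_i)$ within $\epsilon/4$ of $r - d(p, b)$. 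Since $a$ lies in at most one of the $\beta_i$, two subcases arise: if $a$ lies in none of them, then $b \in [a, y_i]$ for each $i$, so $d(a, y_i) = d(a, b) + d(b, y_i)$ and $d(y_i, y_j) = d(b, y_i) + d(b, y_j)$, and a direct computation bounds both families of expressions inside $\psi$ by $O(\epsilon)$; if $a$ lies in, say, $\beta_1$, one writes $d(a, y_1) = d(a, b) + d(b, y_1) - 2(a \cdot y_1)_b$ via the Gromov product and uses $(a \cdot y_1)_b \leq d(a, b)$ to retain the same $O(\epsilon)$ bound. Letting $\epsilon \to 0$ gives $\psi(a)^{\M} = 0$.

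For the implication $(\Rightarrow)$, let $\M \models \rbRT_r$. Lemma \ref{modelsofT} provides that $\M$ is a complete pointed $\R$-tree of radius at most $r$. The radius must equal $r$: if $M = \{p\}$ then $\psi(p)^{\M}$ with the only possible choice $y_i = p$ evaluates to $r > 0$, contradicting $\phi^{\M} = 0$, so some branch at $p$ is nonempty and Lemma \ref{branches of sufficient height} supplies a point in it at distance exactly $r$ from $p$. To prove $B$ is dense, fix $a \in M$ and $\epsilon > 0$. If $d(p, a) < r$, I would apply Lemma \ref{branching point within epsilon} with any fixed $0 < h < r - d(p, a)$ to produce $b$ with $d(a, b) < \epsilon$ having at least three branches; the key observation is that Lemma \ref{branches of sufficient height} forces every branch at $b$ to have height at least $r - d(p, b)$ (since it contains a point at distance $r$ from $p$), so $b \in B$ automatically. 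If $d(p, a) = r$, first step from $a$ along $[p, a]$ toward $p$ by distance $\epsilon/2$ to obtain $a'$ with $d(p, a') < r$, and apply the previous case to $a'$ with tolerance $\epsilon/2$.

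The main obstacle is the subcase in $(\Leftarrow)$ where $a$ lies inside one of the three chosen branches at $b$: the geodesic $[a, y_1]$ then bypasses $b$, forcing one to carry an extra Gromov-product term through the estimates. No new idea is required beyond Lemma \ref{branch} and the $Y$-point machinery from Definition \ref{Ypoint}, but the constants must be tracked carefully so that every expression inside $\psi$ falls below $\epsilon$.
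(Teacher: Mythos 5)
Your proof is correct and follows essentially the same route as the paper: density of the branching set plus an approximation argument for one direction, and Lemmas \ref{branching point within epsilon} and \ref{branches of sufficient height} for the other (you are in fact slightly more careful than the paper about the boundary case $d(p,a)=r$ and the degenerate tree $\{p\}$). The one remark worth making is that the case analysis you flag as the ``main obstacle'' is unnecessary: the plain triangle inequality $|d(a,y_i)-d(b,y_i)|\leq d(a,b)$ already bounds every term inside $\psi$ by $O(\epsilon)$ regardless of whether $a$ lies in one of the chosen branches at $b$, which is exactly how the paper's estimate proceeds.
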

\begin{proof}
Let $(M,d,p)$ be a complete, richly branching pointed $\R$-tree with radius $r$ and let $\M$
be the corresponding $L_r$-structure. Clearly $\M\models \RT_r$, and it remains
to verify that $\phi^\M=0$.
Let $a\in M$. If $d(p, a)=r$, then let $c_1=c_2=c_3=a$
and note that these witness $\psi^\M(a)=0$. 
So we assume $d(p, a)<r$, and let $\epsilon >0$  be such that $\epsilon/3<r-d(p,a)$.
Since $M$ is richly branching, the set $B$ from Definition \ref{richlybranching}
is dense in $M$.  So, there exists $b\in B$ with $d(a,b)<\epsilon/3$.
Then there exist $c_1, c_2, c_3\in M$ such that each distance $d(b,c_i)$ is $> r-d(b, p) - \epsilon/3$, 
and each of $c_1, c_2, c_3$ is on a different branch at $b$.  By moving these points
closer to $b$ along the geodesic segments if necessary, we may assume
$|d(b,c_i) - (r - d(b,p)| < \epsilon/3$ for each $i=1,2,3$.
The triangle inequality in $M$ implies $|d(a, c_i)-d(b, c_i)|<\epsilon/3$ for
all $i=1, 2, 3$, and $|(r-d(p, a))-(r-d(p, b))| < \epsilon/3$.  The triangle inequality
in $\R$ yields $|d(a,c_i)-(r-d(p,a))|<\epsilon$ for each $i=1, 2, 3$.

Next we fix $i \neq j$ in $\{1,2,3\}$ and note that
$d(c_i, c_j)=d(c_i, b)+d(c_j, b)$ 
because $c_1, c_2, c_3$ are on different branches at $b$.
Since we have $d(a,b) < \epsilon/3$, the triangle inequality yields
$|d(a,c_i)+d(a,c_j)-d(c_i, c_j)|< {2\epsilon}/{3}$.

So, for each $\epsilon$, there are $c_1, c_2$ and $c_3$ making
\[ 
\max_{i=1, 2, 3}\{|d(a,c_i)-(r-d(p,a))|\} < \epsilon
\]
\[
\tag*{and} \max_{1\leq i<j \leq 3}\{ |(d(a,c_i)+d(a,c_j))-d(c_i,c_j)| \}<\epsilon \text{.}
\]
Thus, $\psi(a)^\M=0$, and since $a$ was arbitrary, we conclude $\phi^\M=0$.

For the converse direction, assume $\M\models \rbRT_r$, so $(M, d, p)$ is a complete, 
pointed $\R$-tree. Let $a\in M$ and $\epsilon>0$.
By Lemma \ref{branching point within epsilon}
we may find $b\in M$ so that $d(a, b)<\epsilon$ and there are at least 3
distinct branches at $b$.
By Lemma \ref{branches of sufficient height}
each branch at $b$ contains a point with distance $r$ from $p$,
so the height of each branch is at least $r-d(p, b)$. It also follows from \ref{branches
of sufficient height} that $M$ has radius $r$.
Since $a\in M$ and $\epsilon>0$ were arbitrary, we conclude the
set $B$ of points with at least 3 branches of height $\geq r-d(p, b)$
is dense in $M$.  Therefore, $M$ is richly branching.
\end{proof}

Next, we turn to a series of lemmas that are needed for our proof that the theory 
$\rbRT_r$ is the model companion of $\RT_r$.  (See Theorem \ref{modcomp} below.)  

\begin{lemma}\label{ec implies richly branching}
Every existentially closed model of $\RT_r$ is a model of $\rbRT_r$.
\end{lemma}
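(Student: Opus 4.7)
The plan is to show that if $\M$ is an existentially closed model of $\RT_r$, then $\psi(a)^\M = 0$ for every $a \in M$, whence $\phi^\M = \sup_a \psi(a)^\M = 0$ and so $\M \models \rbRT_r$. The formula $\psi(x)$ is an $\inf$-formula, so by the definition of existential closedness, it suffices to produce, for each fixed $a \in M$, an extension $\n \supseteq \M$ with $\n \models \RT_r$ in which $\psi(a)^\n = 0$.

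Fix $a \in M$ and set $h = r - d(p,a)$. If $h = 0$, then the witnesses $y_1 = y_2 = y_3 = a$ already give $\psi(a)^\M = 0$, so assume $h > 0$. The plan is to glue three fresh branches of length $h$ to $\M$ at the point $a$. Concretely, I take three pairwise disjoint isometric copies $I_1, I_2, I_3$ of the real interval $[0,h]$, identify the endpoint $0$ of each $I_i$ with $a$, and apply Lemma \ref{chiswell add to points} (with trivial adjustment, or by three successive applications) to conclude that the set $N = M \cup I_1 \cup I_2 \cup I_3$ carries a canonical $\R$-tree metric $\hat d$ that extends $d$ and for which each $I_i$ is an isometric copy of $[0,h]$ attached to $M$ only at $a$. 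Keeping the basepoint $p$, the pointed $\R$-tree $(N, \hat d, p)$ has radius at most $d(p,a) + h = r$, since any new point $y \in I_i$ satisfies $\hat d(p,y) = d(p,a) + \hat d(a,y) \leq d(p,a) + h = r$. The completeness of $\M$ plus the fact that we glued only finitely many closed bounded intervals along a single point gives completeness of $(N, \hat d)$, so the associated $L_r$-structure $\n$ is an honest $L_r$-structure with $\n \models \RT_r$ (by Lemma \ref{modelsofT}) and $\M \subseteq \n$.

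In $\n$, let $y_i \in I_i$ be the endpoint opposite to $a$, so $\hat d(a,y_i) = h = r - d(p,a)$ for each $i$, and for $i \neq j$ the unique arc from $y_i$ to $y_j$ passes through $a$, giving $\hat d(y_i, y_j) = 2h$ and hence $\hat d(a,y_i) + \hat d(a,y_j) - \hat d(y_i,y_j) = 0$. Both clauses inside the max of $\psi(a)$ therefore vanish at $(y_1,y_2,y_3)$, and so $\psi(a)^\n = 0$. Because $\psi$ is an $\inf$-formula, the hypothesis that $\M$ is existentially closed forces $\psi(a)^\M = \psi(a)^\n = 0$. As $a \in M$ was arbitrary, $\phi^\M = 0$ and $\M \models \rbRT_r$.

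The substantive point is really the free-branch extension step: one needs the gluing lemma to produce an actual $\R$-tree with exactly the prescribed three new branches at $a$, and one needs the bound $h = r - d(p,a)$ to keep the radius $\leq r$ so that the result is still a model of $\RT_r$. Both are immediate from Lemma \ref{chiswell add to points} and the choice of $h$, so no genuine obstacle arises; everything else is a direct appeal to the definition of e.c.\ model together with Lemma \ref{modelsofT}.
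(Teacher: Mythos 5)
Your proof is correct and follows essentially the same route as the paper: fix $a$, handle the boundary case $d(p,a)=r$ with $y_1=y_2=y_3=a$, and otherwise use Lemma \ref{chiswell add to points} to build an extension $\n\models\RT_r$ with three branches of height $r-d(p,a)$ at $a$, whose endpoints witness $\psi(a)^{\n}=0$, so that existential closedness forces $\psi(a)^{\M}=0$. The extra details you supply (the radius bound and the completeness of the glued tree) are correct and only make explicit what the paper leaves implicit.
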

\begin{proof}
Let $\M \models \RT_r$ be existentially closed with underlying $\R$-tree $(M,d,p)$.
Consider $a \in M$. If $r-d(p,a)=0$, then ${\psi}^{\M}(a)=0$
is true: make $y_1=y_2=y_3=a$.
When $r-d(p,a)>0$,  using Lemma \ref{chiswell add to points}
 we may construct an extension $\n \models \RT_r$ of $\M$
with underlying $\R$-tree $(N,d,p)$ such that in $N$ there exist $c_1, c_2$
and $c_3$, each on a different branch at $a$ and each with distance $r-d(p,a)$ from
$a$.  It follows that ${\psi}^{\n}(a)=0$.
Since $\M$ is existentially closed and $\psi$ is an $\inf$-formula, we have
${\psi}^{\n}(a)={\psi}^{\M}(a)=0$.
Our choice of $a \in M$ was arbitrary, so $\phi^\M=0$.
It follows that $\M$ is a model of $\rbRT_r$.
\end{proof}

The next lemma connects $\kappa$-saturation to the number
of branches at the points whose distance from the base point is $<r$, in a richly branching $\R$-tree of radius $r$.
(Recall from Remark \ref{endpoints at distance r} that every point at distance $=r$ from the base point 
in such an $\R$-tree is an endpoint.)  The converse of this lemma is also true, and the resulting 
characterization of $\kappa$-saturated models of $\rbRT_r$ is Theorem \ref{kappasaturated=kappabranching}
below, which will be proved once we have shown that $\rbRT_r$ admits quantifier elimination.

\begin{lemma}
\label{saturatedrbrt}
Let $\M\models \rbRT_r$. Let $\kappa$ be an infinite cardinal.
If $\M$ is $\kappa$-saturated, then $(M,d,p)$ has at least $\kappa$-many branches at 
every point $a$ such that $d(p,a)<r$.
\end{lemma}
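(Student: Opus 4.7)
My plan is to argue by contradiction. Assume $a \in M$ satisfies $d(p,a) < r$ but the set $B$ of branches at $a$ has cardinality $\mu < \kappa$. Fix $\epsilon$ with $0 < \epsilon < r - d(p,a)$. For each $\beta \in B$, Lemma \ref{branches of sufficient height} supplies a point in $\beta$ at distance $r$ from $p$, hence (by geodesicity) a point $b_\beta \in \beta$ with $d(a, b_\beta) = \epsilon$. I will then consider the partial type
\[
\Sigma(x) \;=\; \{\,d(a,x)=\epsilon\,\} \;\cup\; \{\,(x \cdot b_\beta)_a = 0 : \beta \in B\,\} \;\cup\; \{\,(x \cdot p)_a = 0\,\}
\]
over the parameter set $\{a,p\} \cup \{b_\beta : \beta \in B\}$, of cardinality at most $\mu + 2 < \kappa$. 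Any realization $c \in M$ of $\Sigma(x)$ would have $d(a,c) = \epsilon > 0$ and $a \in [c, b_\beta]$ for every $\beta$; so $c$ would sit on a branch of $a$ distinct from each $\beta \in B$, contradicting the hypothesis. The whole proof thus reduces to showing that $\Sigma(x)$ is finitely satisfiable in $\M$, so that $\kappa$-saturation can realize it.

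A finite subset of $\Sigma(x)$ specifies a tolerance $\gamma > 0$ together with finitely many representatives $b_{\beta_1},\dots,b_{\beta_k}$. The easy situation is that some $\beta \in B \setminus \{\beta_1,\dots,\beta_k\}$ does not contain $p$: then any point of $\beta$ at distance $\epsilon$ from $a$ witnesses the subset with every Gromov product equal to zero exactly, and this always applies when $\mu$ is infinite. The essential case is when $\mu$ is finite and every branch is either among the $\beta_i$ or contains $p$, because then $\M$ contains no point truly lying on a new branch and I must settle for an approximate witness. Applying Proposition \ref{exactquantifiers} to the axiom $\phi$ forces at least three branches at $a$, so I can choose an index $j$ with $p \notin \beta_j$. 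I then move a tiny distance into $\beta_j$ from $a$ and apply Lemma \ref{branching point within epsilon} at that nearby point to produce a branch point $b \in \beta_j$ with $d(a,b) < \gamma$ and three branches at $b$ of height at least $\epsilon$; one of these contains $a$, and of the remaining two sub-branches of $\beta_j$ at least one does not contain $b_j$, so I take $x$ on that one at distance $\epsilon - d(a, b)$ from $b$.

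This point $x$ satisfies $d(a, x) = \epsilon$; it lies in $\beta_j$ while $b_i \in \beta_i$ for $i \neq j$, giving $(x \cdot b_i)_a = 0$ exactly; it satisfies $(x \cdot p)_a = 0$ because $p \notin \beta_j$; and a short computation with the Y-point $Y(a,b,b_j)$ shows $(x \cdot b_j)_a = d(a, Y(a,b,b_j)) \leq d(a, b) < \gamma$. That verifies finite satisfiability, and $\kappa$-saturation supplies the desired realization. The main obstacle is precisely this last case: $\M$ genuinely lacks an element on a new branch at $a$, so the witnesses to finite satisfiability must be approximations rather than exact solutions. It is the richly branching axiom that makes those approximations available, and $\kappa$-saturation is what upgrades them into an honest realization on a new branch.
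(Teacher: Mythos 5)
Your proof is correct and follows essentially the same route as the paper's: assume fewer than $\kappa$ branches at $a$, mark a point on each, and use $\kappa$-saturation to realize a partial type asserting that $x$ sits at a fixed positive distance from $a$ on a branch avoiding all the marked points (your conditions $(x\cdot b_\beta)_a=0$ are equivalent to the paper's additivity conditions $d(b_i,x)=d(b_i,a)+d(a,x)$). Where you go beyond the paper is in actually verifying finite satisfiability, which the paper dismisses as ``straightforward'' but which genuinely requires the approximate witnesses you build via Lemma \ref{branching point within epsilon} in the case where $a$ has only finitely many branches, all of them listed (since then no exact witness exists in $\M$); your handling of that case is correct.
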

\begin{proof}
We begin with the case where $\kappa=\omega$.  Arguing by contradiction,
we assume there exists an $\omega$-saturated model $\M$ of $\rbRT_r$ containing
a point $a$ with $d(p,a)<r$ such that the degree of $a$ is finite. 
Let  $\beta_1,\dots,\beta_n$ be the distinct branches at $a$.

For each $i=1,\dots,n$
we can find $b_i\in \beta_i$ so that $d(a, b_i)=r-d(p,a)$ as follows.
When $p$ is not in the branch $\beta_i$ we let $b_i$ be a point in $\beta_i$ with $d(p, b_i)=r$.
The existence of such a point is guaranteed by Lemma \ref{branches of sufficient height}.
In this case, either $p=a$, or $p$ and $b_i$ are on different branches
at $a$, and it follows in both instances that $d(a, b_i)=r-d(p,a)$.
When the base point $p$ is in branch $\beta_i$, we select $x\in [a, p]$ distinct
from $a$ and $p$ such that there are at least 3 branches at $x$.
Then $d(p, x)<d(p, a)$, so $r-d(p, x)>r-d(p, a)$.
On a branch at $x$ that contains neither $p$ nor $a$, by Lemma \ref{branches
of sufficient height}, there is a point $y$ so that $d(p, y)=r$. 
Note that $(a, y]\subseteq \beta_i$.
Since $p$ and $y$ are on different branches at $x$, we know $d(p, x)+d(x,y)=d(p, y)=r$,
so $d(x, y)=r-d(p, x)$.
Then  $$d(a, y)=d(a, x)+d(x, y)=d(a, x)+(r-d(p, x))> d(a, x)+(r-d(p, a))>r-d(p,a)$$
because $a$ and $y$ are on different branches at $x$.
Since $d(a, y)>r-d(p, a)$ we may take the point on $(a,y]$ with distance $r-d(p,a)$
from $a$ to be our $b_i$.

Next, let $r-d(p, a)>\epsilon>0$. Since $M$ is a richly branching $\R$-tree,
there exists $a'$ in branch $\beta_1$ such that  $0<d(a, a')< {\epsilon}/{2}$ and there are 
at least 3 branches at $a'$. Choose a branch at $a'$ that does not contain $a$
and does not contain $b_1$. Using Lemma  \ref{branches of sufficient height}, find $c$ 
in this branch at $a'$ so that $d(a', c)=r-d(p, a')$.
Note that by Lemma \ref{branch} we know $[b_1, a', c]$
and $[a, a', c]$ are both piecewise segments, as is $[b_i, a, a', c]$ for any $i=2,\dots,n$.

Using these facts and the fact that $d(a, a')<{\epsilon}/{2}$,
it is now straightforward to show that
\begin{align*} 
|d(a, c)-(r-d(p, a))|&<\epsilon\\
\tag*{and} |d(b_i, c)-(d(b_i, a)+d(a, c))|&<\epsilon \text{.}
\end{align*}

Let $\Sigma$ be the set of all conditions
\begin{align*}
|d(a, x)-(r-d(p,a))| & \leq 1/k \\
\tag*{and}  |d(b_i, x)-\left(d(b_i, a)+d(a, x)\right)| & \leq 1/k 
\end{align*}
for $k \in \N$ and $i=1,\dots,n$.

Since $\epsilon$ was arbitrary, we have that $\Sigma$
is finitely satisfiable.  Since there are only finitely many parameters in 
$\Sigma$, and $\M$ is $\omega$-saturated, there must exist $b\in M$ that satisfies
all of these conditions simultaneously. For this $b$ we have $d(a, b)=r-d(p, a)$ and
$d(b_i, b)=d(b_i, a)+d(a, b)$, meaning $b$ is in a different branch at $a$
from any of the $b_i$.  This contradicts that each branch at $a$ was represented
by one of $b_1,\dots, b_n$.  Therefore there must be at least $\omega$-many branches at
each point in $M$
when $\M$ is $\omega$-saturated.

Now, let $\kappa>\omega$ and assume $\M$ is $\kappa$-saturated. 
Let $a\in M$ with $d(p,a)<r$.
Assume toward a contradiction that there are exactly $\alpha$-many distinct branches 
at $a$ where $\alpha<\kappa$.  Index the branches at $a$ by $i<\alpha$,
and by Lemma \ref{branches of sufficient height} on each of these branches
designate a point $b_i$ such that $d(a, b_i)=r-d(p,a)$. 
Let $A=\{a\}\cup \{b_i\mid i<\alpha\}$
and note that $A$ has cardinality less than $\kappa$. Define
$$\Sigma=\{|d(a, x)-(r-d(p,a))|=0\}\cup\{|d(b_i, x)-\left(d(b_i, a)+d(a, x)\right)|=0\mid i<\alpha\} \text{.} $$
It is straightforward to show $\Sigma$ is finitely satisfiable, since by the first part
of this proof there must be infinitely many branches at $a$.
By $\kappa$-saturation there exists $b\in M$ that satisfies all of these conditions.
Using Lemma \ref{branch} we conclude that $b$ is on a different branch out of $a$ from each $b_i$,
contradicting the assumption that every branch at $a$ was represented by one of the $b_i$.
\end{proof}

\begin{lemma}
\label{embed}
 Let $\kappa$ be an infinite cardinal. Assume $\M$ is a $\kappa$-saturated model of 
 $\rbRT_r$ with underlying $\R$-tree $(M,d,p)$.
Let $K$ be a non-empty, finitely spanned $\R$-tree. Designate a base point $q$ in $K$.
For any $e\in M$ such that $r-d(p, e)\geq \sup\{d(q, x)\mid x\in K\}$
and any collection $\{\beta_i \mid i< \alpha\}$ of branches at $e$ with cardinality
$\alpha<\kappa$, there exists an isometric embedding $f$ of $K$
into $M$ such that $f(q)=e$ and $f(K)\cap \beta_i=\emptyset$ for all $i<\alpha$.
\end{lemma}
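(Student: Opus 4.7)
The plan is to induct on $n$, the number of endpoints of $K$ distinct from $q$. The base case $n=0$ forces $K=\{q\}$ by Lemma \ref{mingenset}, and $f(q)=e$ works (its image is disjoint from each $\beta_i$). For the inductive step I would pick an endpoint $y \neq q$ of $K$, let $z$ be the nearest element to $y$ in the finite set consisting of $q$ together with the endpoints and branch points of $K$ other than $y$, and set $K' := K \setminus (z,y]$. This $K'$ is a finitely spanned subtree containing $q$ and $z$, with one fewer non-$q$ endpoint. Applying the inductive hypothesis to $K'$ yields an isometric embedding $f' : K' \to M$ with $f'(q)=e$ and $f'(K')$ disjoint from each $\beta_i$, and the task reduces to extending $f'$ by mapping the leaf segment $[z,y]$ into $M$ starting from $e' := f'(z)$.

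For the extension I would use Lemma \ref{saturatedrbrt} and Lemma \ref{branches of sufficient height}. A short computation, using $z \in [q,y]$ and $f'$ isometric, yields
\[
d(p,e') + d(z,y) \leq d(p,e) + d(q,z) + d(z,y) = d(p,e) + d(q,y) \leq r,
\]
so $d(p,e') < r$ and there is room for a segment of length $d(z,y)$ in any branch at $e'$ that does not contain $p$. I would then select a branch $\gamma$ at $e'$ avoiding (i) the finitely many branches used at $e'$ by $f'(K')$, (ii) the unique branch at $e'$ containing $p$ (if $e' \neq p$), and (iii) all the $\beta_i$ in case $e' = e$. The avoidance set has cardinality $<\kappa$, so Lemma \ref{saturatedrbrt} supplies a fresh $\gamma$, and Lemma \ref{branches of sufficient height} produces a point $b\in\gamma$ with $d(e',b)=d(z,y)$. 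Mapping $[z,y]$ isometrically onto $[e',b]$ (with $z \mapsto e'$ and $y \mapsto b$) extends $f'$ to a map $f$ on $K$.

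What remains is to verify $f$ is globally isometric and that the disjointness condition persists. Freshness of $\gamma$ forces $e'$ onto the geodesic between $f'(x)$ and the $f$-image of any point in $(z,y]$, delivering the required additivity of distances. For the disjointness condition, the case $e'=e$ is immediate from the choice $\gamma \neq \beta_i$. In the case $e' \neq e$, the point $e'$ lies in a unique branch $\delta$ of $M$ at $e$; $\delta$ cannot equal any $\beta_i$, since $e' \in f'(K')$ and $f'(K')$ misses each $\beta_i$ by the inductive hypothesis. A short tree argument then places the new segment $[e',b]$ entirely within $\delta \cup \{e\}$, hence disjoint from every $\beta_i$.

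The main obstacle I anticipate is the bookkeeping: one must verify at every step that $d(p,e')<r$ strictly (so Lemma \ref{saturatedrbrt} applies and $\gamma$ has room for a segment of length $d(z,y)$), and that the combined avoidance set has cardinality $<\kappa$. The distance inequality displayed above is the crucial point; it relies on the hypothesis $r-d(p,e) \geq \sup_{x \in K} d(q,x)$ together with the additivity $d(q,y)=d(q,z)+d(z,y)$ coming from $z\in[q,y]$. Once this is in hand, the $\kappa$-saturation of branching does the rest.
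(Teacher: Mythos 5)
Your proposal is correct and follows essentially the same route as the paper's (much more compressed) proof: induction on the number of endpoints, attaching each leaf segment along a fresh branch supplied by Lemma \ref{saturatedrbrt}, with Lemma \ref{branches of sufficient height} guaranteeing enough height and the hypothesis $r-d(p,e)\geq\sup_{x\in K}d(q,x)$ yielding the inequality $d(p,e')+d(z,y)\leq r$ that keeps the image inside $M$. The extra bookkeeping you supply (the choice of $z$, the strict inequality $d(p,e')<r$, and the branch-avoidance argument for the $\beta_i$) correctly fills in the details the paper leaves to the reader.
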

\begin{proof}
By Lemma \ref{saturatedrbrt},
$(M,d,p)$ has at least $\kappa$-many branches at each point $a$ satisfying $d(p,a)<r$. 
By Lemma \ref{mingenset}, there is
a minimal set that spans $K$, namely, the set of endpoints of $K$.
Proceed by induction on the size of this minimal spanning set,
building up the embedding at each step using the fact that there are $\kappa$-many 
branches of sufficient height at every interior point.
The restriction that $r-d(p, e)\geq \sup\{d(q, x)\mid x\in K\}$
keeps the image of the embedding inside $M$. 
\end{proof}

\begin{theorem}
\label{modcomp}
The $L_r$-theory $\rbRT_r$ is the model companion of $\RT_r$.
\end{theorem}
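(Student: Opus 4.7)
\emph{Proof plan.} The plan is to invoke Proposition \ref{axiomatizeec}: since $\RT_r$ is inductive by Lemma \ref{chainofmodels}, it is enough to show that the models of $\rbRT_r$ are exactly the existentially closed models of $\RT_r$. One direction is Lemma \ref{ec implies richly branching}, so the remaining task is to prove that every $\M \models \rbRT_r$ is e.c.\ in $\RT_r$. Fix such an $\M$, an extension $\n \models \RT_r$, an $\inf$-formula $\psi(\bar x) = \inf_{\bar y} \phi(\bar x, \bar y)$, and a tuple $\bar a$ from $M$; since $\psi^\n(\bar a) \leq \psi^\M(\bar a)$ is automatic, only the reverse inequality needs proof. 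My approach is to pass to a $\kappa$-saturated elementary extension $\M^* \succeq \M$ with $\kappa > |N|$, construct an isometric $L_r$-embedding $h \colon \n \to \M^*$ fixing $M$ pointwise, and then invoke preservation of the quantifier-free $\phi$ by $h$ to deduce $\psi^\n(\bar a) \geq \psi^{\M^*}(\bar a) = \psi^\M(\bar a)$, which closes the argument.

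The construction of $h$ proceeds by Zorn's lemma on the poset of isometric $L_r$-embeddings $f \colon A \to M^*$, where $A$ is a closed subtree of $\n$ containing $M$ and $f$ fixes $M$ pointwise (chains extend by taking the union and its closure). Let $(A^*, f^*)$ be maximal; it suffices to rule out any $b \in N \setminus A^*$. Let $c \in A^*$ be the unique closest point to $b$, which exists by the remark following Lemma \ref{!geo}. Since $p \in M \subseteq A^*$, the geodesic $[p,b]$ in $\n$ passes through $c$, so $L := d(c,b) \leq r - d(p, f^*(c))$. I need a branch at $f^*(c)$ in $M^*$, not already occupied by $f^*(A^*)$, that reaches distance at least $L$ from $f^*(c)$. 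Lemma \ref{saturatedrbrt} supplies $\geq \kappa$ branches at $f^*(c)$ in $M^*$ when $d(p, f^*(c)) < r$ (and when $d(p, f^*(c)) = r$ we have $L = 0$ and nothing to do), while $f^*(A^*)$ can meet at most $|N| < \kappa$ of them; Lemma \ref{branches of sufficient height} ensures that on any chosen unoccupied branch there is a point at distance exactly $L$ from $f^*(c)$. Mapping $[c,b]$ isometrically onto the resulting segment in $M^*$ then extends $f^*$ to a strictly larger pair, contradicting maximality.

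The main obstacle is the successor step of this construction: producing, at every stage, a branch at $f^*(c)$ in $M^*$ that has not been used by the image of the portion of $\n$ embedded so far. This is precisely where the richly branching hypothesis, amplified through saturation of $\M^*$ via Lemma \ref{saturatedrbrt} into an abundance of $\geq \kappa$ branches at each interior point, does the essential work, with the cardinality bound $\kappa > |N|$ making the branch-avoidance bookkeeping succeed uniformly across all stages.
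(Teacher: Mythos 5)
Your proof is correct, and while it shares the paper's skeleton --- Proposition \ref{axiomatizeec} together with Lemma \ref{chainofmodels} and Lemma \ref{ec implies richly branching}, reducing everything to showing that models of $\rbRT_r$ are existentially closed in $\RT_r$ --- the way you verify existential closedness is genuinely different. The paper saturates the pair $(\M,\n)$ and then, for each finite tuple $\bar b$ from $N$, produces a tuple $\bar c$ in $M$ itself with the same quantifier-free type over $\bar a$: it partitions the $b_j$ according to their closest points in the subtree $E_a$ spanned by $\bar a$ and uses Lemma \ref{embed} to re-embed each resulting finitely spanned tree into $M$ off of $E_a$. You instead pass to a single $\kappa$-saturated elementary extension $\M^*\succeq\M$ with $\kappa>|N|$ and build, by Zorn's lemma and one-point extensions, an isometric embedding of all of $\n$ into $\M^*$ over $M$, then transfer the infimum through this embedding and back down to $\M$ by elementarity. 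Both arguments ultimately rest on the same two facts --- Lemma \ref{saturatedrbrt} (a $\kappa$-saturated model has at least $\kappa$ branches at every interior point) and Lemma \ref{branches of sufficient height} (every branch reaches the boundary sphere) --- but yours trades the paper's finitary closest-point decomposition for a global embedding with cardinality bookkeeping. The paper's route needs only $\omega_1$-saturation since it only ever moves finite configurations; yours needs saturation above $|N|$, which the existence of $\kappa$-saturated elementary extensions (Section 3) supplies, and in exchange avoids the pair-structure saturation trick and the case analysis over equivalence classes of closest points. One small point worth writing out in a full version: when you extend $f^*$ across $[c,b]$, you should verify that distances from points of $(c,b]$ to points of $A^*$ decompose through $c$ on both sides (in $\n$ because $c$ is the gate of $A^*$ for $b$, and in $\M^*$ because the chosen branch at $f^*(c)$ avoids $f^*(A^*)$); this is routine from Lemma \ref{!geo} and Lemma \ref{branch}, but it is the step where the branch-avoidance is actually used.
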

\begin{proof}
Since $\RT_r$ is an inductive theory, by Lemma \ref{axiomatizeec} it suffices
to show that the models of $\rbRT_r$ are exactly the existentially closed models of $\RT_r$.
By Lemma \ref{ec implies richly branching} we know every existentially closed model
of $\RT_r$ is a model of $\rbRT_r$.
It remains to show that every model of $\rbRT_r$ is an existentially closed model
of $\RT_r$.  Let $\M\models \rbRT_r$. Let $\n\models \RT_r$ be an
extension of $\M$. We may assume $\M$ and $\n$ are
$\omega_1$-saturated. (This is because we may consider the structure
$\big(\M,\n, \iota \big)$ where $\iota$ is the embedding
from $\M$ to $\n$, and take an $\omega_1$-saturated elementary extension of
that structure. If we can verify the definition of existentially
closed in that setting, it will be true of $\M$ and $\n$.) Let $(M,d,p)$ and $(N,d,p)$
be the underlying pointed $\R$-trees for $\M$ and $\n$ respectively.

Let $a=a_1,\dots,a_k\in M$, and without loss of generality assume base point $p$ is
among $a_1,\dots,a_k$.
We claim that for any $b_1,\dots,b_l\in N$, there exist $c_1,\dots,c_l\in M$
so that $d(b_i, b_j)=d(c_i, c_j)$ for all $i,j\in \{1,\dots,l\}$ and $d(a_i, b_j)=d(a_i, c_j)$
for all $i\in \{1,\dots,k\}$ and $j\in \{1,\dots,l\}$.\\
To prove this claim, let  $b_1,\dots,b_l\in N$, and
let $E_a\subseteq M$ be the subtree spanned by $a=a_1,\dots,a_k$.
Note that $E_a=\overline{E_a}$.
Define an equivalence relation on $b_1,\dots,b_l$ by: $b_i\sim b_j$
if $b_i$ and $b_j$ have the same closest point in $E_a$. Let $A_1,\dots,A_m$
be the equivalence classes of this equivalence relation, and for $v=1,\dots,m$
let $e_v$ be the unique closest point in $E_a$ common to the members of $A_v$.

For each $v=1,\dots,m$
let $K_v$ be the $\R$-tree spanned by $A_v\cup\{e_v\}$ in $N$, with base point $e_v$.
Note that each $K_v$ is closed and for $u\not=v$, $K_u\cap K_v=\emptyset$.
Since $(N, d, p)$ has radius $r$ and $p\in E_a$ and $e_v$ is the unique closest point to $x$ in $E_a$,
Lemma \ref{branch} gives
$d(p, e_v)+d(e_v, x)=d(p, x)\leq r$ for each $x\in K_v$. Therefore
$d(e_v, x)\leq r-d(p, e_v)$ for each $x\in K_v$. Thus, $\sup\{d(e_v, x)\mid x\in K_v\}\leq r-d(p, e_v)$.
Now, for each $v=1,\dots,m$, by Lemma \ref{embed}, there is an isometric embedding
$f_v\colon K_v\rightarrow M$ sending $e_v$ to $e_v$
such that $f_v(K_v)$ does not intersect $E_a$ except at $e_v$.  Note that $e_v$ is the
unique closest point in $E_a$ for every point in $f_v(K_v)$.

Let $f$ be the union of the functions $f_v$ for $v=1,\dots,m$. If $b_i$ and $b_j$
are both in $A_v$, then $d(b_i, b_j)=d(f_v(b_i), f_v(b_j))=d(f(b_i), f(b_j))$.
If $b_i$ and $b_j$ are in $A_{u}\not=A_{v}$ respectively, then using Lemma
\ref{differentclosestpoints}
\begin{align*}
d(b_i, b_j)&=d(b_i, e_u)+d(e_u, e_v)+d(e_v, b_j)\\
&= d(f_u(b_i), f_u(e_u))+d(e_u, e_v)+d(f_v(e_v), f_v(b_j))\\
&= d(f(b_i), e_u)+d(e_u, e_v)+d(e_v, f(b_j))\\
&=d(f(b_i), f(b_j)).
\end{align*}
Therefore the function $f$ is an isometric embedding from
$\displaystyle\bigcup_{v=1}^m K_v$ to $M$.\\
Let $c_j=f(b_j)$ for all $j\in\{1,\dots,l\}$.
Then clearly $d(b_i, b_j)=d(f(b_i), f(b_j))=d(c_i, c_j)$ for all $i,j\in \{1,\dots,l\}$.
Now let $i\in\{1,\dots,k\}$ and $j\in \{1,\dots,l\}$.  Let $e_v$ be the closest point to $b_i$
in $E_a$.  Then
\begin{align*}
d(a_i,b_j)&=d(a_i, e_v)+d(e_v, b_j)\\
&=d(a_i, e_v)+d(f(e_v),f(b_i))\\
&=d(a_i, e_v)+d(e_v, c_j)=d(a_i, c_j).
\end{align*}
Thus, the claim is true.

The values of quantifier free
formulas in $\M$ are determined by distances in $M$. (This can be shown
using induction on the definition of quantifier free formula, since connectives
are continuous functions on atomic formulas, which in this case are all of the
form $d(t_1,t_2)$ for terms $t_1, t_2$.)
So, the preceding claim implies that for any quantifier free formula
$\phi(x_1,\dots,x_k, y_1,\dots,y_l)$
and any $b_1,\dots,b_l\in \n$,
there exist $c_1,\dots,c_l\in \M$ so that
$$\phi(a_1,\dots,a_k,b_1,\dots,b_l)^\n=\phi(a_1,\dots,a_k,c_1,\dots,c_l)^\n=\phi(a_1,\dots,a_k,c_1,\dots,c_l)^\M \text{.} $$
Then standard arguments about infima imply
$$\inf_{y_1}\dots\inf_{y_l}\phi(a_1,\dots,a_k, y_1,\dots,y_l)^\M=\inf_{y_1}\dots\inf_{y_l}\phi(a_1,\dots,a_k, y_1,\dots,y_l)^\n \text{.} $$
Therefore, $\M$ is an existentially closed model of $\rbRT_r$.
\end{proof}

\section{Properties of the theory \texorpdfstring{$\rbRT_r$}{rbRTr}}
\label{propertiesrbrt}
In this section we show that $\rbRT_r$ has
quantifier elimination and is complete and stable, but not superstable.  We characterize types, and show that
the space of 2-types over the empty set has metric density $2^\omega$. We also characterize definable closure 
and algebraic closure in models of $\rbRT_r$. 

\begin{notation}
If $A = A_1,\dots,A_n$ is a finite sequence of subsets of a model, we will use the shorthand notation
$A_1 \dots A_n$ for the union $A_1 \cup \dots \cup A_n$.  If some $A_j$ has a single element, we write
the element instead of the set; for example $ABC$ stands for $A \cup B \cup C$ and $Ap$ stands for $A \cup \{p\}$.  
This notation will be used mainly when we are considering types over sets of parameters.
\end{notation}

\begin{lemma}
\label{qe}
The $L_r$-theory $\rbRT_r$ has quantifier elimination.
\end{lemma}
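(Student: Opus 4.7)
The plan is to invoke the general model-theoretic machinery already assembled in the paper: by Theorem \ref{modcomp}, $\rbRT_r$ is the model companion of $\RT_r$, and by Theorem \ref{amalg}, the base theory $\RT_r$ has amalgamation over substructures. These are exactly the hypotheses of Proposition \ref{modelcompanionplusamalg}, which transfers quantifier elimination from the amalgamation-plus-model-companion setup. So the proof is essentially a one-line citation of these three earlier results.

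More concretely, the argument I would write is: let $T_1 = \RT_r$ and $T_2 = \rbRT_r$. Theorem \ref{modcomp} gives that $T_2$ is the model companion of $T_1$, and Theorem \ref{amalg} gives that $T_1$ has amalgamation over substructures. Proposition \ref{modelcompanionplusamalg} then immediately yields quantifier elimination for $T_2 = \rbRT_r$.

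There is no real obstacle here, since all the substantive work has been done in earlier sections: setting up the signature so that substructures make sense (Section 4), proving the amalgamation lemma for $\R$-trees and lifting it through the completion and base-point constructions to amalgamation for $\RT_r$-substructures (Section 6), and identifying the existentially closed models as the richly branching ones so that Robinson's criterion can be applied (Section 7). The only mildly nontrivial conceptual point worth flagging in the write-up is that quantifier elimination in continuous logic means every formula is a uniform limit of quantifier-free formulas (in the sense of \cite{N,BU}); this is exactly the conclusion that the continuous-logic version of \cite[Theorem 8.4.1]{Ho} (as cited in Proposition \ref{modelcompanionplusamalg}) delivers, so nothing further needs to be checked.
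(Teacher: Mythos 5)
Your proposal is correct and is exactly the paper's proof: the authors also cite Theorem \ref{modcomp}, Theorem \ref{amalg}, and Proposition \ref{modelcompanionplusamalg} in a one-line argument. Nothing further is needed.
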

\begin{proof}
By Theorem \ref{modcomp}, Theorem \ref{amalg} and Proposition \ref{modelcompanionplusamalg}.
\end{proof}

\begin{corollary}
The $L_r$-theory $\rbRT_r$ is complete.
\end{corollary}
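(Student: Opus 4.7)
The plan is to deduce completeness of $\rbRT_r$ directly from the quantifier elimination just established in Lemma \ref{qe}, exploiting the extreme poverty of the signature $L_r$, which contains only the metric and a single constant symbol $p$.

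The first observation is that the only closed $L_r$-term is $p$, so the only atomic $L_r$-sentence is $d(p,p)$, and this takes the value $0$ in every $L_r$-structure. Every quantifier-free $L_r$-sentence is obtained from atomic $L_r$-sentences by applying continuous connectives, so the value of any quantifier-free $L_r$-sentence is a fixed real number that does not depend on the chosen structure.

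Next I would invoke Lemma \ref{qe} in the form appropriate to continuous logic: for every $L_r$-sentence $\sigma$ and every $\epsilon > 0$ there is a quantifier-free $L_r$-sentence $\sigma_\epsilon$ with $\lvert \sigma^{\M} - \sigma_\epsilon^{\M}\rvert \leq \epsilon$ for all $\M \models \rbRT_r$. Combined with the previous paragraph, this forces $\sigma^{\M}$ to be independent of the choice of $\M \models \rbRT_r$, so any two models of $\rbRT_r$ assign the same real value to every $L_r$-sentence. Consistency of $\rbRT_r$ is already guaranteed by Theorem \ref{rbrt} together with the existence of a complete richly branching pointed $\R$-tree of radius $r$, so completeness follows.

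The argument is essentially immediate once QE is in hand, so the only subtlety is to invoke the correct continuous-logic form of QE (approximate equivalence rather than syntactic equality). As an alternative route, one could argue by amalgamation: the one-point substructure $\{p\}$ embeds in every model of $\rbRT_r$, and using Theorem \ref{amalg} together with model completeness (a consequence of QE via Proposition \ref{allec}) one shows that any two models have a common elementary extension and are therefore elementarily equivalent.
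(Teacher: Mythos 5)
Your proposal is correct and is essentially the paper's argument: the paper notes that the one-point structure $\{p\}$ embeds in every model, so all quantifier-free sentences take the same value everywhere, and quantifier elimination then forces every sentence to do so; you have simply spelled out the (correct) approximate form of QE needed in continuous logic. The alternative amalgamation route you sketch at the end also works but is not needed.
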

\begin{proof}
In any model of $\rbRT_r$ we may embed the structure consisting of just the base point.
This fact together with quantifier elimination implies that $\rbRT_r$ is complete.
\end{proof}

We now turn our attention to a discussion of types and type spaces.
Let $\M\models \rbRT_r$. Recall that if $b=b_1,\dots,b_n$ is tuple of elements in $\M$, then $\tp_\M(b/A)$ 
is the complete $n$-type of $b$ over $A$ in $\M$.
The space of all $n$-types over $\emptyset$ in models of a theory $T$ is denoted $S_n(T)$. The space
of $n$-types over $A$ is denoted $S_n(T_A)$ or $S_n(A)$ when the theory $T$ is
clear from context. If $q$ is an $n$-type and $a$ is an  $n$-tuple from $M$ such that $\M\models q(a)$ we say $a$ 
\emph{realizes} $q$ in $\M$, and write $a\models q$. If $\phi(x_1,\dots, x_n)$ is an $L_r$-formula,
we write $\phi(x_1,\dots, x_n)^q$ for the value of this formula specified by the type $q$. 

The following lemma gives criteria for when two $n$-tuples have the same type.
Recall that for a subset $A$ of a given $\R$-tree $M$
$$E_A=\bigcup\{[a_1,a_2]\mid a_1, a_2\in A\}$$
is the smallest subtree of $M$ containing $A$.
The closure $\overline{E_A}$ is the smallest closed subtree containing $A$.
\begin{lemma}
\label{determinetypes}
Let $\M\models \rbRT_r$. Fix $A\subseteq M$.
\begin{enumerate}
\item Let $b,c \in M$. Then $\tp_\M(b/A)=\tp_\M(c/A)$ if and only if
$b$ and $c$ have the same unique closest point $e\in \overline{E_{Ap}}$ and
$d(b,e)=d(c,e)$.
\item Let $b=(b_1,\dots,b_n)$ and $c=(c_1,\dots,c_n)$ be tuples in $M$. 
Then $\tp_\M(b/A)=\tp_\M(c/A)$ if and only if $\tp_\M(b_i/A)=\tp_\M(c_i/A)$
for all $1 \leq i \leq n$ and $d(b_i,b_j)=d(c_i,c_j)$ for all $1 \leq i,j \leq n$.
\item Let $b=(b_1,\dots,b_n)$ and $c=(c_1,\dots,c_n)$ be tuples in $M$. Let 
$E_b$ be the subtree of $M$ spanned by $Apb$, and similarly define $E_c$.
Then $\tp_\M(b/A)=\tp_\M(c/A)$ if and only if there is an isometry $f\colon E_b\rightarrow E_c$
that fixes each element of $Ap$ (or equivalently of $\overline{E_{Ap}}$) and satisfies $f(b_i)=c_i$ for all $i=1,\dots,n$.
\end{enumerate}
\end{lemma}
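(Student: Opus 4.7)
My plan is to invoke quantifier elimination (Lemma \ref{qe}) as the backbone of all three parts. Since the only non-logical symbol in $L_r$ is the constant $p$ (which lies in $A$ by the standing convention) and the metric is the only predicate, the quantifier-free type of a tuple over $A$ is completely determined by the collection of distances $\{d(b_i,b_j)\}\cup\{d(b_i,a):a\in A\}$. So at bottom each item of the lemma is a geometric rephrasing of the condition ``all these distances agree between $b$ and $c$''.

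For (1), the direction $(\Leftarrow)$ is immediate: if $e$ is the common closest point in $\overline{E_A}$ to both $b$ and $c$, then for each $a\in A\subseteq \overline{E_A}$ the remark after Lemma \ref{!geo} places $e$ on $[a,b]$ and on $[a,c]$, so $d(a,b)=d(a,e)+d(e,b)$ and $d(a,c)=d(a,e)+d(e,c)$ agree. For $(\Rightarrow)$, let $e_b,e_c$ denote the unique closest points in $\overline{E_A}$ to $b$ and $c$. The distance $d(b,e_b)$ equals $\inf\{(a_1\cdot a_2)_b:a_1,a_2\in A\}$ (the closest point to $b$ on $[a_1,a_2]$ sits at distance $(a_1\cdot a_2)_b$, and $E_A$ is dense in $\overline{E_A}$), and this infimum depends only on the distances from $b$ to $A$; hence $d(b,e_b)=d(c,e_c)$. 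To conclude $e_b=e_c$, observe that the point $Y(b,a_1,a_2)$ realizes the closest point to $b$ on $[a_1,a_2]$, and its position on $[a_1,a_2]$ is fixed by the Gromov product formula, so $Y(b,a_1,a_2)=Y(c,a_1,a_2)$ for every $a_1,a_2\in A$. Choosing $(a_1^{(n)},a_2^{(n)})$ with $(a_1^{(n)}\cdot a_2^{(n)})_b\to d(b,e_b)$ makes the common sequence of $Y$-points converge to both $e_b$ and $e_c$, since in an $\R$-tree a sequence $z_n\in \overline{E_A}$ with $d(b,z_n)\to d(b,e_b)$ automatically satisfies $d(z_n,e_b)\to 0$ (by the projection property: $e_b\in[b,z_n]$).

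Parts (2) and (3) then fall out. For (2), the forward direction is trivial, and the backward direction combines $d(b_i,b_j)=d(c_i,c_j)$ with the consequence of (1) applied coordinate-wise (which supplies $d(b_i,a)=d(c_i,a)$ for every $a\in A$), fully determining the quantifier-free type over $A$. For (3), $(\Leftarrow)$ is immediate from isometry. For $(\Rightarrow)$, the map $f_0\colon A\cup\{b_1,\dots,b_n\}\to A\cup\{c_1,\dots,c_n\}$ fixing $A$ and sending $b_i\mapsto c_i$ is distance-preserving by (2), and extends uniquely to an isometry $f\colon E_b\to E_c$ via the standard fact that a subtree spanned by a set of points in an $\R$-tree is functorial in distance-preserving maps of generating sets: each segment $[x,y]\subseteq E_b$ with $x,y\in A\cup b$ is parameterized by distance from $x$ and sent to the corresponding point of $[f_0(x),f_0(y)]$, and well-definedness at segment intersections is a routine check using $\R$-tree combinatorics in the spirit of Lemma \ref{chiswell add to points}. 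The equivalence of ``fixes $A$'' and ``fixes $\overline{E_A}$'' is automatic: any isometry fixing $A$ pointwise fixes each segment $[a_1,a_2]$ (uniqueness of geodesics), hence all of $E_A$, and hence $\overline{E_A}$ by continuity.

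The main obstacle is the step $e_b=e_c$ in part (1) when $A$ is infinite: the infimum $d(b,e_b)=\inf(a_1\cdot a_2)_b$ need not be attained on any single segment, so one cannot simply read off $e_b$ from one pair in $A$. The convergence argument above is the natural workaround, exploiting the strong projection rigidity in $\R$-trees. The only other non-routine point is extending the isometry in (3) from the generating set to $E_b$, but this is standard $\R$-tree bookkeeping along the lines of Lemma \ref{rtreeamalg}.
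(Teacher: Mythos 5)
Your proposal is correct and follows essentially the same route as the paper: quantifier elimination reduces equality of types over $A$ to agreement of the distance data, part (1) is the geometric core via closest-point projections onto $\overline{E_A}$, and (2) and (3) follow by the same reductions (pairwise distances plus coordinatewise types, and the unique extension of a distance-preserving map on a spanning set to the spanned subtrees). The only divergence is that in the forward direction of (1) the paper simply asserts that equal distance functions on $\overline{E_A}$ force the unique minimizers to coincide, whereas you give a more explicit limiting argument with the points $Y(b,a_1,a_2)$ and the projection identity $e_b\in[b,z_n]$; that extra care is sound and, if anything, fills in a step the paper leaves terse.
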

\begin{proof} 
To show (1), first assume $\tp_\M(b/A)=\tp_\M(c/A)$.  By Lemma \ref{0hyp}(3) this implies
$d(b,a)=d(c,a)$ for all $a\in \overline{E_{Ap}}$, which means $b$ and $c$ must
have the same unique closest point  $e\in \overline{E_{Ap}}$.
Moreover, $d(b, e)$ must equal $d(c,e)$.

For the other direction, assume $b$ and $c$ have the same
unique closest point  $e\in  \overline{E_{Ap}}$ and that $d(b, e)=d(c,e)$.
Since $\rbRT_r$ has quantifier elimination,  and the values of quantifier-free formulas
are determined by the values of atomic formulas, it suffices to show
$d(a, b)=d(a,c)$ for all $a\in Ap$.  This follows easily
from our assumptions using Lemma \ref{branch}, since for any $a\in Ap$,
the point $e$ must be on both segments $[a, b]$ and $[a,c]$.

Statement (2) follows from part (1) and the fact that $\rbRT_r$ admits quantifier elimination.
Note that $\tp_\M(b_i/A)=\tp_\M(c_i/A)$
for all $1 \leq i \leq n$ if and only if $d(b_i, a)=d(c_i, a)$ for all $a\in Ap$. 
Again, we use that the values of quantifier-free formulas are determined
by the values of the atomic formulas. The atomic formulas involved are
all of the form $d(t_1, t_2)$ where $t_1$ and $t_2$ are variables
$x_1,\dots,x_n$ or parameters from $Ap$.

Statement (3) follows from (2) and Lemma \ref{0hyp}.
\end{proof}

With Lemma \ref{determinetypes} in hand, we can finish the characterization of $\kappa$-saturated $\R$-trees, 
completing the result promised in Section \ref{model companion}.

\begin{theorem}
\label{kappasaturated=kappabranching}
Let $\M\models \rbRT_r$.
Let $\kappa$ be an infinite cardinal.
Then $\M$ is $\kappa$-saturated if and only if $M$ has at least $\kappa$-many branches
at every point $a\in M$ where $d(p, a)<r$.
\end{theorem}
\begin{proof}
The forward direction is Lemma \ref{saturatedrbrt}.
Now, assume $M$ has $\kappa$-many
branches at every point $a\in M$ where $d(p, a)<r$.  
Let $A\subseteq M$ have cardinality $< \kappa$.

Let $q$ be a $1$-type over $A$.
By Lemma \ref{determinetypes}, this type is determined by a
closest point $e\in \overline{E_{Ap}}$ and a distance $0\leq s\leq r-d(p, e)$,
where $s$ is the distance between $\overline{E_{Ap}}$ and any realization of $q$.
Suppose $\beta$ is a branch in $M$ at $e$.
The complement of $\beta$ is path connected and closed, so if the complement 
contains all of $Ap$, then it contains $\overline{E_{Ap}}$.
Hence, if $\beta$ contains a point in $\overline{E_{Ap}}$, it must contain an element of $Ap$.

Because there are $\kappa>|Ap|$-many branches at $e$, 
there are branches at $e$ in $M$ that do not intersect $\overline{E_{Ap}}$.
Let $\beta$ be one of those branches.  Lemma \ref{branches of sufficient height} 
allows us to take $x \in \beta$ such that $d(p,x) = r$, and hence
$d(x,e) = r-d(p,e)$, since $p$ is not in $\beta$.  Since $0\leq s\leq r-d(p, e)$ we 
may take $b$ on the segment $[e,x]$ with $d(e,b)=s$.
Then $b$ satisfies the type $q$.  Therefore, all $1$-types over $A$ are realized 
in $\M$, implying that $\M$ is $\kappa$-saturated.
\end{proof}

Our next result gives a description of the type space $S_n(A)$ with its logic topology,
where $A$ is a subset of a model $\M$ of $\rbRT_r$.  If $q \in S_n(A)$, we take the free
variables used in $q$ to be the distinct variables $x_1,\dots,x_n$ and require that 
$X = \{x_1,\dots,x_n\}$ be disjoint from $Ap$.  Note that $q$ induces a pseudometric, which
we denote $\rho_q$, on $ApX$, by setting $\rho_q(t_1,t_2) = d(t_1,t_2)^q$ for each $t_1,t_2 \in ApX$.  That is, if
$(b_1,\dots,b_n)$ realizes $q(x_1,\dots,x_n)$ in some elementary extension $\n$ of $\M$, and we let
$\pi \colon ApX \to N$ be the evaluation map taking each $a \in Ap$ to itself and taking each $x_i$ to
$b_i$, then we are setting $\rho_q(t_1,t_2)$ equal to $d^\n(\pi(t_1),\pi(t_2))$.  Note that $\pi$ can be viewed
as the quotient map from the pseudometric space $(ApX,\rho_q)$ to the metric space $(Ap\{b_1,\dots,b_n\},d^\n)$, which is
$0$-hyperbolic since $N$ is an $\R$-tree.  It follows that $\rho_q$ is $0$-hyperbolic on $ApX$.
(See Definition \ref{defhyperbolic} and Lemma \ref{0hyp}.)

For each $q \in S_n(A)$, we regard $\rho_q$ as an element of the product space $[0,2r]^{(ApX)^2}$, to which we
give the product topology.  Let $\HM(ApX)$ denote the set of all $\rho \in [0,2r]^{(ApX)^2}$ such that $\rho$ defines
a $0$-hyperbolic pseudometric on $ApX$ extending $d^\M$ on $Ap$ and satisfying
$\rho(p,x_i) \leq r$ for all $i=1,\dots,n$.  This is a closed subset of $[0,r]^{(ApX)^2}$ and hence 
it is a compact Hausdorff space in the induced topology.

\begin{lemma}\label{n types over A}
Let $\M\models \rbRT_r$ and $A\subseteq M$.
\newline
(1) The image of the map $q \mapsto \rho_q$ defined above on $S_n(A)$ is $\HM(ApX)$. 
\newline
(2) The map $q \mapsto \rho_q$ is a homeomorphism between $S_n(A)$ with the logic topology and $\HM(ApX)$
with the induced topology.
\end{lemma}
\begin{proof}  Let $\M \models \rbRT_r$ and let $A \subseteq M$.

(1)  Suppose $\rho \in \HM(ApX)$.  Recall that the quotient metric space of $(ApX,\rho)$ is $0$-hyperbolic (see the comment 
just before Lemma \ref{triangle property}) and the quotient map from $(ApX,\rho)$ to that metric space is isometric.
Therefore, by Lemma \ref{0hyp}(2) there is an isometric mapping $f$ from $(ApX,\rho)$ into an $\R$-tree $(N,d)$.  We
may assume that $d(f(p),u) \leq r$ for all $u \in N$ and that $(N,d)$ is complete, and hence that $\n = (N,d,f(p)) \models \RT_r$. 
By Theorem \ref{modcomp} we may assume that $\n$ is a $\kappa$-saturated and strongly $\kappa$-homogeneous model of $\rbRT_r$, where
$\kappa$ is an infinite cardinal $> |A|$.  Since $\rbRT_r$ admits QE we may assume that $Ap \subseteq N$ and that $f$ is the identity on
$Ap$.  Now let $b = (b_1,\dots,b_n) = (f(x_1),\dots,f(x_n)) \in N^n$, and let $q(x_1,\dots,x_n) = tp(b/A)$ in $\n$.  It is easy to see that
$\rho = \rho_q$, and hence $\rho$ is in the image of the map $q \mapsto \rho_q$ as desired.

(2)  Quantifier elimination for $\rbRT_r$ implies that the map $q \mapsto \rho_q$ is 1-1 and continuous for the logic topology on 
$S_n(A)$ and the induced topology on $\HM(ApX)$, and this map is surjective by part (1).  
Both topologies are compact Hausdorff, and hence the map must be a
homeomorphism.
\end{proof}

Next, we give some results about the type spaces as metric spaces.
First, a reminder of how the induced metric on types is defined.

Let $\M\models \rbRT_r$ be such that every type in $S_n(\rbRT_r))$ is realized in $\M$ for each $n\geq 1$.
As defined in 
Ben Yaacov, Berenstein, Henson and Usvyatsov \cite{N}, 
the $d$-metric on $n$-types over the empty set is:
$$d(q,q')=\inf_{b\models q,\ b'\models q'}\ \max_{i=1,\dots,n}\  d^\M(b_i, b'_i)$$
where $b=b_1,\dots,b_n$ and $b'=b'_1,\dots,b'_n$ are tuples in $\M$.
Note that since $\M$ realizes all $2n$-types, the infimum in the definition is actually realized. This definition 
can be extended in the obvious way to spaces of types over parameters.

First we consider the distance between $1$-types over $A$ for the theory $\rbRT_r$.

\begin{lemma} 
\label{howmanytypes}
 Let $\M \models \rbRT_r$ and $A \subseteq M$.
\begin{enumerate}
\item As a metric space, $S_1(\rbRT_r)=S_1(\emptyset)$  is isometric to $[0, r]$.
\item More generally,  $S_1(A)$ is 
in bijective correspondence with the set of ordered pairs
$$\{(e,s)\mid e\in \overline{E_{Ap}} \text{ and } s\in [0,r-d(p,e)]\subseteq \R\} \text{.}$$
Indeed, $q \in S_1(A)$ is associated to $(e,s)$ exactly when $e$ is the unique
closest point in $\overline{E_{Ap}}$ to any realization $b$ of $q$ (in an elementary
extension $\n$ of $\M$) and $s = d^\n(e,b)$; we will denote this distance $s$ by
$d(e,x)^q$.
\item Given $q,q'\in S_1(A)$ let $e_q$, $e_{q'}$ be the unique
closest points in $\overline{E_{Ap}}$ to realizations of $q$, $q'$ respectively.
\begin{enumerate}
\item  If $e_q\not=e_{q'}$, then
$d(q,q')= d(e_q,x)^q+d(e_q, e_{q'})+d(e_{q'},x)^{q'}$.
\item If $e_q=e_{q'}$, then $d(q,q')=|d(p,x)^q-d(p,x)^{q'}|$.
\end{enumerate}
\item  The metric space $(S_1(A),d)$ is an $\R$-tree, isometric to the $\R$-tree
obtained from $\overline{E_{Ap}}$ by amalgamating an additional branch $\beta_e$ at $e$
that is isometric to the interval $(0,r-d(p,e)]$, for each $e \in \overline{E_{Ap}}$.
\end{enumerate}
\end{lemma}
\begin{proof}  Let $\M \models \rbRT_r$ and $A \subseteq M$.  

(1) Lemma \ref{n types over A} gives a bijection $f$ from $S_1(\emptyset) = S_1(\rbRT_r)$
onto $[0,r]$, in which each type $q(x)$ is mapped to $d(p,x)^q$.  We show that $f$ is isometric.
Given two 1-types $q,q' \in S_1(\rbRT_r)$
a configuration minimizing the distance between realizations $b\models q$
and $b'\models q'$ is the one where $b,b'$ and $p$ are arranged along a piecewise segment.
That configuration makes
$d(b, b')=|d(p, b)-d(p, b')|$ which is the least possible value of $d(b,b')$. by the triangle inequality.
Thus, $d(q,q')=d(b, b')=|d(p, b)-d(p, b')|=|f(q)-f(q')|$, showing that $f$ is an isometry.

(2) This is a consequence of Lemmas \ref{determinetypes}, \ref{n types over A}, and \ref{0hyp}(3).
Lemma \ref{determinetypes}(1) shows that the pair $(e,s)$ associated to $q$ in (2) is indeed
determined by $q$.  
A description of how to find $b$ realizing a type $q$ associated to $(e,s)$ 
in a sufficiently saturated elementary extension of $\M$ is given in the last
paragraph of the proof of Theorem \ref{kappasaturated=kappabranching}.

(3) Let $q,q' \in S_1(A)$.  
First assume $e_q\not=e_q'$. Then for any realizations $b\models q$ and $b'\models q'$,
$[b, e_q, e_{q'}, b']$ is a piecewise segment by Lemma \ref{differentclosestpoints}, and the conclusion
follows.  If $e_q=e_{q'}$, then as in the proof of (1), a minimizing configuration will have the three
points $b\models q$ and $b'\models q'$ and $p$ along a geodesic segment, and the conclusion follows.

(4) This is immediate from (2) and (3).
\end{proof}

Note that part (2) in Lemma \ref{howmanytypes}  
gives a description of $S_1(A)$ that is different from the one in Lemma \ref{n types over A}.
It is possible to give an analogous description of $S_n(A)$ when $n > 1$.  
Let $x_1,\dots,x_n$ be distinct variables, with $X = \{x_1,\dots,x_n\}$ disjoint from $\overline{E_{Ap}}$.
Suppose we are given $q(x_1,\dots,x_n) \in S_n(A)$  and a realization $(b_1,\dots,b_n)$ of $q$ in
some model $\M$ of $\rbRT_r$ containing $A$.  Let $\pi$ be the function on the set
$\overline{E_{Ap}}X$ defined by $\pi(e)=e$ for $e \in \overline{E_{Ap}}$ and $\pi(x_i)=b_i$ for $i = 1,\dots,n$.
According to parts (1) and (2) of Lemma \ref{determinetypes}, the type $q$ is determined by the 
pseudometric $\eta = \eta_q$ on $\overline{E_{Ap}}X$  that is defined by setting
$\eta(t_1,t_2) = d^\M(\pi(t_1),\pi(t_2))$ for all $t_1,t_2 \in \overline{E_{Ap}}X$.  The key properties
of $\eta$ are the following:
\newline
(a) $\eta$ agrees with $d^\M$ on $\overline{E_{Ap}}$;
\newline
(b) $\eta(p,x_i) \leq r$ for all $i=1,\dots,n$; 
\newline
(c) $\eta(e,x_i) = \eta(e,e_i) + \eta(e_i,x_i)$ for all $e \in \overline{E_{Ap}}$, $i=1,\dots,n$; and
\newline
(d) $\eta$ is $0$-hyperbolic.
\newline
For every such pseudometric $\eta$ there is an $n$-type $q \in S_n(A)$ for which $\eta = \eta_q$.  
Indeed, by Lemma \ref{0hyp}, such a pseudometric $\eta$ is determined by its restriction to the set 
$ApX$, which is an element of $\HM(ApX)$ defined above.  Thus it corresponds to a type in $S_n(A)$ 
by Lemma \ref{n types over A}.

For $n>1$ it seems to be very complex to give a precise description, as in the preceding lemma, of the metric on 
$S_n(A)$ over the theory $\rbRT_r$. Accordingly, we limit ourselves to giving (in the next result) a precise
statement of the metric density of the space of 2-types over $\emptyset$. Its proof illustrates
some of the ideas needed to understand these metric spaces more completely.

\begin{proposition}
The space of $2$-types over the empty set has metric density character equal to $2^\omega$.
\end{proposition}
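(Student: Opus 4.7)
The plan is to establish that $S_2(\rbRT_r)$ has metric density character equal to $2^\omega$ by proving matching upper and lower bounds. For the upper bound, since the signature $L_r$ is countable, every element of $S_2(\rbRT_r)$ is determined by countably many real numbers (its values on the $L_r$-formulas in two free variables), so $|S_2(\rbRT_r)| \le 2^\omega$, which bounds the density character as well.

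For the lower bound, I will produce a continuum-sized family of types that is pairwise $r$-separated in the $d$-metric. For each $t \in [r, 2r]$, let $q_t \in S_2(\rbRT_r)$ be the type specified by $d(x_1, p) = r$, $d(x_2, p) = r$, and $d(x_1, x_2) = t$. Existence and uniqueness of $q_t$ follow from Lemma~\ref{n types over A} with $e_1 = e_2 = p$ (the required compatibility of distances is immediate since $t \le 2r$), and the $q_t$ are pairwise distinct by Lemma~\ref{determinetypes}(2). Since $|[r, 2r]| = 2^\omega$, once I show $d(q_t, q_{t'}) \ge r$ for distinct $t, t' \in [r, 2r]$, the density character of $S_2(\rbRT_r)$ must be at least $2^\omega$.

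To establish this separation, fix $t < t'$ in $[r, 2r]$, and let $(a_1, a_2) \models q_t$ and $(b_1, b_2) \models q_{t'}$ be arbitrary realizations in a common $\R$-tree. Write $\epsilon_i = d(a_i, b_i)$, $\alpha = d(a_2, b_1)$, and $\beta = d(a_1, b_2)$. By Lemmas~\ref{0hyp} and~\ref{four point condition implies triangle}, every 4-element subset of $\{p, a_1, a_2, b_1, b_2\}$ satisfies the 4-point condition, i.e., the maximum of the three corresponding distance sums is attained at least twice. Applied to $\{p, a_1, b_1, b_2\}$, whose sums are $r + t'$, $r + \beta$, and $r + \epsilon_1$, this forces $\max\{\beta, \epsilon_1\} \ge t'$; analogously, applied to $\{p, a_2, b_1, b_2\}$, it yields $\max\{\alpha, \epsilon_2\} \ge t'$. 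Suppose for contradiction that $\max\{\epsilon_1, \epsilon_2\} < t'$. Then $\alpha, \beta \ge t'$, so $\alpha + \beta \ge 2t' > t + t'$ and $\alpha + \beta > \epsilon_1 + \epsilon_2$. But the 4-point condition on $\{a_1, a_2, b_1, b_2\}$ requires the maximum among $t + t'$, $\epsilon_1 + \epsilon_2$, and $\alpha + \beta$ to be attained at least twice, which these strict inequalities forbid. Hence $\max\{d(a_1, b_1), d(a_2, b_2)\} \ge t' \ge r$, yielding $d(q_t, q_{t'}) \ge r$.

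The main obstacle is the 4-point case analysis: one must pick the right 4-subsets of $\{p, a_1, a_2, b_1, b_2\}$ and combine the resulting constraints to rule out $\max\{\epsilon_1, \epsilon_2\} < t'$. The underlying intuition is that the large distance $d(b_1, b_2) = t'$ creates a discrepancy that the cross-distances $\alpha, \beta$ alone cannot absorb within an $\R$-tree; $0$-hyperbolicity forces the tension to show up in at least one of the $\epsilon_i$.
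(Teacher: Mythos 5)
Your proof is correct, and it follows the same global strategy as the paper -- exhibit a continuum-sized family of $2$-types that is pairwise $r$-separated in the $d$-metric, then note the trivial cardinality upper bound -- but it executes the key separation step quite differently. The paper chooses, for each parameter, a pair whose span with $p$ is a symmetric tripod, and then proves separation by a geometric analysis of minimizing configurations: it argues where the points $Y_s'$ and $Y_t'$ must sit relative to one another and tracks which realization lands on which branch. Your version instead normalizes both coordinates to lie on the sphere of radius $r$ and lets $d(x_1,x_2)=t$ vary, and the separation $\max\{d(a_1,b_1),d(a_2,b_2)\}\ge t'$ falls out of three applications of the four-point condition (Lemmas~\ref{0hyp} and~\ref{four point condition implies triangle}) with no case analysis of configurations at all. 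This buys two things: the argument is purely combinatorial and shorter, and it sidesteps a parametrization glitch in the paper's construction (as written there, $d(p,a_s)=2s$ with $s\in[\tfrac{r}{2},r]$ exceeds the radius bound $r$ for $s>\tfrac{r}{2}$, so the paper's distances need rescaling before its argument is literally applicable). Two small points worth tightening: the realizations $(a_1,a_2)$ and $(b_1,b_2)$ should be taken in a single sufficiently saturated model realizing all types, which is exactly how the $d$-metric is defined, so this is only a matter of saying it; and for the upper bound it is cleaner to cite quantifier elimination via Lemma~\ref{determinetypes}, which shows a $2$-type over $\emptyset$ is determined by the three reals $d(x_1,p)$, $d(x_2,p)$, $d(x_1,x_2)$, rather than appealing to a countable dense family of formulas.
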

\begin{proof} 
It is clear that $S_2(\rbRT_r)$ has cardinality $2^\omega$.  Hence it suffices to find, for some $\delta>0$, 
a set of $2^\omega$-many types in $S_2(\rbRT_r)$ that are pairwise at distance $\geq \delta$.

For each $s \in [0,r]$ let $q_s(x,y) \in S_2(\rbRT_r)$ contain the conditions $d(p,x)=d(p,y)=r$ and $d(x,y)=2s$.
Note that for every $s$ there is a unique such type.  If $(a_s,b_s)$ realizes $q_s$ in $\M \models \rbRT_r$ we let $Y_s = Y(p,a_s,b_s)$, 
and note that $d(a_s,Y_s)=d(b_s,Y_s)=s$ and $d(p,Y_s)=r-s$.

Suppose $0 \leq t < s \leq r$.  We will show that the distance between $q_s$ and $q_t$ is $\geq 2t$.  So if we take
$0 < \delta < r$, the types in  $\{ q_s \mid \delta \leq s \leq r \}$ are pairwise at distance $\geq 2\delta$, giving the
desired result.

So suppose for $0 \leq t < s \leq r$ we have $(a_s,b_s)$ realizing $q_s$ and $(a_t,b_t)$ realizing $q_t$, in a model $\M$ of $\rbRT_r$,
and that $d(a_s,a_t) \leq 2t$.  Letting $c = Y(p,a_s,a_t)$ we have $d(p,c) \geq r-t$, implying that $Y_s$ and $Y_t$ are
both in the geodesic segment $[p,c]$.  Then, since $Y_s \neq Y_t$, we have by Lemma \ref{differentclosestpoints} 
that $[b_s,Y_s,Y_t,b_t]$ is a piecewise segment, and hence
\[
d(b_s,b_t) = d(b_s,Y_s)+d(Y_s,Y_y)+d(Y_t,b_s) = s + d(Y_s,Y_t) + t > 2t \text{,}
\]
which completes the proof since the realizations of $q_s,q_t$ were arbitrary.
\end{proof}

\begin{remark*}
A similar argument shows that if $X$ is a metric $\R$-tree that contains an isometric copy of 
every 4-point 0-hyperbolic metric space of diameter at most r, 
then the density character of $X$ is at least $2^\omega$.  Likewise, if $a$ is a point different from $p$  in 
$\M \models \rbRT_r$, then the type space $S_1(\{a\})$ (relative to $\rbRT_r$) has density character exactly $2^\omega$.
\end{remark*}

Next, we show that the definable closure and the algebraic closure
of a set of parameters $A$ are the same, and are equal to the closed subtree spanned
by $Ap$.  

\begin{proposition}
\label{L: dcl and acl characterized}
Let $\M\models \rbRT_r$ with $(M,d,p)$ as its underlying $\R$-tree.
Let $A\subseteq M$ be a set of parameters. Then $\dcl(A)=\acl(A)=\overline{E_{Ap}}$.
\end{proposition}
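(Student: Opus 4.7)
The plan is to establish the two inclusions $\overline{E_A} \subseteq \dcl(A)$ and $\acl(A) \subseteq \overline{E_A}$, which, combined with the standard fact that $\dcl(A) \subseteq \acl(A)$, forces all three sets to coincide.

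\textbf{First inclusion.} For the inclusion $\overline{E_A} \subseteq \dcl(A)$, I would use Lemma \ref{determinetypes}(1). Given $c \in \overline{E_A}$, the unique closest point to $c$ in $\overline{E_A}$ is $c$ itself, and $d(c,c) = 0$. By that lemma, any $c'$ realizing $\tp(c/A)$ in any elementary extension must also have $c$ as its closest point in $\overline{E_A}$ and satisfy $d(c',c) = 0$, so $c' = c$. Hence $\tp(c/A)$ has exactly one realization, placing $c$ in $\dcl(A)$.

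\textbf{Second inclusion.} For $\acl(A) \subseteq \overline{E_A}$, I would argue contrapositively: suppose $b \in M \setminus \overline{E_A}$, and show that $\tp(b/A)$ has a non-compact set of realizations in some elementary extension, so that $b \notin \acl(A)$. Let $e$ be the unique closest point to $b$ in $\overline{E_A}$ and set $s = d(b,e) > 0$. Since $p \in A$, Lemma \ref{branch} places $e$ on the segment $[p,b]$, so $s \leq r - d(p,e)$. Fix a cardinal $\kappa$ strictly larger than $|A| + \aleph_0$ and pass to a $\kappa$-saturated elementary extension $\n$ of $\M$.

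\textbf{The key step.} By Lemma \ref{saturatedrbrt}, there are at least $\kappa$ branches at $e$ in $\n$. The set of branches at $e$ that meet $\overline{E_A}$ in a point other than $e$ has cardinality at most $|\overline{E_A}| \leq |A| + \aleph_0 < \kappa$, so we can choose a family $\{\beta_i \mid i < \kappa\}$ of distinct branches at $e$ in $\n$, each disjoint from $\overline{E_A} \setminus \{e\}$. Since $s \leq r - d(p,e)$, by Lemma \ref{branches of sufficient height} applied in each $\beta_i$ (combined with the continuity of the distance along a geodesic), we may pick a point $b_i \in \beta_i$ with $d(b_i, e) = s$. Each $b_i$ has $e$ as its unique closest point in $\overline{E_A}$ and satisfies $d(b_i,e) = s$, so by Lemma \ref{determinetypes}(1) each $b_i$ realizes $\tp(b/A)$. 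But $b_i$ and $b_j$ lie on distinct branches at $e$, so $d(b_i, b_j) = 2s > 0$ for $i \neq j$. The realization set of $\tp(b/A)$ thus contains a $2s$-discrete subset of cardinality $\kappa$ and cannot be compact. Therefore $b \notin \acl(A)$.

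\textbf{Main obstacle.} The delicate step is the cardinality bookkeeping at $e$: we need to ensure that in a suitably saturated extension there are more branches at $e$ than branches occupied by $\overline{E_A}$, so that Lemma \ref{saturatedrbrt} actually provides fresh branches in which to place the new realizations. Choosing $\kappa > |A| + \aleph_0$ handles this cleanly. The remainder of the argument is routine assembly of the previously established lemmas.
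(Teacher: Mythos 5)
Your proof is correct and reaches the paper's conclusion by a somewhat different route. For $\overline{E_A}\subseteq\dcl(A)$ the paper argues directly from definability: by Lemma \ref{definablemidpoints} the functions $\nu_s$ are quantifier-free definable, so every point of a segment $[a,b]$ with $a,b\in A$ lies in $\dcl(A)$, hence $E_A\subseteq\dcl(A)$, and then $\overline{E_A}\subseteq\dcl(A)$ since $\dcl(A)$ is closed. You instead combine Lemma \ref{determinetypes}(1) with the characterization of $\dcl$ via unique realizations of types (the standard fact from \cite{N} that $c\in\dcl(A)$ iff $c$ is the only realization of $\tp(c/A)$ in every elementary extension). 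That is equally valid and avoids the midpoint functions, at the cost of leaning on the abstract characterization rather than exhibiting an explicit definable distance predicate. For $\acl(A)\subseteq\overline{E_A}$ the two arguments share the same core idea --- produce infinitely many realizations of $\tp(b/A)$, pairwise at distance $2s>0$, sitting on distinct branches at the gate point $e$, so the realization set is non-compact --- but the paper manufactures the extra branches by an explicit amalgamation (Lemma \ref{chiswell add to points}, gluing infinitely many isometric copies of the branch containing $b$ at $e$), whereas you obtain them inside a $\kappa$-saturated elementary extension via Lemma \ref{saturatedrbrt}. Both work; the paper's version has the minor virtue of not needing to pass to a saturated model.

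One slip to repair: you assert $|\overline{E_A}|\leq |A|+\aleph_0$, which is false in general, since $\overline{E_A}$ contains entire geodesic segments (already $|A|=2$ gives cardinality $2^\omega$). This does not damage the argument. What you actually need is that fewer than $\kappa$ branches of $N$ at $e$ meet $\overline{E_A}\setminus\{e\}$; that number is at most $|A|$, because every branch of $\overline{E_A}$ at $e$ contains a point of $A$, or one can simply choose $\kappa>|\overline{E_A}|$, as the paper does in the proof of Lemma \ref{n types over A}.
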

\begin{proof}
If $a,b\in Ap$, then by Lemma \ref{definablemidpoints} any point in $[a,b]$ is in $\dcl(A)$. 
Then $E_{Ap}\subseteq \dcl(A)$ because $E_{Ap}$ is the union of all such geodesic segments.
Since $\dcl(A)$ is closed, $\overline{E_{Ap}}\subseteq \dcl(A)$. Combined with the fact that
$\dcl(A)\subseteq \acl(A)$, this gives $\overline{E_{Ap}}\subseteq \dcl(A)\subseteq \acl(A)$.
It remains to show $\acl(A)\subseteq \overline{E_{Ap}}$, which we do in the contrapositive. 

If $c\notin \overline{E_{Ap}}$, let $e$ be the unique closest point to $c$
in $\overline{E_{Ap}}$. This $e$ exists by Lemma \ref{!geo}.
Let $\beta$ be the branch at $e$ that contains $c$.
Using Lemma \ref{chiswell add to points}, construct
an extension $\n\models \rbRT_r$ of $\M$ which adds an infinite number of branches at $e$, 
each of which is isometric to $\beta$.   
By Lemma \ref{determinetypes}, on each of these branches is a realization of $\tp(c/A)$. The 
distance between any two such realizations is $2d(e, c)>0$. This gives us a non-compact set of 
realizations of $\tp(c/A)$. Therefore, $c\notin \acl(A)$. 
\end{proof}

To finish this section we show $\rbRT_r$ is stable, but not superstable ({ie}, it is strictly stable).

\begin{theorem}
\label{rbrtstability}
The theory $\rbRT_r$ is stable.
Indeed when $\kappa$ is an infinite cardinal, $\rbRT_r$ is $\kappa$-stable
if and only if $\kappa$ satisfies $\kappa^\omega = \kappa$.
\end{theorem}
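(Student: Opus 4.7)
The plan is to analyze the metric density character of the type spaces $S_n(A)$ using the description of types from Lemmas~\ref{determinetypes} and~\ref{n types over A} together with the metric formula in Lemma~\ref{how many types}. The key geometric fact is that two $1$-types $t,u \in S_1(A)$ with distinct closest points $e_t \neq e_u$ in $\overline{E_A}$ satisfy $d(t,u) = s_t + d(e_t,e_u) + s_u \geq s_t$, so a type with positive distance $s_t$ to $\overline{E_A}$ cannot be metrically approximated by types whose closest points differ from $e_t$. Consequently $\text{dens}(S_1(A))$ is controlled by the full \emph{cardinality} of $\overline{E_A}$, not merely its density character. Since $\overline{E_A}$ is a complete metric space of density character at most $\text{dens}(A) \leq \kappa$, its cardinality is bounded by $\text{dens}(A)^{\aleph_0} \leq \kappa^\omega$; this is where $\kappa^\omega$ enters.

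For the sufficient direction, suppose $\kappa^\omega = \kappa$ and $\text{dens}(A) \leq \kappa$. Then $|\overline{E_A}| \leq \kappa^\omega = \kappa$, and using the parameterization in Lemma~\ref{n types over A}, the set of types obtained by taking $n$-tuples of closest points drawn from \emph{all} of $\overline{E_A}$ together with \emph{rational} numerical data $(s_i, \rho_{i,j})$ is dense in $S_n(A)$ and has cardinality at most $\kappa^n \cdot \aleph_0 = \kappa$. Hence $\text{dens}(S_n(A)) \leq \kappa$ for every $n$, giving $\kappa$-stability.

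For the necessary direction, suppose $\kappa^\omega > \kappa$. I construct $A$ of cardinality $\kappa$ with $\text{dens}(S_1(A)) \geq \kappa^\omega$. Working in a $\kappa^+$-saturated model $\M$ of $\rbRT_r$, I inductively define $\{a_\sigma : \sigma \in \kappa^{<\omega}\}$ by setting $a_\emptyset = p$ and, at each stage, using the abundance of branches at $a_\sigma$ guaranteed by Theorem~\ref{kappasaturated=kappabranching} together with Lemma~\ref{embed} to place $\{a_{\sigma \smallfrown i} : i < \kappa\}$ on $\kappa$ distinct \emph{fresh} branches at $a_\sigma$, each at distance $r \cdot 2^{-(n+2)}$ from $a_\sigma$ when $|\sigma| = n$. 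Let $A = \{a_\sigma : \sigma \in \kappa^{<\omega}\}$; then $|A| = \kappa$. For each $f \in \kappa^\omega$ the sequence $\bigl(a_{f \restriction n}\bigr)_n$ is Cauchy with limit $a_f \in \overline{E_A}$ at distance $r/2$ from $p$. If $f \neq g$ first differ at position $n_0$, a direct computation using Lemma~\ref{branch} gives $d(a_f, a_g) = r \cdot 2^{-(n_0 - 1)} > 0$, so the $a_f$ are pairwise distinct and $|\overline{E_A}| \geq \kappa^\omega$. For each $f$ let $q_f$ be the $1$-type with closest point $a_f$ and distance $s = r/4$; then the $q_f$ are pairwise at distance $\geq r/2$ in $S_1(A)$, so $\text{dens}(S_1(A)) \geq \kappa^\omega > \kappa$ and $\rbRT_r$ is not $\kappa$-stable.

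The main obstacle is the inductive construction: at each stage I need $\kappa$-many branches at $a_\sigma$ that are disjoint from the subtree built so far. Since that subtree has cardinality $\leq \kappa$ and meets only $\leq \kappa$ branches at $a_\sigma$, the $\kappa^+$-many branches supplied by Theorem~\ref{kappasaturated=kappabranching} leave $\kappa^+$-many fresh ones available, and Lemma~\ref{embed} permits the required placements in $\M$. Keeping the edge lengths equal to $r \cdot 2^{-(n+2)}$ forces the telescoping total $\sum_{n} r \cdot 2^{-(n+2)} = r/2 < r$, so every $a_\sigma$ lies within the radius-$r$ ball and the construction stays inside a legitimate model of $\rbRT_r$.
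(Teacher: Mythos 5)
Your argument is correct and follows essentially the same route as the paper: the upper bound rests on $|\overline{E_A}|\leq \kappa^\omega=\kappa$ (the paper simply bounds the cardinality of $S_1(A)$ by $\kappa$ rather than exhibiting a dense set of rational-data types, which sidesteps the minor issue of perturbing the $s_i,\rho_{i,j}$ while preserving the 4-point condition), and the lower bound is the identical $\kappa^{<\omega}$-indexed tree construction with telescoping edge lengths summing to $r/2$, yielding $\kappa^\omega$ distinct limit points and $1$-types pairwise at distance at least $r/2$. No changes needed.
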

\begin{proof}
Let $\kappa$ be an infinite cardinal.
Let $\M\models \rbRT_r$ be $\kappa^{+}$-saturated, with underlying $\R$-tree $(M,d,p)$.\\
First, assume $\kappa=\kappa^\omega$.
Let $|A|=\kappa$. Then
\[
|E_{Ap}|\leq |A\times A|2^\omega=\kappa^2 2^\omega
\leq \kappa^\omega 2^\omega=\kappa^\omega=\kappa \text{.}
\]
Thus, $|E_{Ap}|=\kappa$.
We count possible $1$-types using Lemma \ref{howmanytypes}, showing that
\[
|S_1(A)|\leq |\overline{E_{Ap}}\times [0,r]|=|\overline{E_{Ap}}|2^\omega\leq
|E_{Ap}|^\omega 2^\omega=\kappa^\omega 2^\omega=\kappa^\omega=\kappa \text{.}
\]
Thus $\rbRT_r$ is $\kappa$-stable.

For the other direction, assume $\kappa<\kappa^\omega$. We construct, via a tree construction, 
a subset $A$ of $M$ with $|A|=\kappa$ and
$|\overline{E_{Ap}}|=\kappa^\omega$.
At Step 1, we choose $\kappa$-many points $(a_i\mid i<\kappa)$ on distinct branches at $p$, 
each with distance ${r}/{4}$ from $p$.
We can do this since there are at least $\kappa$ branches of sufficient height at every point in $M$ by 
Theorem \ref{kappasaturated=kappabranching} and Lemma \ref{branches of sufficient height}. 
Note that we only need $\kappa$-saturation to guarantee $\kappa$-many branches.  
At Step 2, for each $a_{i}$ we choose $\kappa$-many points on distinct branches at $a_i$,
each with distance ${r}/{8}$ from $a_i$, and distance ${3r}/{8}$ from $p$. 
We index these points by $(a_{i,j}\mid i,j<\kappa)$.
At Step $n$ for $n\geq 2$, we have already designated points $a_{i_1, i_2,\dots,i_{n-1}}$ each
of which has distance
$\sum_{k=1}^{n-1} {r}/{2^{k+1}}$ from $p$. At each of these points choose $\kappa$-many
points on distinct branches at $a_{i_1, i_2,\dots,i_{n-1}}$,
each with distance ${r}/{2^{n+1}}$ from $a_{i_1, i_2,\dots,i_{n-1}}$, and distance $\sum_{k=1}^{n} {r}/{2^{k+1}}$ from $p$. 
We index these new points by $(a_{i_1, i_2,\dots,i_n}\mid i_1,\dots,i_n<\kappa)$. 
Let $A=\bigcup_{k=1}^\infty (a_{i_1,\dots,i_k}\mid i_1,\dots,i_k<\kappa)$. If we associate $p$
with the empty sequence, then the elements of $Ap$ are in 1-1 correspondence with $\kappa^{<\omega}$.
So, the cardinality of $A$ is $|\kappa^{<\omega}|=\kappa$.

If $\sigma$ and $\tau$ are non-empty finite sequences from $\kappa$, 
we have that $[p,a_\sigma,a_{\sigma,\tau}]$ is a 
piecewise segment in $M$.  (Here $\sigma,\tau$ denotes the concatenation of $\sigma$ and $\tau$.)
Moreover, for every such $\sigma$ and every distinct $i,j < \kappa$, the points $a_{\sigma,i}$, $a_{\sigma,j}$, and
$p$ are in distinct branches at $a_\sigma$.  

For each function $f\colon \omega \rightarrow \kappa$ with $f(0)=0$ we define a
sequence $(b^f_n)$ of elements of $A$ by setting $b^f_0=p$ and $b^f_n=a_{f(1),\dots,f(n)}$ for $n>0$.
For each $n \in \omega$ we have that $[b^f_0,\dots,b^f_n]$ is a piecewise segment in $M$.
Further, $b^f_k$ has distance ${r}/{2^{k+1}}$ from $b^f_{k-1}$ for every $k>1$,
making $(b^f_n)$ a Cauchy sequence. Since $M$ is complete, 
$b^f_n$ must converge to a limit, which we denote by $u_f$.  Note that $d(p,u_f) = r/2$; 
indeed, $d(b^f_n,u_f) = r/2^{n+1}$ for every $n$.

Let $f$ and $g$ be two such functions from $\omega$ to $\kappa$, and suppose 
$m>0$ is the first index at which $f(m)$ and $g(m)$ disagree.  An easy calculation shows that
$u_f$, $u_g$, and $p$ are in distinct branches at $b^f_{m-1} = b^g_{m-1}$ in $M$ and 
$d(u_f,u_g) = r/2^{m-1}$.  In particular, the map $f \mapsto u_f$ is injective, so
the set of all points $u_f$ has cardinality $\kappa^\omega$.

For each $u_f$ as above, let $\beta_f$ be the branch at $u_f$ in $M$ that contains $p$.
Evidently $A \subseteq \beta_f$; since $\beta_f \cup \{u_f\}$ is closed and path-connected, 
it must contain $\overline{E_{Ap}}$.  We choose a point $v_f \in M$ in a different branch at $u_f$
from $\beta_f$ such that $d(v_f,u_f) = r/2$.  Note that $u_f$ is the closest point to
$v_f$ in $\overline{E_{Ap}}$.

By Lemma \ref{determinetypes} the type of $v_f$ over $A$ is determined by $u_f$
and $d(v_f,u_f)$.  Further, by Lemma \ref{howmanytypes}(3), for distinct $f,g$ we have
\[
d(\tp(v_f/A),\tp(v_g/A)) = d(u_f,v_f) + d(u_f,u_g) + d(u_g,v_g) > r \text{.}
\]
Therefore, the metric density character of $S_1(A)$ is at least $\kappa^\omega$.  Since $A$
has cardinality $\kappa$ and $\kappa^\omega > \kappa$, this shows that the theory
$\rbRT_r$ is not $\kappa$-stable.
\end{proof}

\section{The independence relation for \texorpdfstring{$\rbRT_r$}{rbRTr}}

In this section we characterize the model theoretic independence relation of
$\rbRT_r$ and show that in models of $\rbRT_r$, types have canonical
bases that are easily-described sets of ordinary (not imaginary) elements.

Let $\kappa$ be a cardinal so that $\kappa=\kappa^\omega$ and $\kappa >2^\omega$.
In this section let $\U$ be a $\kappa$-universal domain for $\rbRT_r$.  (That is, $\U$ is
$\kappa$-saturated and $\kappa$-strongly homogeneous; see \
Ben Yaacov, Berenstein, Henson and Usvyatsov \cite[Definition 7.13]{N}
and the discussion preceding it.)
A subset of $\U$ is \emph{small} if its cardinality is $<\kappa$.
\begin{definition}
\label{starindependence}
Let $A,B$ and $C$ be small
subsets of $\U$.
Say A is ${}^*$-independent from B over C, denoted $A\ind[*]_C B$,
if and only if for all $a\in A$ we have $\dist(a,\overline{E_{BCp}})=\dist(a,\overline{E_{Cp}})$.
\end{definition}

\begin{lemma}
$A\ind[*]_C B$
if and only if for all $a\in A$ the closest point to $a$ in
$\overline{E_{BCp}}$ is the same as the closest point to $a$ in $\overline{E_{Cp}}$.
\end{lemma}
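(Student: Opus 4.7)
The plan is to exploit the uniqueness of closest points in closed subtrees, which is guaranteed by Lemma \ref{!geo} and the remark immediately following it. For a point $a\in U$ and any non-empty closed subtree $E$ of $U$ containing the basepoint $p$, let $\pi_E(a)$ denote the unique point of $E$ closest to $a$. Since $C\subseteq BC$, we have $\overline{E_C}\subseteq \overline{E_{BC}}$, and both are non-empty closed subtrees containing $p$, so $\pi_{\overline{E_C}}(a)$ and $\pi_{\overline{E_{BC}}}(a)$ are well-defined. The inclusion also immediately gives
$$\dist(a,\overline{E_{BC}})\leq \dist(a,\overline{E_C}).$$

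For the $(\Longleftarrow)$ direction, suppose that for every $a\in A$ the two closest points coincide, say equal to $e$. Then $\dist(a,\overline{E_C})=d(a,e)=\dist(a,\overline{E_{BC}})$, which is exactly the condition in Definition \ref{starindependence}.

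For the $(\Longrightarrow)$ direction, fix $a\in A$ and assume $\dist(a,\overline{E_{BC}})=\dist(a,\overline{E_C})$. Let $e_C=\pi_{\overline{E_C}}(a)$ and $e_{BC}=\pi_{\overline{E_{BC}}}(a)$. Since $e_C\in \overline{E_C}\subseteq \overline{E_{BC}}$, the point $e_C$ is an element of $\overline{E_{BC}}$ whose distance from $a$ equals $\dist(a,\overline{E_{BC}})$. By the uniqueness of the closest point in $\overline{E_{BC}}$ (Lemma \ref{!geo} and the subsequent remark), we conclude $e_C=e_{BC}$.

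I do not expect a serious obstacle here; the only thing to be careful about is invoking the correct existence-and-uniqueness fact for closest points in closed subtrees, and making sure $\overline{E_C}$ and $\overline{E_{BC}}$ are both non-empty (which follows from our standing convention that $p$ is always included in any parameter set). The argument is a direct two-line verification in each direction using the subset inclusion $\overline{E_C}\subseteq \overline{E_{BC}}$.
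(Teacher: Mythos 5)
Your proof is correct and takes essentially the same route as the paper: both directions rest on the inclusion $\overline{E_C}\subseteq \overline{E_{BC}}$ together with the existence and uniqueness of closest points in closed subtrees from Lemma \ref{!geo}. The only cosmetic difference is that you invoke uniqueness of the minimizer in $\overline{E_{BC}}$ directly, whereas the paper first notes $e_1\in[a,e_2]$ and then concludes $e_1=e_2$ from the equal distances; these are interchangeable one-line finishes.
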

\begin{proof}
($\Rightarrow$)  Assume $A\ind[*]_C B$.
Take an arbitrary $a\in A$. Let $e_1$ be the unique closest point to $a$ in $\overline{E_{BCp}}$
and $e_2$ the unique closest point to $a$ in $\overline{E_{Cp}}$.
We assumed $\dist(a,\overline{E_{BCp}})=\dist(a,\overline{E_{Cp}})$, which implies $d(a, e_1)=d(a, e_2)$.
Since $e_2\in \overline{E_{Cp}}\subseteq \overline{E_{BCp}}$, we know $e_1\in [a,e_2]$ by Lemma \ref{!geo}.
Therefore, $e_1=e_2$. Since $a$ was arbitrary, we know this holds for all $a\in A$.
\newline
($\Leftarrow$)  Assume for all $a\in A$ the closest point to $a$ in $\overline{E_{BCp}}$
is the closest point to $a$ in $\overline{E_{Cp}}$. Then clearly
$\dist(a, \overline{E_{BCp}})=\dist(a, \overline{E_{Cp}})$ for all $a\in A$.
\end{proof}

\begin{theorem}
\label{starindependencetheorem}
The relation $\ind[*]$ is the model theoretic
independence relation for $\rbRT_r$.  Moreover, types over arbitrary sets
of parameters are stationary.
\end{theorem}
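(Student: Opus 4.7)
The plan is to verify that $\ind[*]$ satisfies the standard axioms characterizing nonforking in a stable continuous theory (invariance, symmetry, monotonicity, transitivity, finite character, extension, local character) together with stationarity over arbitrary parameter sets. Since $\rbRT_r$ is stable by Theorem \ref{rbrtstability}, the continuous Kim--Pillay style uniqueness theorem then forces $\ind[*]$ to coincide with forking independence, and stationarity is what it says.

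The geometric backbone is Lemma \ref{!geo} (each point has a unique closest point in any closed subtree, lying on every geodesic from the point into the subtree) combined with Lemma \ref{determinetypes} (a type $\tp(a/D)$ is determined by the projection of $a$ in $\overline{E_D}$, the distance to that projection, and internal pairwise distances). The condition $a \ind[*]_C B$ simply asserts that the projection data describing $\tp(a/BC)$ already lives over $C$. My first step is a clean reformulation: $A \ind[*]_C B$ iff $\overline{E_{AC}} \cap \overline{E_{BC}} = \overline{E_C}$. For $\Rightarrow$, the projection $\pi$ onto $\overline{E_{BC}}$ takes each $a \in A$ into $\overline{E_C}$, and since it takes geodesics to geodesics, $\pi(\overline{E_{AC}}) \subseteq \overline{E_C}$; any point of the intersection is a fixed point of $\pi$. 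For $\Leftarrow$, a standard tree fact says that the projection of a closed subtree onto a closed subtree lands in their intersection whenever that intersection is nonempty (and it is here, since $p \in C$). This reformulation is manifestly symmetric in $A$ and $B$.

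Stationarity is then direct. Suppose $\tp(A/C) = \tp(A'/C)$, with $A \ind[*]_C B$ and $A' \ind[*]_C B$. For each coordinate $a_i$ the closest point $e_i$ of $a_i$ in $\overline{E_{BC}}$ lies in $\overline{E_C}$, and Lemma \ref{determinetypes}(1) applied over $C$ yields $e_i = e'_i$ together with $d(a_i, e_i) = d(a'_i, e_i)$. For any $x \in BC$, Lemma \ref{!geo} forces $e_i \in [a_i, x]$, so $d(a_i, x) = d(a_i, e_i) + d(e_i, x) = d(a'_i, x)$. Internal distances in $A$ and $A'$ already agree, so Lemma \ref{determinetypes}(2) gives $\tp(A/BC) = \tp(A'/BC)$.

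The remaining axioms are read off from the intersection formulation: invariance, monotonicity, finite character, and transitivity are transparent. Local character holds because a countable tuple produces only countably many projections, captured by a countable subset of the base. Extension is obtained using Theorem \ref{amalg} and Lemma \ref{embed} to realize inside $\U$ a copy of $\tp(A/C)$ attached to $\overline{E_D}$ only along $\overline{E_C}$, for any $D \supseteq C$. The main obstacle I anticipate is the reformulation step, whose rigor requires the tree-geometric fact that projection onto a closed subtree is a retraction mapping geodesics to geodesics; once that is in place the rest of the proof is essentially bookkeeping.
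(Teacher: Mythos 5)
Your proposal is correct and follows essentially the same route as the paper: both verify the axioms of a stable independence relation from \cite[Theorems 14.12 and 14.14]{N}, using Lemma \ref{!geo} and Lemma \ref{determinetypes} for stationarity (your argument there is the paper's almost verbatim), Lemma \ref{embed} for extension, and the closest-point/countability observations for local and finite character. The only local difference is your symmetric reformulation $\overline{E_{AC}}\cap\overline{E_{BC}}=\overline{E_C}$ (valid since $p\in C$ by the paper's standing convention, and provable from Lemmas \ref{!geo} and \ref{branch} as you indicate), which makes symmetry automatic, whereas the paper establishes symmetry directly by showing every geodesic from $\overline{E_A}$ into $\overline{E_{BC}}$ passes through $\overline{E_C}$; both are sound.
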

\begin{proof}
We will show $\ind[*]$ satisfies all the properties of a stable independence relation
on a universal domain of a stable theory as given in 
Ben Yaacov, Berenstein, Henson and Usvyatsov \cite[Theorem 14.12]{N}.
Then by \cite[Theorem 14.14]{N} we know $\ind[*]$ is the
model theoretic independence relation for the stable theory $\rbRT_r$.

\noindent{\bf (1) Invariance under automorphisms}

Any automorphism $\sigma$ of $\U$ satisfies $\sigma(\overline{E_{Ap}})=\overline{E_{\sigma(Ap)}}$
and is distance preserving.

\noindent{\bf (2) Symmetry:} if $A\ind[*]_C B$, then $B\ind[*]_C A$.

Assume $A\ind[*]_C B$.
This means for all $a\in A$ we have that the closest point in $\overline{E_{BCp}}$
to $a$ is $e_a\in \overline{E_{Cp}}$.
Thus, by Lemma \ref{branch}, for any $a\in A$, for any $y\in \overline{E_{BCp}}$ we have
$[a, y]\cap \overline{E_{Cp}}\not=\emptyset$. It follows that
for any $x\in \overline{E_{Ap}}$, for any $y\in \overline{E_{BCp}}$
there exists a point of $\overline{E_{Cp}}$ on $[x,y]$.
Let $b\in B$. Then for any $x\in \overline{E_{Ap}}$ there is a point of $\overline{E_{p}C}$
on $[x,b]$. It follows that the closest point in $\overline{E_{ACp}}$ to any $b\in B$ is in $\overline{E_{Cp}}$.

\noindent{\bf (3) Transitivity:} $A\ind[*]_C BD$ if and only if $A\ind[*]_C B$ and $A\ind[*]_{BC} D$.

We know
$E_{Cp}\subseteq E_{BCp}\subseteq E_{BCDp}$
which implies
$$\dist(a, \overline{E_{Cp}})\geq \dist(a, \overline{E_{BCp}})\geq \dist(a, \overline{E_{BCDp}}) \text{.} $$
Therefore $\dist(a, \overline{E_{BCDp}})=\dist(a, \overline{E_{Cp}})$ if and only if
$$\dist(a, \overline{E_{BCp}})=\dist(a, \overline{E_{Cp}}) \text{ and }
\dist(a, \overline{E_{BCDp}})=\dist(a, \overline{E_{BCp}}) \text{.} $$
Hence
$$A\ind[*]_C BD \text{ if and only if } A\ind[*]_C B \text{ and } A\ind[*]_{BC} D \text{.} $$

\noindent{\bf (4) Finite character:} $A\ind[*]_C B$ if and only if $a\ind[*]_CB$ for all finite tuples $a\in A$.

This is clear from the definition.

\noindent{\bf (5) Extension:} for all $A, B, C$ we can find $A'$ such that
tp$(A/C)=$ tp$(A'/C)$ and $A'\ind[*]_C B$.

By finite character and saturation of $\U$, it suffices to show this statement when $A$ is
a finite tuple.  Let $e\in \overline{E_{Cp}}$ be the unique point closest to $\overline{E_{Ap}}=E_{Ap}$.
Let $\tau<\kappa$ be the cardinality of $\overline{E_{Bp}}$.  Then there
are at most $\tau$-many branches in $\overline{E_{Bp}}$ at any point of $\overline{E_{Bp}}$.
Since $A$ is finite, use Lemma \ref{embed} to embed a copy of $\overline{E_{Ap}}=E_{Ap}$ on branches
at $e$ that do not intersect $\overline{E_{Bp}}$.
The image of $A$ under this embedding gives us $A'$.

\noindent{\bf (6) Local Character:} if $a=a_1,\dots,a_m$ is a finite tuple,
there is a countable $B_0\subseteq B$ such that $a\ind[*]_{B_0}B$.

Let $e_i$ be the closest point of $\overline{E_{p}B}$ to $a_i$ for $i=1,\dots,m$. Let $B_i$
be a countable subset of $B$ such that $e_i$ is an element of $\overline{E_{B_ip}}$.
Let $B_0=\bigcup_i^m B_i$.

\noindent{\bf  (7) Stationarity (over arbitrary sets of parameters):} 
if $\tp(A/C)=\tp(A'/C)$, $A\ind[*]_C B$, and $A'\ind[*]_C B$,
then
$\tp(A/BC)=\tp(A'/BC)$,
where $C$ is a small submodel of $\U$.

By quantifier elimination, $\tp(A/BC)$ is determined by $\{\tp(a/BC)\mid a\in A\}$ plus
the information $\{d(a_1, a_2)\mid a_1, a_2\in A\}$. These distances
$\{d(a_1, a_2)\mid a_1, a_2\in A\}$ are fixed by $\tp(A/C)$.
Thus, it suffices to show the conclusion in the case when $A=\{a\}$ and $A'=\{a'\}$.
If $a$ or $a'$ is in $Cp$ the conclusion is obvious, so assume $a, a' \notin Cp$.
The type of $a$ (or $a'$) over $BC$ is determined by two parameters,
the unique point in $\overline{E_{BCp}}$ that is closest to $a$, and the distance from $a$
to that point.  Since $a\ind[*]_C B$, it follows that the closest point in $\overline{E_{Cp}}$
to $a$ is the same as the closest point in $\overline{E_{BCp}}$ to $a$,
and the same is true for $a'$.
Since tp$(a/C)=$tp$(a'/C)$, we know $a$ and $a'$ have the same
closest point $e$ in $\overline{E_{Cp}}$ and $d(a, e)=d(a',e)$.  Since $e$ is also the closest point
in $\overline{E_{BCp}}$ to $a$ and $a'$, we know that tp$(a/BC)=$tp$(a'/BC)$
by Lemma \ref{determinetypes}.
\end{proof}

\noindent \textbf{Canonical Bases} \medskip

A canonical base of a stationary type is a minimal set of parameters over which that type is
definable.  However, to avoid a discussion of definable types, we here use an
equivalent definition of canonical base, as given in 
Ben Yaacov, Berenstein, and Henson \cite{IAW}.  
As in that paper, we
here take advantage of the fact (Lemma \ref{L: dcl and acl characterized} and part (7)
of the proof of Theorem \ref{starindependencetheorem}) that every type over an
arbitrary set of parameters is stationary.

For stable theories in general, canonical bases exist as sets of imaginary elements, 
however, in models of $\rbRT_r$, they are sets of ordinary elements.  That is,
the theory has built-in canonical bases.  Indeed, in this setting they are very simple.

For sets $A\subseteq B\subseteq \U$,
and $q\in S_n(A)$ we say $q'\in S_n(B)$ is a \emph{non-forking extension} of $q$ if
$b\models q'$
implies $b\models q$ and $b\ind_A B$.
By the definition of independence, the condition $b\ind_A B$ implies that the points
$e_1,\dots,e_n$ in $\overline{E_{Ap}}$ closest to $b_1,\dots,b_n$ respectively must
also be the closest points to $b_1,\dots,b_n$ in $\overline{E_{Bp}}$.
Because $\rbRT_r$ is stable and all types are stationary, non-forking extensions are unique.
Denote the unique non-forking extension of $q$ to the set $B$ by ${q}{\upharpoonright}^B$. 
Given a type $q$ over a set $A\subseteq \U$ and an automorphism $f$ of $\U$,
$f(q)$ denotes the set of $L_r$-conditions over $f(A)$ corresponding to the
conditions in $q$, where each parameter $a\in A$ is replaced by its image $f(a)$.

The following is Definition 6.1 from Ben Yaacov, Berenstein, and Henson \cite{IAW}.

\begin{definition}
A \emph{canonical base} $Cb(q/A)$ for a type $q\in S_n(A)$ is a subset $C\subseteq \U$ such that
for every automorphism $f\in \Aut(\U)$, we have:
${q}{\upharpoonright}^\U= {f(q)}{\upharpoonright}^\U$ if and only if $f$ fixes each member of $C$.
\end{definition}

The following result describes canonical bases in $\rbRT_r$. 

\begin{theorem}
Let $b=(b_1,\dots,b_n)\in \U^n$ and $A\subseteq \U$ a set of parameters. Let $q \in S_n(A)$
be the type over $A$ of the tuple $b$. Then a canonical base of $q$ is given by the set
$\{e_i \mid 1\leq i\leq n\}$, where $e_i\in \overline{E_{Ap}}$ is the closest point to $b_i$ in $\overline{E_{Ap}}$.
Note that this set depends only on $q$.
\end{theorem}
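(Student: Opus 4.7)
The plan is to verify the defining property of a canonical base directly for $C = \{e_1, \ldots, e_n\}$, using the characterization of types from Lemma \ref{determinetypes} together with the description of $\ind[*]$ from Definition \ref{starindependence}. Observe first that $C$ depends only on $q$ and not on the specific tuple $b$: each $e_i$ is the unique closest point in $\overline{E_A}$ to any realization of the $i$-th coordinate of $q$, and that closest point is determined by the 1-type $\tp(b_i/A) \subseteq q$ via Lemma \ref{determinetypes}(1).

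The first step is to describe the non-forking extension $q \upharpoonright^U$ explicitly in terms of this data. By Lemma \ref{determinetypes}(2) combined with Definition \ref{starindependence}, a tuple $c = (c_1, \ldots, c_n)$ in an elementary extension of $U$ realizes $q \upharpoonright^U$ if and only if, for each $i$, the unique closest point in $\overline{E_U} = U$ to $c_i$ is $e_i$, with $d(c_i, e_i) = d(b_i, e_i)$, and $d(c_i, c_j) = d(b_i, b_j)$ for all $i, j$. For any $f \in \Aut(U)$, the image $f(q) \in S_n(f(A))$ admits the analogous description with $f(e_i)$ in place of $e_i$ (the distances are automorphism-invariant), so $f(q) \upharpoonright^U$ is characterized by the data $\{f(e_1), \ldots, f(e_n)\}$ together with the same collections of distances.

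With these descriptions in hand, both directions of the canonical base condition are essentially bookkeeping. For $(\Leftarrow)$, if $f$ fixes each $e_i$ then $f(e_i) = e_i$ for all $i$, so the defining data of $q \upharpoonright^U$ and $f(q) \upharpoonright^U$ agree and the two types are equal. For $(\Rightarrow)$, suppose $q \upharpoonright^U = f(q) \upharpoonright^U$ and fix a realization $c$. The closest point in $U$ to $c_i$ is unique by Lemma \ref{!geo}, yet the two descriptions identify it as $e_i$ and as $f(e_i)$ respectively; hence $f(e_i) = e_i$.

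There is no substantial technical obstacle once the two descriptions of the non-forking extension are lined up. The only mild subtlety is the degenerate case $d(b_i, e_i) = 0$, where $b_i = e_i \in \overline{E_A} \subseteq U$ and $c_i = e_i$ is forced. Here the same uniqueness statement (trivially, the closest point to a point of $U$ is itself) gives $f(e_i) = e_i$, so the argument handles all cases uniformly.
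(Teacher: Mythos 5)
Your proposal is correct and follows essentially the same route as the paper's proof: both rest on the characterization of types via unique closest points in the spanned subtree (Lemma \ref{determinetypes}), the description of $\ind[*]$, and the uniqueness of closest points from Lemma \ref{!geo}. Your packaging---first writing down an explicit invariant description of $q\upharpoonright^U$ and $f(q)\upharpoonright^U$ and then comparing them---is a slightly cleaner organization of the same distance computations the paper carries out directly, and your handling of the degenerate case $b_i=e_i$ is fine.
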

\begin{proof}
Let $b$,  $A\subseteq \U$ and $q \in S_n(A)$
be as described in the statement of the theorem. Let $C=\{e_i \mid 1\leq i\leq n\}$
where $e_i\in \overline{E_{Ap}}$ is the closest point to $b_i$ in $\overline{E_{Ap}}$.
First, assume $f$ is an automorphism of $\U$ fixing $C$ pointwise.
Let $c=(c_1,\dots,c_n)$ be a realization of ${q}{\upharpoonright}^\U$ (in some extension of $\U$).
Then $c\models q$ and $c\ind_{A} \U$. To show ${f(q)}{\upharpoonright}^\U$=${q}{\upharpoonright}^\U$
it suffices to show that $c\models f(q)$ and $c\ind_{f(A)} \U$, because
then ${q}{\upharpoonright}^\U$ is the unique non-forking extension of $f(q)$ to $\U$.
By Lemma \ref{determinetypes}, an $n$-type over a set $A$ is determined by the values it assigns to the formulas
$d(x_i, x_j)$ and $d(x_i, a)$ for $a\in Ap$.  Note that in $f(q)$, the parameter-free
$L_r$-conditions are the same as in the type $q$. So,
for example, $d(x_i, x_j)$ must have the same value in $f(q)$ as in $q$.  
Thus, to show that $c\models f(q)$ we just need to show
that $d(c_i,a)=d(c_i, f(a))$ for all $a\in Ap$ and $i\in \{1,\dots,n\}$.

We know $c\models q$. Thus Lemma \ref{determinetypes} implies that $e_i$ must be the closest
point to $c_i$ in $\overline{E_{Ap}}$.
Also, $c\ind_A \U$ implies that $e_i$ is also the closest point in $\U$ to $c_i$.
Since $f(\overline{E_{Ap}})\subseteq \U$ we know the closest point in $f(\overline{E_{Ap}})$ to $c_i$ is $e_i=f(e_i)$.
Therefore by Lemmas \ref{branch} and \ref{!geo},
we know $d(c_i, a)=d(c_i, e_i)+d(e_i, a)$ and $d(c_i, f(a))=d(c_i, e_i)+d(e_i, f(a))$ for any $a\in Ap$. Thus, 
\begin{align*}
d(c_i, a)&=d(c_i, e_i)+d(e_i, a)\\
&= d(c_i, e_i)+d(f(e_i), f(a))\\
&=d(c_i, e_i)+d(e_i, f(a))=d(c_i, f(a))
\end{align*}
establishing that $c\models f(q)$.

Since $f$ is an isometry, clearly $f(e_i)=e_i$ is the closest point to $c_i$ in $f(\overline{E_{Ap}})$.
The closed subtree $f(\overline{E_{Ap}})$ is equal to the closed subtree $\overline{E_{f(A)p}}$
since $f([a,b])=[f(a), f(b)]$ for all $a, b\in \U$ and $f(p)=p$.
This implies that $c\ind_{f(A)} \U$.   We conclude that ${f(q)}{\upharpoonright}^\U$=${q}{\upharpoonright}^\U$.

For the other direction, assume $f$ is an automorphism of $\U$
that does not fix all of the elements of $C$. Without loss of generality,
assume $f(e_1)\not=e_1$. Let $(c_1,\dots,c_n)\models {q}{\upharpoonright}^\U$.
Then the closest point in $\U$ to $c_1$ is $e_1$, which is also the closest point to $c_1$ in $\overline{E_{Ap}}$.
Then, since $f(e_1)\in \U$, the point $e_1$ must be
on the geodesic segment joining $f(e_1)$ and $c_1$, so $d(f(e_1), c_1)=d(f(e_1), e_1)+d(e_1,c_1)$.
Thus, $d(f(e_1), e_1)=d(f(e_1), c_1)-d(e_1, c_1)$.
Since $d(f(e_1), e_1)\not=0$, then $d(f(e_1), c_1)\not=d(e_1, c_1)$.
But, $d(e_1, c_1)$ is the value of the formula $d(e_1, x_1)$ in ${q}{\upharpoonright}^\U$,
and by the definition of $f(q)$, the value of $d(f(e_1), x_1)$ in $f(q)$ must equal the value of
$d(e_1, x_1)$ in $q$. So, the $L$-condition $|d(f(e_1), x_1)-d(e_1, c_1)|=0$ is in the type $f(q)$,
and therefore in the type ${f(q)}{\upharpoonright}^\U$.
Thus, the tuple $c=(c_1,\dots,c_n)$ cannot be a realization of ${f(q)}{\upharpoonright}^\U$, and
therefore ${q}{\upharpoonright}^\U\not= {f(q)}{\upharpoonright}^\U$.
\end{proof}

% bddRtrees23-04-101112ExamplesCategoricityUnbounded.tex
% asymptotic cones, examples, categoricity and principal types, unbounded trees

% last edited 1-23-2021 by cwh

% this is a final version for arxiv.org, implementing most of the revisions that were
% made in the JLA publication process.

% !TEX root = ./bddRtrees23-master.tex

\section{Models of \texorpdfstring{$\rbRT_r$}{rbRTr}: Examples}

In this section we discuss examples of models of $\rbRT_r$ from the literature.  Our first examples
come from the explicitly described universal $\R$-trees that are treated in 
Dyubina and Polterovich \cite{DP2}.  
We show that they give exactly the (fully) saturated models of $\rbRT_r$.  Our second examples come from
asymptotic cones of hyperbolic finitely generated groups.  They give exactly the unique saturated
model of $\rbRT_r$ of density $2^\omega$.

We begin with a lemma about the density of a $\kappa$-saturated model.

\begin{lemma}
\label{kappabranch}
\label{saturateddensity}
Let $\M\models \rbRT_r$ with underlying $\R$-tree $(M,d,p)$, and let $\kappa$
be an infinite cardinal.
\newline
(1) If there exists $a\in M$ with degree $\kappa$, then
the density character of $M$ is at least $\kappa$.
\newline
(2) If  $\M$ is $\kappa$-saturated, then the density character of $\M$ 
is at least $\kappa^\omega$.
\end{lemma}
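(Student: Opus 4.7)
For (1): since $\kappa$ is infinite and a point at distance $r$ from $p$ has degree $1$ in $M$, we must have $d(p,a)<r$; set $\delta = r-d(p,a)>0$. Of the $\kappa$ branches at $a$, at most one contains $p$, so $\kappa$ of them, say $\{\beta_i : i<\kappa\}$, miss $p$. Lemma~\ref{branches of sufficient height} gives a point in each $\beta_i$ at distance $r$ from $p$, which by Lemma~\ref{branch} lies at distance $\delta$ from $a$; travelling halfway along the geodesic from $a$ yields $b_i\in\beta_i$ with $d(a,b_i)=\delta/2$. For $i\ne j$, Lemma~\ref{branch}(3) puts $a$ on $[b_i,b_j]$, so $d(b_i,b_j)=\delta$. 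Any dense subset of $M$ must intersect each open ball $B(b_i,\delta/3)$, and these balls are pairwise disjoint, so the density character of $M$ is at least $\kappa$.

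For (2): Theorem~\ref{kappasaturated=kappabranching} guarantees at least $\kappa$ branches at every point $x\in M$ with $d(p,x)<r$. The plan is to adapt the tree construction from the second half of the proof of Theorem~\ref{rbrtstability}. Recursively along $\sigma\in\kappa^{<\omega}$, I build points $a_\sigma\in M$ with $a_{\langle\rangle}=p$ so that, given $a_\sigma$, saturation together with Lemma~\ref{branches of sufficient height} allows me to choose points $a_{\sigma\frown i}$ for $i<\kappa$ on $\kappa$ distinct branches at $a_\sigma$ (none being the branch containing the parent $a_{\sigma^-}$), each at distance $r/2^{|\sigma|+2}$ from $a_\sigma$. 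Since $\sum_{k\ge 1}r/2^{k+1}=r/2$, every $a_\sigma$ is at distance at most $r/2<r$ from $p$, so saturation applies throughout the construction. For each $f\in\kappa^\omega$ the sequence $(a_{f\restriction n})_n$ is Cauchy (its step lengths form a geometric series), so by completeness it converges to some $a_f\in M$ with $d(p,a_f)=r/2$. The $a_f$'s are pairwise distinct: if $f$ and $g$ first differ at index $m$, then $a_{f\restriction m}\in[a_{f\restriction n},a_{g\restriction n}]$ for all $n\ge m$, and continuity of the metric passes this to $a_{f\restriction m}\in[a_f,a_g]$.

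I then separate the limits using auxiliary points. The geometric observation is that, at each $a_f$, every constructed $a_\sigma$ and every other limit $a_g$ lies on the one branch at $a_f$ containing $p$: if $\sigma$ is a prefix of $f$ then $a_\sigma\in[p,a_f]$, and otherwise $\sigma$ and $f$ diverge at some level $m$ so that $a_{f\restriction m}\in[a_\sigma,a_f]$, while for a limit $a_g$ the argument above gives $a_{f\restriction m}\in[a_f,a_g]$. Since $a_f$ has at least $\kappa\ge 2$ branches, pick a branch $\gamma_f$ at $a_f$ not containing $p$ and use Lemma~\ref{branches of sufficient height} to place $b_f\in\gamma_f$ with $d(a_f,b_f)=r/4$. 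For $f\ne g$, $a_g$ sits on the $p$-branch at $a_f$ while $b_f\in\gamma_f$, so $a_f\in[b_f,a_g]$; symmetrically $a_g\in[a_f,b_g]$; concatenating gives $d(b_f,b_g)=r/4+d(a_f,a_g)+r/4\ge r/2$. Thus $\{b_f:f\in\kappa^\omega\}$ consists of $\kappa^\omega$ points with pairwise distances at least $r/2$, so the pairwise disjoint balls $B(b_f,r/4)$ force the density character of $M$ to be at least $\kappa^\omega$.

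The main obstacle is the bookkeeping at the limit stage: I must verify that at each $a_f$, every other constructed or limit point sits on the single branch at $a_f$ containing $p$, which is what makes any other branch automatically fresh and available for $b_f$. Once this is in hand, the rest of (2) is a direct elaboration of the construction already performed in the proof of Theorem~\ref{rbrtstability}.
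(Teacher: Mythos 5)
Your proof is correct and follows essentially the same route as the paper: part (1) uses Lemma \ref{branches of sufficient height} to place uniformly separated points on $\kappa$ distinct branches at $a$, and part (2) reruns the $\kappa^{<\omega}$ tree construction from the proof of Theorem \ref{rbrtstability} (via Theorem \ref{kappasaturated=kappabranching}) and separates the $\kappa^\omega$ limit points by auxiliary points $b_f$ at distance $r/4$ on fresh branches, exactly as the paper does. The only difference is that you spell out the limit-stage bookkeeping that the paper leaves implicit by citing that earlier construction.
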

\begin{proof}
(1) If $d(p,a)=r$, then there is a single branch at $a$, namely the branch
containing $p$. So, we must have $d(p, a)<r$.
Using Lemma \ref{branches of sufficient height} and taking points on different branches at $a$
each with distance $r$ from $p$, we find a collection of
$\kappa$-many points such that the distance between any two of them is $2(r-d(p,a))$.
Thus, the density character of $M$ must be at least $\kappa$.

(2) By Theorem \ref{kappasaturated=kappabranching}, at each point in the 
underlying $\R$-tree of $M$ of $\M$ there are at least $\kappa$-many branches.
The tree construction from Theorem \ref{rbrtstability} then yields at least
$\kappa^\omega$-many distinct points with pairwise distances at least $r$.
\end{proof}

\noindent \textbf{Universal $\R$-trees} \medskip

Next, we review the description of the (isometrically) universal $\R$-trees from 
Dyubina and Polterovich \cite{DP2} and relate them to saturated models of $\rbRT_r$.

\begin{definition}
Let $\mu$ be a cardinal. An $\R$-tree $M$ is called {\it $\mu$-universal}
if, for any $\R$-tree $N$ with $\leq \mu$ branches at every point, there
is an isometric embedding of $N$ into $M$.
\end{definition}

\begin{example}[{\cite[Example 1.1.1 and Lemma 2.1.1]{DP2}}]
\label{universalrtrees}
For each $\mu \geq 2$ let $C_{\mu}$ be
a set with cardinality $\mu$ if $\mu$ is infinite, and cardinality
$\mu-1$ if $\mu$ is finite. Let $A_\mu$ be the set of functions $f\colon (-\infty, \rho_f)\to C_{\mu}$
from an arbitrary left-infinite open interval to $C_{\mu}$, where $f$ satisfies:
\begin{enumerate}
\item There exists $\tau_f\leq \rho_f$ so that $f=0$ for all $t\in (-\infty, \tau_f)$, and
\item The function $f$ is piecewise constant from the right. That is, for
any $t\in (-\infty, \rho_f)$ there exists $\delta>0$ so that $f$ is constant on $[t, t+\delta]$.
\end{enumerate}
On $A_\mu$ define the metric $d$ for distinct $f,g$ by
$d(f, g)=(\rho_f-s)+(\rho_g-s)$ where $s=\sup\{t\mid f(t')=g(t') \, \forall t'<t\}$.

Dyubina and Polterovich \cite{DP2} show that $A_\mu$ is a complete $\R$-tree with $\mu$-many 
branches of infinite height at every point. They also show that $A_\mu$ is the unique 
(up to isometry) $\R$-tree with $\mu$ branches at each point and that it is homogeneous and $\mu$-universal.
In our terminology, for $\mu\geq 3$ the space $A_\mu$ is a complete, unbounded richly branching $\R$-tree.

Let $\M_\mu$ be the pointed $\R$-tree obtained by choosing an arbitrary point $p \in A_\mu$ to be the base point and taking 
$\M_\mu$ to be the closed ball in $A_\mu$ of radius $r$ centered at $p$.
We see that $\M_\mu$ is a model of $\rbRT_r$.  When $\mu$
is infinite, $\M_\mu$ is $\mu$-saturated by Theorem \ref{kappasaturated=kappabranching}
and $\M_\mu$ has density at least $\mu^\omega$ by Lemma \ref{saturateddensity}(2).

Classical model theory suggests that since $\rbRT_r$ is complete and $\kappa$-stable
exactly when $\kappa=\kappa^\omega$, there should be a $\kappa$-saturated model
of $\rbRT_r$ with density $\kappa$ exactly when $\kappa=\kappa^\omega$.  Lemma \ref{saturateddensity}(2)
shows that if $\rbRT_r$ has a $\kappa$-saturated model of density $\kappa$, then we must have $\kappa = \kappa^\omega$.
We verify here that when $\kappa = \kappa^\omega$ and we apply the construction described in the preceding
paragraph to $\mu = \kappa$, we do get the unique $\kappa$-saturated model $\M_\kappa$ of $\rbRT_r$ of density $\kappa$.

To complete the verification, we need to show that when $\mu$ is infinite, the density of $\M_\mu$ is at most $\mu^\omega$.
In fact, an examination of the construction shows that the cardinality of $A_\mu$ is
at most $\mu^\omega$. Conditions (1) and (2) in Example \ref{universalrtrees}
imply that each $f\in A_{\mu}$ can only change values at a countable or finite number of points.
Indeed, given $f \in A_\mu$, we can inductively build a sequence $\{t_n\}_{n\in \N}$
recording where $f$ changes value as follows. For $f\in A_{\mu}$, let $t_0=\tau_f$.

Given $t_n$, let $t_{n+1}=\sup\{t\in (-\infty, \rho_f)\mid f(t)=f(t_n)\}$
if this supremum is $<\rho_f$. If $\sup\{t\in (-\infty, \rho_f)\mid f(t)=f(t_n)\}=\rho_f$,
then set $t_{n+i}=\rho_f$ for all $i\in \N$.
\begin{itemize}
\item if $t_n=t_{n+1}$, then $t_{n+i}=t_n=\rho_f$ for all $i\in \N$;
\item $f$ is constant on $[t_n, t_{n+1})$ for each $n\in \N$.
\end{itemize}
For each $n\in \N$, let $\alpha_n\in C_{\mu}$
be the value of $f$ on $[t_n, t_{n+1})$.
This gives us a sequence $\{\alpha_n\}$ of elements of $C_{\mu}$.
The sequences $\{t_n\}$ and  $\{\alpha_n\}$ determine $f$.
Thus, the cardinality of $A_{\mu}$ is at most $2^\omega \cdot \mu^\omega=\mu^\omega$.

It follows that the metric density of $\M_{\mu}$ is exactly $\mu^\omega$.
Thus, in the case that $\mu=\mu^\omega$, the model $\M_{\mu}$ is a
$\mu$-saturated model of $\rbRT_r$ of density $\mu$, and it is the unique model
with those properties.

In the case that $\mu=\mu^\omega$, we can get an alternative
model-theoretic argument that $A_{\mu}$ is the unique complete $\R$-tree
with $\mu$ branches at every point. 
Given any two complete $\R$-trees $M_1$ and $M_2$ with $\mu$-many branches every point,
select a base point in each.
Then for each $r>0$ the closed $r$-balls in $M_1$ and $M_2$ (centered at their 
respective base points) are saturated models of $\rbRT_r$.   Hence those $r$-balls are isomorphic
by the fact that saturated models of a complete theory with the same density are isomorphic.
A back-and-forth argument can be used to build an isomorphism from $M$ to $N$, where
each time we extend the partial isomorphism to a new point we take its distance from the base point
into account and work in closed $r$-balls for large enough $r$.
\end{example}

\noindent \textbf{Asymptotic Cones} \medskip

A finitely generated group is $\emph{hyperbolic}$ if its Cayley graph is a $\delta$-hyperbolic
metric space for some $\delta>0$. A \emph{non-elementary} hyperbolic group is one 
that has no cyclic subgroup of finite index.

\begin{definition}
Let $(M, d, p)$ be a metric space. Let $U$ be a non-principal ultrafilter on $\N$ and let
$(\nu_m)_{m\in \N}$ be a sequence of positive integers
such that $\lim_{m\to \infty} \nu_m=\infty$.
The asymptotic cone of $(M, d, p)$ with respect to $(\nu_m)_{m\in \N}$ and $U$ is
the ultraproduct of pointed metric spaces $\prod_U (M, {d}/{\nu_m}, p)$. 
Denote this asymptotic cone by $\mathrm{Con}_{U, (\nu_m)}(M, d,p)$.  Elements of
$\mathrm{Con}_{U, (\nu_m)}(M, d,p)$ are denoted $[a_n]$ where $a_n\in M$ for each $n$.
\end{definition}

There are versions of this definition that allow, for example, a different
choice of base point in each factor. Keeping the same base point is sufficient
for our discussion.

\begin{example}
An asymptotic cone $\mathrm{Con}_{U, (\nu_m)}(G)$ of a finitely generated
group is defined to be the asymptotic cone of its Cayley graph with base 
point $e$ and some designated word metric on $G$.
It is a fact that any asymptotic cone of a hyperbolic group is an $\R$-tree and 
is homogeneous (see 
van den Dries and Wilkie \cite{DW} or 
Drutu \cite{D}).
In fact, in the case of a non-elementary hyperbolic group, all asymptotic cones 
are homogeneous with $2^\omega$ branches at every point (see \cite[Proposition 3.A.7]{D}) 
and are thus are isometric to $A_{2^\omega}$ from Example \ref{universalrtrees}.   
A proof of this fact is given below (Lemma \ref{L: hyperbolic group gives R-tree}).
\end{example}

\begin{fact}
\label{switch gen sets}
Say $B$ and $C$ are both finite generating sets for the hyperbolic 
group $G$ and $U$ is a non-principal ultrafilter on $\N$.
The word metrics $d_B$ and $d_C$ are Lipschitz equivalent (and the corresponding
Cayley graphs are quasi-isometric). It follows that the asymptotic cones 
$\mathrm{Con}_{U, (\nu_m)}(G, d_B, e)$ and $\mathrm{Con}_{U, (\nu_m)}(G, d_C, e)$ are homeomorphic. 
\end{fact}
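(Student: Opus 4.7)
The plan is to first establish the Lipschitz equivalence of $d_B$ and $d_C$ by a routine finite generating set argument, then lift the resulting bi-Lipschitz identity map on $G$ to the ultraproducts to obtain a bi-Lipschitz (hence homeomorphic) identification of the two asymptotic cones.

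First I would verify Lipschitz equivalence. Since $B$ and $C$ are generating sets for $G$, each $b \in B$ can be written as a word $w_b$ in $C \cup C^{-1}$, and each $c \in C$ as a word $w_c$ in $B \cup B^{-1}$. Assuming $B$ and $C$ are finite (the standard setting for a finitely generated hyperbolic group), set
$$K_1 = \max_{b \in B} \, \ell_C(w_b), \qquad K_2 = \max_{c \in C} \, \ell_B(w_c),$$
where $\ell_B, \ell_C$ denote word length. Then for any $g,h \in G$, writing $g^{-1}h$ as a $d_B$-geodesic word and substituting each letter by its $C$-expansion gives $d_C(g,h) \le K_1 \, d_B(g,h)$, and similarly $d_B(g,h) \le K_2 \, d_C(g,h)$. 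Thus the identity map $\mathrm{id}\colon (G, d_B) \to (G, d_C)$ is a bi-Lipschitz bijection; equivalently, the two Cayley graphs are quasi-isometric via the identity on vertices with multiplicative constants $(K_1, K_2)$ and additive constant $0$.

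Next I would push this down to the asymptotic cones. For each $m$, the identity map is still bi-Lipschitz from $(G, d_B/\nu_m)$ to $(G, d_C/\nu_m)$ with the same constants $K_1, K_2$, and it sends the basepoint $e$ to $e$. Define
$$F\colon \mathrm{Con}_{U,(\nu_m)}(G, d_B, e) \longrightarrow \mathrm{Con}_{U,(\nu_m)}(G, d_C, e), \qquad F([a_m]) = [a_m].$$
This is well defined because if $\lim_U d_B(a_m,b_m)/\nu_m = 0$ then $\lim_U d_C(a_m,b_m)/\nu_m \le K_1 \lim_U d_B(a_m,b_m)/\nu_m = 0$. The same computation applied to distances (not just to the zero-distance condition) shows
$$K_2^{-1}\, d_B\bigl([a_m],[b_m]\bigr) \le d_C\bigl(F([a_m]),F([b_m])\bigr) \le K_1 \, d_B\bigl([a_m],[b_m]\bigr),$$
so $F$ is bi-Lipschitz. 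It is surjective since any class $[a_m]$ in the target is hit by the same sequence viewed in the source, and injective by the lower bound. A bi-Lipschitz bijection is a homeomorphism, completing the proof.

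There is no real obstacle here; the proof is essentially the standard observation that quasi-isometric spaces have bi-Lipschitz asymptotic cones, specialized to the quasi-isometry being the identity (so that the additive defect disappears in the limit and one obtains a genuine bi-Lipschitz map rather than only a homeomorphism).
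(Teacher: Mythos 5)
Your proof is correct. The paper states this as a Fact with no proof at all, and your argument --- bi-Lipschitz equivalence of the two word metrics for finite generating sets, descending via the identity on representative sequences to a bi-Lipschitz bijection of the ultralimits --- is exactly the standard justification the authors are implicitly invoking (your explicit flagging of the finiteness of $B$ and $C$ is appropriate, since Lipschitz equivalence genuinely fails for infinite generating sets, and finiteness is consistent with how the Fact is later used to enlarge a generating set by two free generators).
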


\begin{lemma}
\label{L: hyperbolic group gives R-tree}
Let $G$ be a non-elementary hyperbolic group. Let $U$ be a non-principal ultrafilter on $\N$ and let
$\{\nu_m\}_{m\in \N}$ be a sequence of positive integers such that $\lim_{m\to \infty} \nu_m=\infty$.
Then for any finite generating set $C$ for $G$, the asymptotic cone 
$\mathrm{Con}_{U, (\nu_m)}(G, d_C, e)$  is a homogeneous richly branching $\R$-tree with 
$2^\omega$-many branches at every point.
\end{lemma}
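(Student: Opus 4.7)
The plan is to verify, in order, that $X := \mathrm{Con}_{U,(\nu_m)}(G, d_C, e)$ is (i) an $\R$-tree, (ii) homogeneous, and (iii) has $2^\omega$ branches at every point. Since each such branch will be of infinite height, the richly branching condition of Remark~\ref{general richly branching} follows automatically from (iii). The first two properties are essentially standard, while (iii) is the main content.

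For (i), by Fact~\ref{switch gen sets} we may replace $C$ by a finite generating set without changing the homeomorphism type of $X$. The Cayley graph of $(G,C)$ is then $\delta$-hyperbolic for some $\delta>0$ and is a geodesic metric space after passing to its geometric realization. The ultralimit of the pointed metric spaces $(G, d_C/\nu_m, e)$ is then complete and geodesic, and inherits $0$-hyperbolicity from the Cayley graph since the rescaled hyperbolicity constants $\delta/\nu_m$ tend to $0$; a connected $0$-hyperbolic metric space is an $\R$-tree (as used in the proof of Lemma~\ref{modelsofT}), so $X$ is an $\R$-tree. For (ii), left multiplication by any $g\in G$ is an isometry of $(G, d_C)$, so given $[a_n], [b_n]\in X$ the sequence $(b_n a_n^{-1})_n$ acts coordinatewise and descends to an isometry of $X$ carrying $[a_n]$ to $[b_n]$. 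By (ii) it suffices, for (iii), to exhibit $2^\omega$ branches at the basepoint $o := [(e)_n]$.

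The upper bound $\leq 2^\omega$ is easy: $|G| = \aleph_0$, and the ultraproduct of countable sets by a non-principal ultrafilter on $\N$ has cardinality at most $2^\omega$, so $|X|\leq 2^\omega$. The main obstacle is the lower bound. I would use the classical fact that every non-elementary hyperbolic group contains two elements $g_1, g_2$ which, after replacement by large enough powers, generate a free subgroup $F\cong F_2$ that is quasi-convex, hence quasi-isometrically embedded, in $G$. Set $L := \max\{d_C(e, g_1), d_C(e, g_2)\}$ and $n_m := \lfloor \nu_m / L\rfloor$. For each $\sigma\in 2^\omega$ let $w_\sigma(n) := g_{\sigma(1)+1}\,g_{\sigma(2)+1}\cdots g_{\sigma(n)+1}\in G$ and put $a^\sigma := [w_\sigma(n_m)]_m \in X$; by choice of $L$, $d_X(o, a^\sigma)$ lies in a nonzero bounded interval.

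The key claim is that for distinct $\sigma\neq\sigma'$ in $2^\omega$, the points $a^\sigma$ and $a^{\sigma'}$ lie on distinct branches at $o$. Let $k$ be the first index of disagreement and set $v := g_{\sigma(1)+1}\cdots g_{\sigma(k-1)+1}\in G$; since $v$ is a fixed element of $G$, $[(v)_m] = o$ in the cone. In $F$, the element $v$ is the exact branch point of $e$, $w_\sigma(n_m)$, and $w_{\sigma'}(n_m)$; combining quasi-convexity of $F$ with $\delta$-hyperbolicity of $G$ gives
\[
d_C(w_\sigma(n_m), w_{\sigma'}(n_m)) \;=\; d_C(w_\sigma(n_m), v) + d_C(v, w_{\sigma'}(n_m)) - O(1),
\]
and upon dividing by $\nu_m$ the $O(1)$ vanishes in the ultralimit. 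Hence $d_X(a^\sigma, a^{\sigma'}) = d_X(a^\sigma, o) + d_X(o, a^{\sigma'})$, so by Lemma~\ref{branch}(2), $o$ lies on the geodesic from $a^\sigma$ to $a^{\sigma'}$, and the two points are in distinct branches at $o$. Replacing $n_m$ by $\lfloor t\,n_m\rfloor$ for $t\to\infty$ exhibits points on the same branch as $a^\sigma$ at arbitrary distance from $o$, so each branch has infinite height. This produces $2^\omega$ branches of infinite height at $o$, and by homogeneity at every point of $X$.
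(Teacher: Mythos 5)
Your proposal is correct and follows the same overall strategy as the paper: both arguments get the upper bound $2^\omega$ from the countability of $G$, quote the standard facts that the cone is a complete homogeneous $\R$-tree, and produce $2^\omega$ branches at the basepoint by exhibiting a family of points of the cone, coming from a rank-two free subgroup $F\leq G$, whose pairwise Gromov products at $e$ are $o(\nu_m)$, so that the basepoint lies on the geodesic joining any two of them and they fall into distinct branches. The implementations differ in two respects. First, the paper parametrizes its family by an ultraproduct $\prod_m C_m/U$ of finite subsets $C_m\subseteq F$ with $|C_m|\to\infty$ (points of $F$ at distance $\geq \nu_m$ from $e$ that branch off at radius about $\sqrt{\nu_m}$, so the Gromov products are $\leq\sqrt{\nu_m}$ by direct tree geometry), whereas you index the family by infinite binary words and truncate at length about $\nu_m$; both yield $2^\omega$ classes. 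Second, and more substantively, you transfer the distance estimates from the word metric of $F$ to $d_C$ by invoking quasi-convexity of $F$ and stability of quasi-geodesics to get $d_C(w_\sigma,w_{\sigma'})=d_C(w_\sigma,v)+d_C(v,w_{\sigma'})-O(1)$, while the paper handles this passage more informally by arranging the generators of $F$ to lie in $C$. Making the quasi-isometric embedding explicit is the right move: it is exactly what is needed both for the Gromov-product estimate in $(G,d_C)$ and for the positive lower bound on $d_X(o,a^\sigma)$ (note that your ``choice of $L$'' only supplies the upper bound $d_X(o,a^\sigma)\leq 1$; the lower bound comes from the quasi-isometric embedding, so add a line to that effect). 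Your treatment of infinite height via the truncations at $\lfloor t\,n_m\rfloor$ is likewise a bit more explicit than the paper's, which leaves that point to homogeneity and unboundedness.
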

\begin{proof}
Since $G$ is finitely generated, we know that $G$ is countable.
Therefore, any asymptotic cone of $G$ has cardinality (and therefore density)
at most $2^\omega$. By Lemma \ref{kappabranch} we conclude that any point in the cone
can have at most $2^\omega$ branches.
We also know $\mathrm{Con}_{U, (\nu_m)}(G, d_C, e)$ is a homogeneous $\R$-tree.
Since it is non-elementary $G$ contains a free subgroup $F$ with 2 generators (See 
Bridson and Haefliger \cite{BH}).
By Fact \ref{switch gen sets} we may assume without loss of generality that
the generators of $F$ are also generators
of $G$ and that $C$ is a minimal set of generators. In $F$,
for each $m\in \N$, we can find a finite set $C_m$ such that $d(e,a)\geq \nu_m$ for any $a\in C_m$
and so that $(a\cdot b)_e\leq \sqrt{\nu_m}$ for distinct $a, b\in C_m$.
The Cayley graph of $F$ is a subgraph of the Cayley graph of $G$, and the Cayley graph 
of a free group is an $\R$-tree, thus we use $\R$-tree terminology to describe how to find $C_m$.

Given $m\in \N^{>0}$ let $n_m$ be the largest integer so that $n_m\leq \sqrt{\nu_m}$.
Note that $n_m\to\infty$. There are $4\cdot3^{{n_m}-1}$ elements of $F$ with distance $n_m$ 
from $e$. Let $c$ denote such an element. There are 3 branches at $c$ 
in the Cayley graph of $F$ that do not contain $e$.
On each of these branches, choose a point $a$ with $d(a, e)\geq \nu_m$.
Repeat this process for each $c$ with distance $n_m$ from $e$.
Let $C_m$ be the collection of all the points $a$. Then $|C_m|=4\cdot3^{n_m}$, and since $n_m\to \infty$
we know $|C_m|\to\infty$. For any distinct $a, b\in C_m$, the distance from $e$ to $[a,b]$
is at most $n_m\leq \sqrt{\nu_m}$, because $[a,b]$ will always contain at least one of the points
in $F$ with distance $n_m$ from $e$.

Since $|C_m|\to \infty$ and $U$ is a non-principal ultrafilter on $\N$,
we know that $\displaystyle \Pi_m C_m/U$ is a set of points in the asymptotic cone with cardinality $2^\omega$.
Moreover, in the asymptotic cone we have
$$d([a_m],[e])=\lim_U \frac{d_C(a_m, e)}{\nu_m}\geq 1 \text{.}$$
Since $(a_m\cdot b_m)_e\leq \sqrt{\nu_m}$, we know the distance from
 $[e]$ to the geodesic segment connecting $[a_m]$ and $[b_m]$ is
$$([a_m]\cdot [b_m])_{[e]}=\lim_U \frac{(a_m\cdot b_m)_e}{\nu_m}=0 \text{.} $$
Thus, $[e]\in \left[[a_m],[b_m]\right]$, putting $[a_m]$ and $[b_m]$ on separate branches
at $[e]$.  This gives us $2^\omega$-many distinct
branches at $[e]$ in $\mathrm{Con}_{U, (\nu_m)}(G, d_C, e)$. Therefore,
there must be $2^\omega$-many branches at every point in the cone.
\end{proof}

\begin{corollary}
Let $G$ be a non-elementary hyperbolic group.
Let $\M$ be the model of $\rbRT_r$ with underlying space
equal to the closed $r$-ball of  $\mathrm{Con}_{U, (\nu_m)}(G, d_C, e)$.
Then $\M$ is the unique saturated model of $\rbRT_r$ of density $2^\omega$.
\end{corollary}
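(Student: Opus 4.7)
The plan is to verify that $\M$ satisfies the two criteria which uniquely determine the saturated model of $\rbRT_r$ of density $2^\omega$, namely (i) the density character of $\M$ equals $2^\omega$, and (ii) $\M$ is $2^\omega$-saturated. Once these hold, the conclusion follows from completeness of $\rbRT_r$ (Corollary following Lemma \ref{qe}) together with the standard uniqueness of saturated models of a complete theory in a fixed density. The existence of such a saturated model is ensured by Theorem \ref{rbrtstability}, since $\rbRT_r$ is $2^\omega$-stable (because $(2^\omega)^\omega = 2^\omega$).

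For (i), since $G$ is countable the asymptotic cone is a quotient of a subset of $G^{\N}$, so it has cardinality at most $|G|^\omega = 2^\omega$, and the same upper bound holds for the closed $r$-ball underlying $\M$. For the matching lower bound, the preceding lemma supplies $2^\omega$ branches at every point of the cone; applying Lemma \ref{kappabranch}(1) at a single interior point of the ball produces $2^\omega$ elements with pairwise distances bounded below, so the density character of $\M$ is exactly $2^\omega$.

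For (ii), I would apply Theorem \ref{kappasaturated=kappabranching} with $\kappa = 2^\omega$. The one point requiring attention is that the branching hypothesis must be verified inside $\M$, not merely inside the ambient cone. This is immediate: if $x\in M$ with $d(p,x)<r$, then each of the $2^\omega$ branches at $x$ in the full cone contains a nontrivial initial segment lying inside the closed $r$-ball (since there is room of length $r-d(p,x)>0$), and initial segments of distinct branches at $x$ meet only at $x$. Thus $\M$ has $2^\omega$ branches at every interior point, and Theorem \ref{kappasaturated=kappabranching} yields $2^\omega$-saturation.

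The main, and really only, obstacle is the branch-restriction observation in (ii); once that is noted, the rest is bookkeeping within the framework already developed, chiefly Theorems \ref{kappasaturated=kappabranching} and \ref{rbrtstability}, Lemma \ref{kappabranch}, and the preceding lemma describing branching in asymptotic cones of non-elementary hyperbolic groups.
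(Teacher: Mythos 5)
Your proposal is correct and follows essentially the same route as the paper: density character $2^\omega$ from the branching lemma together with Lemma \ref{kappabranch}, and $2^\omega$-saturation from Theorem \ref{kappasaturated=kappabranching}, with uniqueness coming from the standard fact about saturated models of a complete theory. Your explicit check that the $2^\omega$-branching of the cone persists at interior points of the closed $r$-ball is a detail the paper leaves implicit, and it is handled correctly.
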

\begin{proof}
By the preceding lemma and Lemma \ref{kappabranch}, we know $\M$ has density 
$2^\omega$ and $\M$ is $2^\omega$-saturated by Lemma \ref{kappasaturated=kappabranching}.
\end{proof}

\section{Models of \texorpdfstring{$\rbRT_r$}{rbRTr}: Constructions and non-categoricity}
\label{noncat}

In this section we show that $\rbRT_r$ has the maximum number of
models of density character $\kappa$ for every infinite cardinal $\kappa$.
Indeed, for each $\kappa$ we construct a family of $2^\kappa$-many such models such that
no two members of the family are  homeomorphic.  (Two models 
of $\rbRT_r$ are homeomorphic if their underlying $\R$-trees are homeomorphic 
by a map that takes base point to base point.  Note that non-homeomorphic models
of $\rbRT_r$ are necessarily non-isomorphic.)  First we treat 
separable models, and the amalgamation techniques used
in that case also allow us to characterize the principal types of $\rbRT_r$ and
to show that this theory has no atomic model.  Then we use simple amalgamation
constructions to handle nonseparable models.

\begin{lemma}
\label{separabletreewitharbbranching}
Let $S$ be a non-empty set of integers, each of which is $\geq 3$.
There exists a separable richly branching $\R$-tree $M$ such that
\begin{enumerate}
\item for each $k\in S$ the set $\{x\in M\mid x \text{ has degree }k\}$ is dense in $M$, and
\item given a branch point $x\in M$ the degree of $x$ is an element of $S$.
\end{enumerate}
\end{lemma}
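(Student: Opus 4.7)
The plan is to construct $M$ as the metric completion of a chain $M_0\subseteq M_1\subseteq M_2\subseteq\cdots$ of finite pointed $\R$-trees of radius $\leq r$, where each $M_{i+1}$ is obtained from $M_i$ by attaching finitely many new geodesic segments at interior points of segments of $M_i$ via Lemma \ref{chiswell add to points}. I will take $M_0$ to be a single geodesic segment $[p,q_0]$ of length $r$.

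To arrange the density requirements, I plan to fix an enumeration $(\epsilon_i,k_i)_{i\in\N}$ of pairs in $\Q^{>0}\times S$ such that for every $\epsilon\in\Q^{>0}$ and $k\in S$ the set $\{i:\epsilon_i<\epsilon\text{ and }k_i=k\}$ is infinite. At stage $i$, I would choose a finite $\epsilon_i$-net $V_i$ of $M_i$ and, for each $v\in V_i$ not already within $3\epsilon_i$ of a previously designated branch point of degree $k_i$, pick an interior point $x$ of a segment of $M_i$ with $d(v,x)<2\epsilon_i$ and attach $k_i-2$ new geodesic segments of length $r-d(p,x)$ at $x$. Requiring $x$ to be interior to a segment forces $d(p,x)<r$, since in a pointed $\R$-tree of radius $r$ any point at distance exactly $r$ from $p$ must be an endpoint. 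So $x$ becomes a designated branch point of degree exactly $k_i\in S$, and $M_{i+1}$ remains a finite $\R$-tree of radius $\leq r$. Finally, let $M=\overline{\bigcup_i M_i}$.

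The routine verifications are separability, condition (1), and the richly branching condition. Separability holds because $\bigcup_i M_i$ is a countable union of finite sets. For condition (1), given $y\in M$, $\epsilon>0$, and $k\in S$, picking $y'\in M_N$ close to $y$ and a later stage $i$ with $\epsilon_i$ sufficiently small and $k_i=k$ forces a degree-$k$ designated branch point near $y$. Richly branching then follows because each designated branch point is created with at least three attached segments of length $r-d(p,\cdot)$, so the required full-height tripod sits at its location; since designated branch points are dense in $M$, this property holds on a dense set.

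The main obstacle will be condition (2), that every branch point of $M$ has degree in $S$. The designated branch points are safe: because every fresh $x$ is chosen to be a degree-$2$ interior point, no designated branch point ever acquires extra branches at later stages, so each retains degree $k_i\in S$ in $M$. The subtler danger is new branch points arising at limit points $x\in M\setminus\bigcup_i M_i$, which I plan to rule out by a tripod argument: if such an $x$ had three distinct branches in $M$, density of $\bigcup_i M_i$ would yield $y_1,y_2,y_3\in\bigcup_i M_i$ lying in three different branches at $x$, forcing $x=Y(y_1,y_2,y_3)$ in $M$. Picking $n$ large enough that $y_1,y_2,y_3\in M_n$, the inclusion $M_n\hookrightarrow M$ is isometric and so preserves geodesics; hence $Y(y_1,y_2,y_3)$ computed in $M_n$ coincides with the one computed in $M$, placing $x\in M_n\subseteq\bigcup_i M_i$ and contradicting the choice of $x$. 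Therefore every limit point of $\bigcup_i M_i$ has degree at most $2$ in $M$, and every branch point of $M$ is a designated branch point, necessarily of degree in $S$.
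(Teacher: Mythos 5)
Your overall strategy is sound and runs parallel to the paper's: both proofs iteratively attach new branches, via Lemma \ref{chiswell add to points}, at a controlled and eventually dense set of degree-two points, on a schedule that makes each degree $k\in S$ occur densely while guaranteeing that no branch point of degree outside $S$ is ever created. The main difference is that the paper builds an \emph{unbounded} tree (starting from $\R$ and attaching rays), so every branch at every point automatically has infinite height and the richly branching property is immediate; you instead work inside the radius-$r$ ball and complete at the end. Your tripod argument showing that points of $M\setminus\bigcup_i M_i$ have degree at most $2$ is correct and is in fact more careful than the paper, which only asserts in passing that completion adds points of degree one. (Minor slip: $\bigcup_i M_i$ is a countable union of finitely spanned --- hence separable, but certainly not finite --- trees; separability still follows.)

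The genuine gap is in your verification that $M$ is richly branching in the sense of Definition \ref{richlybranching}. You claim each designated branch point ``is created with at least three attached segments of length $r-d(p,\cdot)$,'' but you attach only $k_i-2$ segments, which is $1$ when $k_i=3$ (e.g.\ when $S=\{3\}$). At such a point $x$, the branches of guaranteed height $r-d(p,x)$ at the moment of creation are the single new segment and the half of the ambient segment on the side away from $p$ (which contains an endpoint at distance $r$ from $p$) --- only two. The third branch, the one containing $p$, has height only about $d(p,x)$ at that stage, which is strictly less than $r-d(p,x)$ whenever $d(p,x)<r/2$, so the condition defining the set $B$ of Definition \ref{richlybranching} is not witnessed at $x$ by your stated reason. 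The conclusion is still true in the completed tree: once designated branch points are dense, the branch at $x$ containing $p$ contains a designated point $z$ close to $p$, and an endpoint $w$ of a segment attached at $z$ with $d(p,w)=r$ lying in a branch at $z$ containing neither $p$ nor $x$; then $d(x,w)\geq d(p,w)-d(p,x)=r-d(p,x)$ by Lemma \ref{branch}. But that is an argument about the limit tree, not about the creation step, and it must be supplied. A cleaner repair is to take $M_0$ to be a segment $[q_1,q_2]$ of length $2r$ with $p$ its midpoint: then for every $x\neq p$ the branch at $x$ containing $p$ also contains $q_1$ or $q_2$ and hence a point at distance at least $r-d(p,x)$ from $x$, so all $k_i\geq 3$ branches at each designated point have sufficient height already at the stage where the point is designated.
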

\begin{proof}
Let $(k_j\mid j\in \N)$
be a sequence such that every element of $S$ appears infinitely many times in the sequence,
and every term of the sequence is an element of $S$.
We construct an increasing sequence $N_0\subseteq N_1\subseteq \dots \subseteq N_j\dots $ of 
separable $\R$-trees as follows.

Let $N_0$ be the $\R$-tree $\R$ with base point $0$.
Let $A_0$ be a countable, dense subset of $N_0$. 
Use Lemma \ref{chiswell add to points} to add $k_0-2$ distinct rays (copies of $\R^{\geq 0}$) at each
point in $A_0$, bringing the number of branches of infinite length at each point in $A_0$ up to $k_0$.
Call the resulting $\R$-tree $N_1$. Note that $N_0\subseteq N_1$.
The $\R$-tree $N_1$ is separable, since it is a countable union of separable spaces.
Note also that all the points in $N_1\setminus A_0$ only have 2 branches,
and it is straightforward to show $N_1\setminus A_0$ is uncountable and dense in $N_1$.

Once $N_j$ has been constructed, to construct $N_{j+1}$
let $\displaystyle A_{j}\subset N_j\setminus (\cup_{i=0}^{j-1} A_j)$ be a countable, dense subset of $N_j$. 
This is possible since $\displaystyle  N_j\setminus (\cup_{i=0}^{j-1} A_j)$ is dense in $N_j$.
Use Lemma \ref{chiswell add to points} to add $k_j-2$ rays at each
point in $A_j$, bringing the number of branches of infinite length at each point in $A_j$ up to $k_j$.
The resulting $\R$-tree is $N_{j+1}$.
Note that $N_{j+1}$ is separable, since it is a countable union of separable spaces.
Note also that all the points in $\displaystyle N_{j+1}\setminus \cup_{j=0}^{j} A_j$ still only have 2 branches,
and that this set is uncountable and dense in $N_{j+1}$.
Lastly, it is clear that given $x\in N_{j+1}$ the number of branches at $x$ must be
either $2$ (in which case $x$ is not a ``branch point") or one of $\{k_0, \dots, k_j\}$.
This is because at the $j$th step, the only points at which we add rays are those in $A_j$,
and then in subsequent steps we do not add rays at any of those points.

Let $\displaystyle M=\cup_{j\in\N} N_j$ be the union of this countable chain of separable 
$\R$-trees. Then $M$ is a separable $\R$-tree (see Chiswell \cite[Lemma 2.1.14]{C}.)
Since for each $N_j$ the number of branches at each branch point is an element
of $S$, this will also be true in $M$.
Let $k\in S$. Let $\displaystyle J(k)=\{j\in \N\mid k_j=k\}$. By how we chose the sequence $(k_j)$ the set $J(k)$
is infinite.  The set of points in $M$ which have exactly $k$ branches is $\cup_{j\in J(k)} A_j$.
We will show this set is dense in $M$.
Let $x\in M$. Let $j_x\in \N$ be the smallest integer such that $x\in N_{j_x}$. Let $j^*\in J(k)$ be
such that $j^*>j_x$. We know that $A_{j^*}$ is dense in $N_{j^*}$, and that $\displaystyle x\in N_{j_x}\subseteq N_{j^*}$.
Therefore, there are points in $\displaystyle A_{j^*}\subseteq  \cup_{j\in J(k)} A_j$ arbitrarily close to $x$.
Our choice of $x\in M$ was arbitrary, therefore $\displaystyle \cup_{j\in J(k)} A_j$ is dense in $M$.
Because we chose a non-empty $S$ with members all $\geq 3$
the set of branch points with at least 3 branches of infinite length is dense in $M$. Therefore, $M$
is a richly branching $\R$-tree.
\end{proof}

\begin{remark}
In the preceding proof, we did not use the fact that we are considering homeomorphisms
of \emph{pointed} topological spaces. The $\R$-trees constructed in Lemma \ref{separabletreewitharbbranching}
are in fact non-homeomorphic, even when we are not required to preserve the base point.
\end{remark}

\begin{theorem}  
\label{number of separable models}
There exist $2^{\omega}$-many pairwise non-homeomorphic
(hence non-isomorphic) separable models of $\rbRT_r$.
\end{theorem}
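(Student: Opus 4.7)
The plan is to apply Lemma \ref{separabletreewitharbbranching} with different subsets $S \subseteq \{n \in \Z : n \geq 3\}$ to produce $2^{\omega}$ separable models of $\rbRT_r$, and then distinguish them via the topological degrees realized at their branch points. For each non-empty such $S$, let $M_S$ be the separable richly branching $\R$-tree given by that lemma, let $\hat M_S$ denote its metric completion (still a separable $\R$-tree by \cite[Lemma 2.4.14]{C}), fix a basepoint $p_S \in M_S$, and take $\M_S$ to be the $L_r$-structure based on the closed $r$-ball around $p_S$ in $\hat M_S$. Since the collection of non-empty subsets of $\{3,4,5,\ldots\}$ has cardinality $2^{\omega}$, this produces the required number of candidate models.

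I would first verify $\M_S \models \rbRT_r$. The underlying pointed $\R$-tree is complete and has radius exactly $r$, since $N_0 = \R$ in the construction of $M_S$ provides points at every distance from $p_S$. The richly branching condition (Definition \ref{richlybranching}) is inherited from $M_S$: branch points of $M_S$ are dense in $M_S$ and hence in the $r$-ball, and each branch at such a point has infinite height (all new branches in the construction come from rays), so its intersection with the $r$-ball has height at least $r - d(p_S, \cdot)$.

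For $S \neq S'$, I would show that $\M_S$ and $\M_{S'}$ are non-homeomorphic. Without loss of generality, pick $k \in S \setminus S'$; note $k \geq 3$. A homeomorphism of pointed $\R$-trees preserves the topological degree at each point (the number of branches, equivalently the number of connected components of a small punctured neighborhood). In $\M_S$, Lemma \ref{separabletreewitharbbranching}(1) gives that the degree-$k$ points are dense in $M_S$; some such point $x$ is an interior point of the $r$-ball, its $k$ branches in $M_S$ all have infinite height, and each contributes a nonempty piece to the ball, so $x$ has degree $k$ in $\M_S$. In $\M_{S'}$, I claim no point has degree $k$: (i) boundary points (at distance $r$ from $p_{S'}$) have degree $1$, since any other branch would contain a point at distance $>r$ from $p_{S'}$; (ii) interior points lying in $M_{S'}$ have degree in $S' \cup \{1,2\}$ by Lemma \ref{separabletreewitharbbranching}(2); (iii) any interior point $\hat x$ of the ball in $\hat M_{S'} \setminus M_{S'}$ has degree $1$, because otherwise $\hat x$ would lie strictly between two points $y_1, y_2 \in M_{S'}$ on the $\hat M_{S'}$-geodesic $[y_1, y_2]$, yet the $M_{S'}$-geodesic $[y_1, y_2]_{M_{S'}}$ exists and, by uniqueness of geodesics in $\R$-trees, coincides with $[y_1, y_2]_{\hat M_{S'}}$, forcing $\hat x \in M_{S'}$, a contradiction. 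Since $k \geq 3$ and $k \notin S'$, no point of $\M_{S'}$ has degree $k$, and the two models are non-homeomorphic.

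The main obstacle is controlling branch-point degrees through metric completion and restriction to the $r$-ball, so that no degrees outside $S' \cup \{1,2\}$ appear as branch degrees of $\M_{S'}$. The uniqueness-of-geodesics argument handles completion (it only introduces endpoints), and the observation about branches pointing away from $p_{S'}$ handles the boundary of the ball. Since each non-empty $S \subseteq \{3,4,5,\ldots\}$ yields such a model, we obtain $2^{\omega}$ pairwise non-homeomorphic, hence pairwise non-isomorphic, separable models of $\rbRT_r$, which is the maximum possible number.
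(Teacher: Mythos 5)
Your proposal is correct and follows essentially the same route as the paper: build one model per subset $S\subseteq\{3,4,5,\dots\}$ via Lemma \ref{separabletreewitharbbranching}, pass to the completed closed $r$-ball, and distinguish models by which branch-point degrees occur. You supply more detail than the paper does on why completion and restriction to the $r$-ball only introduce points of degree $1$ (and preserve degrees at interior points), which the paper dispatches with a one-line parenthetical remark; those details are accurate.
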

\begin{proof}
Any homeomorphism $g$ between models $\M$ and $\n$ of $\rbRT_r$ is a homeomorphism
on the underlying $\R$-trees which must preserve branching.  In particular, given $n\in \N^{\geq 3}$,
if there is a point with degree $n$ in $M$, then there must be a point with degree $n$ in $N$.
Choose two different subsets $S$ and $S'$ of $\N^{\geq 3}$, and construct
a richly branching tree for each as in Lemma \ref{separabletreewitharbbranching}. Let
$\M$ and $\M'$ be the models of $\rbRT_r$ based on the completions of their closed $r$-balls, respectively.
By Lemma \ref{completing a tree}, taking the completion only adds points of degree 1.
It follows that $\M$ and $\M'$ cannot be homeomorphic.
Since there are $2^\omega$-many different such sets $S$,
there are $2^\omega$-many different non-homeomorphic, separable models of $\rbRT_r$.
\end{proof}

Recall that given a continuous theory $T$, a type $q\in S_n(T)$ is \emph{principal}
if for every model $\M$ of $T$, the set $q(\M)$ of realizations of $q$ in $\M$ is definable
over the empty set.  As in classical first order logic, given a complete theory in a countable signature, there is
a Engeler-Ryll-Nardzewski-Svenonius theorem stating the equivalence between $\omega$-categoricity and the fact
that every type is principal.  (See 
Ben Yaacov, Berenstein, Henson and Usvyatsov \cite[Theorem 12.2]{N}.)   
Furthermore, a type $q$ is principal
if and only if $q$ is realized in every model of $T$.  (See \cite[Theorem 12.6]{N}.)

Theorem \ref{number of separable models} obviously implies that $\rbRT_r$ is not $\omega$-categorical, 
and thus not every type of $\rbRT_r$ is principal.   Our next result gives a characterization of the 
principal  types in $S_n(\rbRT_r)$. In particular,
a principal type is the type of a tuple of points that all lie along a single piecewise segment
with $p$ as an endpoint. Thus, there are very few of them. 
As a consequence, we conclude that $\rbRT_r$ does not have a prime model 
(equivalently, does not have an atomic model, one in which only principal types are realized).

For a clear and comprehensive treatment of separable models in continuous model theory,
we refer the reader to Section 1 in 
Ben Yaacov and Usvyatsov \cite{BUdfinite}.  
Note that where we and 
Ben Yaacov, Berenstein, Henson and Usvyatsov \cite{N} use the 
word \emph{principal}, the authors of \cite{BUdfinite} use \emph{isolated,} which is now the standard terminology.
In (\cite[Theorem 1.11]{BUdfinite}) they prove an omitting types theorem, and as a 
corollary (\cite[Corollary 1.13]{BUdfinite}) show that nonprincipal types can be omitted.
Further, it follows from \cite[Definition 1.7]{BUdfinite} and properties of definable sets in 
continuous model theory, that every principal type is realized in every model, and this is implicit 
in the discussion following that definition.

\begin{theorem}
\label{principal types}
Let $q\in S_n(\rbRT_r)$. The following are equivalent
\begin{enumerate}
\item The type $q$ is principal.
\item There exists a permutation $\sigma$ of $\{1,\dots,n\}$ such that $q$ contains
the condition 
\[
d(p, x_{\sigma(n)})=d(p, x_{\sigma(1)})+\sum_{i=1}^{n-1} d(x_{\sigma(i)}, x_{\sigma(i+1)}) \text{.}
\]
\item  There exists a permutation $\sigma$ of $\{1,\dots,n\}$ such that 
for any model $\M$ of $\rbRT_r$ and $(b_1,\dots, b_n)\in M^n$ that realizes $q$ in $\M$,  
we have that $[p,b_{\sigma(1)},\dots,b_{\sigma(n)}]$ is a piecewise segment in $\M$.
\end{enumerate}
\end{theorem}
\begin{proof}
That (2) and (3) are equivalent (for the same permutation $\sigma$) 
follows from Chiswell \cite[Lemma 2.1.4]{C}.  See the discussion
after Definition \ref{piecewise segment}.

We show (1) implies (3) by proving the contrapositive.
Let $\M\models \rbRT_r$ and suppose $(b_1, \dots, b_n)\in M^n$ realizes $q$ in $\M$.
For each $i$ let $t_i$ be the number $d(p,b_i)$.
Choose $j \in \{1,\dots,n\}$ such that $d(p,b_j) \geq d(p,b_i)$ holds for every
$i \in \{1,\dots,n\}$.  If (3) fails, there must be $i$ such that $b_i$ is not
on the segment $[p,b_j]$.  Note that this implies that $p,b_i,b_j$ are distinct,
so $t_i$ and $t_j$ are $>0$.  (If $d(p,p_j)=0$ then $d(p,b_i)=0$ for all $i$, meaning
that $b_1 = \dots b_n = p$, contradicting the assumption that (3) fails.)

Let $e=Y(p,b_i,b_j)$ be the closest point to $b_i$ on the segment $[p, b_j]$. If $e=b_j$, then
$$d(p, b_i)=d(p, e)+d(e, b_i)=d(p, b_j)+d(b_j, b_i)>d(p, b_j) \text{,}$$
which is a contradiction.
This leaves two possibilities.

Case 1: Assume $e=p$. Then $b_i$ and $b_j$ are on different branches at $p$, so
$q$ contains the conditions $d(x_i,x_j) = d(p,x_i)+d(p,x_j)$, $d(p,x_i)=t_i$ and
$d(p,x_j)=t_j$.  It follows that whenever $(c_1,\dots,c_n)$ realizes $q$ in any model
$\n$ of $\rbRT_r$, the points $c_i$ and $c_j$ must be on different branches at $p$.
However, using techniques as in
\ref{separabletreewitharbbranching}, we can construct $\n\models \rbRT_r$ in which there
is only a single branch at $p$. Then $q$ is not realized in $\n$ and thus $q$ is not principal.

Case 2: Assume $e\not=p$.
Then $p$ and $b_j$ are on different branches at $e$,
and Lemma \ref{!geo} implies that $e\in [p, b_i]$ and $e\in [b_i, b_j]$. 
So $p, b_i, b_j$, being 3 distinct points, are on 3 distinct branches at
$e = Y(p,b_i,b_j)$.  As discussed in the preceding case, these properties
of $p,b_i,b_j$ are witnessed by conditions in $q$; also, $q$ contains the
condition $d(p,Y(p,x_i,x_j)) = t$ for some $t \in (0,r]$.  It follows that in
any model of $\rbRT_r$ in which $q$ is realized, there must be a point $c$
whose distance from $p$ is $t$ and at which the model has 3 branches.
Using techniques discussed earlier in this section, we may 
build a model $\n$ in which there are no branch points at distance
$t$ from $p$. Then $q$ is not realized in $\n$, implying that $q$ is not
a principal type.

Last, we show (3) implies (1).
Let $q\in S_n(\rbRT_r)$ be a type satisfying the conditions in (3) (and hence in (2)) with permutation $\sigma$.
Then $d(p, x_{\sigma(n)})^q\geq d(p, x_{\sigma(i)})^q$ for all $i\in \{1,..,n\}$.
Take any model $\M\models \rbRT_r$. Then there is at least one branch
at the base point $p$, and by Lemma \ref{branches of sufficient height} along this branch 
we can find a point $b$ so that $d(p, b)=d(p, x_{\sigma(n)})^q$. On the segment $[p,b]$ we
can find points $b_1,\dots,b_n$ such that $d(p,b_{\sigma(i)}) = d(p, x_{\sigma(i)})^q$ for
all $i$.  From this it follows that $(b_1,\dots,b_n)$ realizes $q$ in $\M$.  Since $q$ is
realized in every model of $\rbRT_r$, it is a principal type.
\end{proof}

\begin{corollary}
The $L$-theory $\rbRT_r$ has no prime model.
\end{corollary}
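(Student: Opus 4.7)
The plan is to derive the corollary directly from the characterization of principal types in Theorem \ref{principal types} together with the defining geometric feature of models of $\rbRT_r$, namely, the density of genuine branch points. Since the paper identifies ``prime'' with ``atomic'' for its purposes, it suffices to exhibit, in an arbitrary $\M \models \rbRT_r$, a finite tuple whose type over $\emptyset$ is not principal.

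First, I would fix an arbitrary model $\M \models \rbRT_r$ with underlying pointed $\R$-tree $(M,d,p)$ and, using Theorem \ref{rbrt} (or directly Lemma \ref{branching point within epsilon}), select a point $a \in M$ at which there are at least three branches. If $a \ne p$, I would choose $b_1, b_2$ on two distinct branches at $a$, neither of which is the (unique) branch at $a$ containing $p$; if $a = p$, I would simply take $b_1, b_2$ on two distinct branches at $p$. In either configuration, Lemma \ref{branch} shows that the subtree $E_{\{p,b_1,b_2\}}$ is ``Y-shaped'' with central vertex $a$ and three legs reaching to $p$, $b_1$, $b_2$ respectively (these being disjoint except at $a$).

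Next, I would check that $\{p, b_1, b_2\}$ cannot be arranged along a piecewise segment with $p$ as an endpoint. Neither $b_1 \in [p,b_2]$ nor $b_2 \in [p,b_1]$ can hold, because the geodesic segment $[p, b_i]$ factors through $a$ as $[p,a,b_i]$, and $b_j$ lies on a branch at $a$ disjoint from both $[p,a]$ and $[a,b_i]$ for $j \ne i$. Thus condition (4) of Theorem \ref{principal types} fails for $(b_1, b_2)$, so $\tp_\M(b_1, b_2)$ is not principal.

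Consequently $\M$ realizes a non-principal $2$-type over $\emptyset$, so $\M$ is not atomic. Since $\M$ was arbitrary and the paper takes atomic and prime to be equivalent, $\rbRT_r$ has no prime model. The only point requiring mild care is the case $a = p$, which I would handle together with the generic case by noting that the Y-shape argument above applies verbatim once we observe that the branch of $a$ containing $p$ is simply empty when $a = p$. No part of the argument appears to present a real obstacle; the work has already been done in Theorem \ref{principal types}, and the corollary is essentially a one-line consequence of density of branching.
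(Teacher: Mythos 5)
Your proof is correct and takes essentially the same route as the paper: both arguments combine Theorem \ref{principal types} with the richly branching property, the paper phrasing it contrapositively (if every $2$-type were principal, $M$ would be a single piecewise segment from $p$, hence not richly branching) while you make the witness explicit by producing a non-principal $2$-type at a branch point. The extra care you take with the case $a=p$ and with choosing $b_1,b_2$ off the branch containing $p$ is sound and fills in exactly the detail the paper leaves implicit.
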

\begin{proof}
Assume $\rbRT_r$ has a prime model $\M$ with underlying $\R$-tree
$(M, d,p)$. Then $\M$ is atomic and the type
of any tuple $b_1,\dots,b_n$ must be principal. By the preceding theorem, this means
that for any pair $b_1, b_2$ in $M$, either $b_1\in [p, b_2]$ or $b_2\in [p, b_1]$.
It follows that $M$ consists of a piecewise segment with endpoint $p$.  In particular, 
$\M$ is not richly branching, which is a contradiction.
\end{proof}

We finish this section by showing that when $\kappa$ is uncountable,
then the number of different models of $\rbRT_r$ having density character
equal to $\kappa$ is also the maximum possible, namely $2^\kappa$.
As in the case $\kappa = \omega$, which was treated in the first part of this
section, we produce large sets of models that are not only non-isomorphic,
but in fact have underlying $\R$-trees which are non-homeomorphic
(as pointed topological spaces).

We will carry out the construction by induction on $\kappa$, and we begin with a useful lemma. 
\begin{lemma}
\label{getting many nonseparable models}
Let $\kappa$ be an uncountable cardinal.  
The following conditions are equivalent:
\newline
(1) The number of non-homeomorphic models of $\rbRT_r$ of density 
character $\leq \kappa$ is at least $\kappa$.
\newline
(2) The number of non-homeomorphic models of $\rbRT_r$ of density character $\leq \kappa$ 
that have just one branch at the base point is at least $\kappa$.
\newline
(3) The number of non-homeomorphic models of $\rbRT_r$ of density 
character $= \kappa$ is $2^\kappa$.
\end{lemma}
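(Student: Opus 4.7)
The plan is to close the cycle $(3)\Rightarrow(1)\Rightarrow(2)\Rightarrow(3)$. Throughout, I would rely on the following fact: the pointed homeomorphism type of any model $\M$ of $\rbRT_r$ is completely determined by the multiset of pointed homeomorphism types of its branches at $p$ (each branch $\beta$ considered together with $p$ as the sub-pointed-tree $(\beta\cup\{p\},p)$), since any pointed homeomorphism must send branches at $p$ to branches at $p$.

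The implication $(3)\Rightarrow(1)$ is immediate from $2^\kappa\geq\kappa$ and the fact that density character $=\kappa$ implies density character $\leq\kappa$.

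For $(2)\Rightarrow(3)$ the idea is amalgamation at the basepoint. Given pairwise non-homeomorphic single-branch models $\{\mathcal{L}_\alpha:\alpha<\kappa\}$ of density $\leq\kappa$, for each subset $S\subseteq\kappa$ with $|S|=\kappa$ I would let $\n_S$ be the pointed $\R$-tree obtained from $\bigsqcup_{\alpha\in S}\mathcal{L}_\alpha$ by identifying all basepoints to a common point $p$. Three things need checking: (a) $\n_S$ is a model of $\rbRT_r$ (rich branching at $p$ holds directly since $|S|\geq 3$ branches each reach distance $r$, and at any other point we are inside a single $\mathcal{L}_\alpha$ where the rich-branching axiom has been assumed); (b) the density character of $\n_S$ equals $\kappa$, since $|S|=\kappa$; and (c) $\n_S\cong\n_T$ iff $S=T$, because the $\mathcal{L}_\alpha$ are pairwise non-homeomorphic so the branch multiset of $\n_S$ at $p$ recovers $S$. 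The number of $\kappa$-subsets of $\kappa$ is $2^\kappa$, giving $2^\kappa$ pairwise non-homeomorphic models of density $\kappa$.

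For $(1)\Rightarrow(2)$, which is the most delicate direction, suppose $\{\M_\alpha:\alpha<\kappa\}$ are pairwise non-homeomorphic of density $\leq\kappa$. Let $\mathcal{T}$ be the union over $\alpha$ of the pointed homeomorphism types of branches at $p$ appearing in $\M_\alpha$. If fewer than $\kappa$ such branch types arose, each $\M_\alpha$ would correspond to a multiset over a set of size $<\kappa$ with total multiplicity $\leq\kappa$; a cardinal arithmetic argument in the uncountable regime (exploiting the density constraint and the fact that the number of branches at $p$ is bounded by the density character) would then bound the number of distinct $\M_\alpha$ below $\kappa$, contradicting (1). Hence $|\mathcal{T}|\geq\kappa$. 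Each $(\beta\cup\{p\},p)\in\mathcal{T}$ is a pointed $\R$-tree with one branch at $p$ but is not automatically a model of $\rbRT_r$, because rich branching at $p$ may fail in isolation. To convert it into a single-branch model, I would choose a standardized sequence $a_n\in\beta$ with $a_n\to p$ and attach at each $a_n$, via Lemma~\ref{chiswell add to points}, two fresh rays of length $r-d(p,a_n)$, supplying the extra branches needed at $a_n$ for the rich-branching axiom. The resulting tree $\widetilde{\beta}$ is a single-branch model of $\rbRT_r$ of density $\leq\kappa$, and the assignment $(\beta\cup\{p\},p)\mapsto\widetilde{\beta}$ is injective on pointed homeomorphism types, because the standardized attachments are topologically recognizable and can be stripped off to recover the original branch.

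The main obstacle is the counting step $|\mathcal{T}|\geq\kappa$ in $(1)\Rightarrow(2)$: since general cardinal arithmetic permits $\kappa^{\sigma}\geq\kappa$ for $\sigma<\kappa$, the bound must be obtained by leveraging the specific $\R$-tree structure and the density character constraint rather than pure counting, and ensuring that the conversion $(\beta\cup\{p\},p)\mapsto\widetilde{\beta}$ preserves non-homeomorphism is the other delicate point.
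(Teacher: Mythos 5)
Your implications $(3)\Rightarrow(1)$ and $(2)\Rightarrow(3)$ are fine and coincide with the paper's argument (glue single-branch models at their base points and read off the subset $S$ from the multiset of branch types at $p$). The problem is $(1)\Rightarrow(2)$: the key claim that the family of $\kappa$ many non-homeomorphic models must exhibit at least $\kappa$ many pointed homeomorphism types of branches at the base point is simply false, and you correctly sense this but do not repair it. Concretely, take $\kappa=\omega_1$ and countably many pairwise non-homeomorphic separable single-branch models $\mathcal{L}_n$; for each infinite $S\subseteq\omega$ glue $\{\mathcal{L}_n:n\in S\}$ at their base points. This produces $2^\omega\geq\omega_1$ pairwise non-homeomorphic separable models (so condition (1) holds with room to spare), yet the set $\mathcal{T}$ of branch types occurring in the whole family is countable. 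So no cardinal arithmetic, however clever, can extract $\kappa$ many single-branch models from the branches of the given family: the information distinguishing the models can live entirely in \emph{which combinations} of a small set of branch types occur, not in the branch types themselves. Since this step is the engine of your $(1)\Rightarrow(2)$, the cycle does not close.

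The paper takes a different route that sidesteps branch decomposition altogether. It first pigeonholes on the number $b(\M)$ of branches at the base point (only countably many possible values, so some value $b$ is attained by $\kappa$ many of the given models). It then builds one fixed separable ``marker'' model $\n$ with a single branch at its base point $p$, a single branch at a designated interior point $y$ with $d(p,y)=r/2$, and all degrees in $\{1,2,a\}$ for a chosen $a\neq b+1$; each $\M$ in the pigeonholed family is rescaled and grafted onto $\n$ at $y$. The resulting $\M^*$ has one branch at its base point, and the graft point is topologically recognizable (it is the unique point of degree $b+1$ along the marker segment), so $\M$ is recoverable from $\M^*$ up to pointed homeomorphism. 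If you want to salvage your approach, you need some such device that converts each \emph{whole} model into a single-branch model injectively on homeomorphism types, rather than trying to harvest single-branch models from the branches of the given family. Your secondary worry about $\beta\cup\{p\}$ failing to be richly branching is largely unfounded (one can check, using Lemma \ref{branches of sufficient height}, that a branch at the base point of a model of $\rbRT_r$, together with the base point, is again a model), but fixing that would not rescue the counting step.
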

\begin{proof}
Let $\kappa$ be an uncountable cardinal. Clearly, (3) implies (1).
To show (2) implies (3), assume (2) and  let $(\B_\alpha \mid \alpha <  \kappa)$ be 
a list of the pairwise non-homeomorphic models of $\rbRT_r$, each with density 
character $\leq \kappa$ and exactly one branch at the base point.  Given a subset 
$S\subseteq \kappa$ of cardinality $= \kappa$, take the collection of $B_\alpha$
for $\alpha\in S$ and glue them all together at their base points
using Theorem \ref{amalg}.  Call this amalgam $M_S$,
and let $p$ be the point in $M_S$ at which the $B_\alpha$ are all glued together. 
Make $p$ the base point of the $L_r$-structure $\M_S$.   Note that the density character 
of $\M_S$ is exactly $\kappa$, since each branch of 
its underlying $\R$-tree $M_S$ at $p$ has density $\leq \kappa$, 
and there are exactly $\kappa$-many branches at $p$.
Moreover, it is easy to check that $\M_S$ is a model of $\rbRT_r$.

By this construction, if $B$ ranges over the branches of $M_S$ at $p$, the
homeomorphism type of $B \cup \{p\}$ (with $p$ as distinguished element)
ranges bijectively over the homeomorphism types of $B_\alpha$ (also with $p$ as 
distinguished element) as $\alpha$ ranges over $S$.  It follows that the homeomorphism type of $\M_S$ determines $S$.
Therefore the family $\{ \M_S \mid S \subseteq \kappa \text{ and } S \text{ has cardinality } = \kappa \}$
verifies condition (3), since $\kappa$ has $2^\kappa$-many subsets of cardinality $= \kappa$.

Finally, we prove that (1) implies (2).
For each model $\M$ of $\rbRT_r$, let $b(M)$ denote the number of branches of $M$ at 
its base point; we take this to be a positive integer or $\infty$, where $b(M) = \infty$ means 
that there are infinitely many branches.  Since $\kappa$ is uncountable, condition (1) yields 
a class $\mathcal{K}$ of at least $\kappa$-many non-homeomorphic models of $\rbRT_r$, 
each of density character $\leq \kappa$, such that $b(M)$ has a constant value $b$ as $M$ 
ranges over $\mathcal{K}$.  We may assume $b \neq 1$, since otherwise condition (2) is 
satisfied by the models in $\mathcal{K}$.

Let $a$ be an integer $\geq 3$ that is different from $b+1$.  ( Note that $a\not=b+1$ is  automatically 
true if $b$ is $\infty$.)  Using a method similar to that in the proof of Lemma \ref{separabletreewitharbbranching}, 
we may take $\n=(N, d, p)$ to be a separable model of $\rbRT_r$ with the following properties: 
(1) for all $x$ in $N$, the number of branches in $N$ at $x$ is $1$ or $2$ or $a$; 
(2) $N$ has a single branch at its base point $p$; and (3) 
$N$ has a single branch at some point $y$, where $d^{\n}(p, y)={r}/{2}$.
To get single branches at 2 points with a given distance as required here,
proceed as in the proof of Lemma \ref{separabletreewitharbbranching}, 
starting with the interval $[0, {r}/{2}]\subseteq \R$
instead of $\R$, and in subsequent steps always exclude $0$ and ${r}/{2}$ 
from the sets of points where rays are added. 

Now consider an arbitrary $\M \in \mathcal{K}$, and denote the base point of $\M$ by $q$.
Scale the metric on $M$ down by a factor of 2, resulting in an $L_r$-structure with a radius of 
${r}/{2}$.
We construct a larger $\R$-tree $M^*$ by amalgamating the scaled-down $M$ and 
$\n$ in the way that identifies $q$ and $y$; we will denote this point of $M^*$ by $qy$.  
We take the base point of $\M^*$ to be the base point $p$ of $\n$.  The radius of  $\M^*$ 
is $r$, because the radius of $\n$ was $r$, and by our amalgamation construction, for 
every point $x\in M^*$ residing in the scaled down copy of $M$, 
$$d^{\M^*}(p,x)=d^{\M^*}(p, qy)+d^{\M^*}(qy, x)=\frac{r}{2}+\frac{d^{\M}(y, x)}{2}$$
which attains its maximum value $r$ as $x$ ranges over the scaled down copy of $M$.

It is straightforward to check that $\M^*$ is a model of $\rbRT_r$, has density $\leq \kappa$, 
and has a single branch at its base point.  Note that the branches of $M^*$ at the amalgamated 
point $qy$ consist of the branches of $q$ in $M$ together with the tree that results from 
$N$ by removing $y$.  In particular, this means that $M^*$ has $b+1$-many branches at $qy$.

We claim that the class $\mathcal{K}^* = \{ \M^* \mid \M \in \mathcal{K} \}$ verifies condition (3); 
it remains only to show that no two members of this class are homeomorphic.  (Recall that 
the homeomorphisms we consider must take base point to base point.)  The key to this is the 
fact that the point $qy$ can be topologically identified in $M^*$, given that we know the base 
point $p$.  To do this, note first that the segment $X=[p,qy)$ in $M^*$ is identical to the segment 
$[p,y)$ in $N$, and every point in $X$ has the same number of branches in $M^*$ as in 
$N$. Therefore every point $x$ of $X$ has  $1$, $2$, or $a$-many branches in $M^*$, and 
thus the number of branches at $x$ is different from the number of branches at $qy$.  From 
this we conclude that for any $\M_1,\M_2 \in \mathcal{K}$, any homeomorphism of $M_1^*$ 
onto $M_2^*$ that takes base point to base point must map the scaled version of $M_1$ 
onto the scaled version of $M_2$.  Since this can only happen when $\M_1 = \M_2$, by 
assumption on $\mathcal{K}$, we conclude that $\M_1^* = \M_2^*$, as desired.
\end{proof}

\begin{theorem}
Let $\kappa$ be an uncountable cardinal. The number of non-homeomorphic models of 
$\rbRT_r$ of density character equal to $\kappa$ is $2^\kappa$.
\end{theorem}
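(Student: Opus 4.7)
The proof will proceed by transfinite induction on the uncountable cardinal $\kappa$, repeatedly invoking Lemma \ref{getting many nonseparable models}. That lemma reduces the task to verifying its condition (1) at each step: once we exhibit at least $\kappa$ pairwise non-homeomorphic models of $\rbRT_r$ of density character $\leq \kappa$, the lemma automatically upgrades this to $2^\kappa$ pairwise non-homeomorphic models of density exactly $\kappa$, which is what the theorem asserts.

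For the base case $\kappa=\omega_1$, Theorem \ref{number of separable models} supplies $2^\omega\geq\omega_1$ pairwise non-homeomorphic separable models, each of density $\omega\leq\omega_1$, so condition (1) of Lemma \ref{getting many nonseparable models} is fulfilled. In the successor step $\kappa=\lambda^+$ with $\lambda$ uncountable, the induction hypothesis applied to $\lambda$ produces $2^\lambda$ pairwise non-homeomorphic models of density character exactly $\lambda$; since $2^\lambda\geq\lambda^+=\kappa$ and $\lambda<\kappa$, condition (1) holds again and the lemma closes this case.

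For the limit step, I will use two elementary observations: that density character is a topological invariant, so two models whose density characters differ cannot be homeomorphic; and that when $\kappa$ is a limit cardinal, the class $\{\lambda : \omega\leq\lambda<\kappa\}$ is cofinal in $\kappa$. Combining these, for any $\mu<\kappa$ we may pick an infinite $\lambda$ with $\mu<\lambda<\kappa$, and the induction hypothesis (or Theorem \ref{number of separable models} if $\lambda=\omega$) supplies $2^\lambda>\mu$ pairwise non-homeomorphic models of density character $\lambda\leq\kappa$. Letting $\mu$ range over all cardinals below $\kappa$ yields at least $\kappa$ many pairwise non-homeomorphic models of density character $\leq\kappa$, and Lemma \ref{getting many nonseparable models} completes the step.

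The only non-mechanical portion is the limit case, and its only subtlety is the cofinality accounting just described; every other step is a routine invocation of Lemma \ref{getting many nonseparable models}, which has already absorbed the genuine geometric content (the amalgamation of $\kappa$ models at a common base point and the identification of the glued point via branching data). Consequently I expect no real obstacle beyond correctly bookkeeping the induction.
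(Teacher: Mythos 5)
Your proof is correct, and its skeleton is the same as the paper's: everything is funneled through Lemma \ref{getting many nonseparable models}, with the base case $\omega_1$ supplied by Theorem \ref{number of separable models} and the successor case by the induction hypothesis at the predecessor. The only genuine divergence is in the limit case. The paper argues by minimal counterexample: it introduces $\tau$, the number of homeomorphism classes of models of density character $\leq\sigma$, shows $\tau$ is uncountable and $<\sigma$ (since condition (1) of the lemma must fail at $\sigma$), and then derives a contradiction by applying minimality at $\tau$ to produce $2^\tau>\tau$ classes that should have been counted in $\tau$. You instead verify condition (1) at a limit $\kappa$ directly: for cofinally many $\lambda<\kappa$ the induction hypothesis gives $2^\lambda\geq\lambda^+$ classes of density character exactly $\lambda$, and since density character is defined purely topologically (hence a homeomorphism invariant), the families for distinct $\lambda$ can be combined without collisions, giving at least $\sup_{\lambda<\kappa}\lambda^+=\kappa$ classes of density character $<\kappa$. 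Your version is a bit more transparent and avoids the auxiliary cardinal $\tau$, at the cost of explicitly invoking the (trivial) invariance of density character, which the paper's counting argument does not need. Both arguments are sound; the difference is purely in how the limit step is bookkept.
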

\begin{proof}
We assume that $\sigma$ is the least uncountable cardinal at which there are strictly 
fewer than $2^\sigma$-many non-homeomorphic models of density character equal to 
$\sigma$, and derive a contradiction.  Using Theorem \ref{number of separable models}, 
we see that condition (1) in Theorem \ref{getting many nonseparable models} holds when 
$\kappa = \omega_1$; condition (3) in that result yields that $\rbRT_r$ has 
$2^{\omega_1}$-many non-homeomorphic models of density character equal to $\omega_1$.  Thus 
$\sigma > \omega_1$.  Now suppose $\sigma$ is a successor cardinal; say it is the next 
cardinal bigger than $\lambda$, which must be uncountable.  Our choice of $\sigma$ ensures 
that there must be $2^\lambda \geq \sigma$-many non-homeomorphic models of density 
character $\lambda$.  Applying Lemma \ref{getting many nonseparable models} with 
$\kappa = \sigma$ gives a contradiction; indeed, we have verified condition (1), while condition (3) is false.
So $\sigma$ must be a limit cardinal.  Let $\tau$ be the number of non-homeomorphic 
models of $\rbRT_r$ that have density character $\leq \sigma$; our treatment of 
$\omega_1$ shows that $\tau$ is uncountable.  Furthermore, Lemma \ref{getting many 
nonseparable models} applied to $\kappa = \sigma$ yields $\tau < \sigma$.   Our choice 
of $\sigma$ ensures that there are $2^ \tau > \tau$-many non-homeomorphic models of 
$\rbRT_r$ that have density character $\tau$, contradicting the definition of $\tau$.
\end{proof}

\section{Unbounded \texorpdfstring{$\R$}{R}-trees}

As noted in the Introduction, we have chosen to treat \emph{bounded} pointed $\R$-trees
in this paper, because many of the model-theoretic ideas and tools we need from continuous
first order logic are only documented for bounded metric structures in the literature.

However, it would certainly be a natural research topic to study the model theory of unbounded
({ie}, not necessarily bounded) pointed $\R$-trees.  The most immediately available setting
for doing this would be to consider a pointed $\R$-tree $(M,p)$ as a many-sorted metric structure
in which each sort is one of the (closed) bounded balls of $(M,p)$ (centered at $p$), and the union
of the family of distinguished balls is all of $M$.  
Everything done in this paper can easily be carried over to that setting.  The disadvantages of
doing so are the technical awkwardness of the many-sorted framework and the need for imposing
an arbitrary family of radii for the bounded balls into which the full tree is stratified.

It is certainly more mathematically natural to consider pointed $\R$-trees on their own, without
imposing a many-sorted stratification.  There are suitable
logics for doing model theory with such unbounded structures.  For example, a version of continuous first
order logic for unbounded metric structures is described in Ben Yaacov \cite{BY}.  Also, a logic based
on \emph{positive bounded formulas} and an associated concept of \emph{approximate satisfaction}
is presented in Section 6 of Due\~{n}ez and Iovino \cite{DI}.  However, for neither of these approaches 
are the ideas and tools of model theory developed as we need them in this paper.

In each of these three available settings for treating arbitrary pointed $\R$-trees, the arguments in this paper
can be used easily to demonstrate: (1) the class of pointed $\R$-trees is axiomatizable and (2) for each $r>0$, the 
ball $\{ x \mid d(x,p) \leq r \}$ is a definable set (over $\emptyset$, uniformly in all pointed $\R$-trees).  
Together with what is developed in 
Ben Yaacov, Berenstein, Henson and Usvyatsov \cite{N}
as well as in Ben Yaacov \cite{BY} and in
Due\~{n}ez and Iovino \cite{DI}, this quickly yields that the model theoretic 
frameworks for pointed $\R$-trees provided by these three settings are completely equivalent.  
In particular, this approach yields a model companion 
for the theory of pointed $\R$-trees whose models are exactly the
richly branching $\R$-trees ({ie}, the complete pointed $\R$-trees described in 
Remark \ref{general richly branching}).  Furthermore, this
model companion has suitably stated versions of all the properties of $\rbRT_r$ that are proved in this paper.

% bddRtrees23-05-biblio.tex
% bibliography

% last edited 1-23-2021 by cwh

% this is a final version for arxiv.org, implementing most of the revisions that were
% made in the JLA publication process.

% !TEX root = ./bddRtrees23-master.tex

\end{document}